\newtheorem{remark}{Remark}
\newtheorem{theorem}{Theorem}[section]
\newtheorem{corollary}{Corollary}[theorem]
\newtheorem{lemma}[theorem]{Lemma}
\newcommand{\up}{\breve{p}}
\newcommand{\un}{\breve{n}}
\newcommand{\upsi}{\breve{\psi}}
\newcommand{\utau}{\breve{\tau}}
\newcommand{\uu}{{\bf u}}
\newcommand{\bz}{{\bf z}}
\newcommand{\bm}{\boldsymbol}
\newcommand{\ep}{e_{p}}
\newcommand{\en}{e_{n}}
\newcommand{\uep}{\breve{e}_{p}}
\newcommand{\uen}{\breve{e}_{n}}
\newcommand{\uepsi}{\breve{e}_{\psi}}
\newcommand{\umu}{\breve{\mu}}
\newcommand{\unu}{\breve{\nu}}
\newcommand{\uuu}{\breve{\uu}}
\newcommand{\uphi}{\breve{\phi}}
\newcommand{\eu}{e_{\uu}}
\newcommand{\ueu}{\breve{e}_{\uu}}
\newcommand{\ueru}{\breve{e}_{R_N \uu}}
\newcommand{\uephi}{\breve{e}_{\phi}}
\newcommand{\uemu}{\breve{e}_{\mu}}
\newcommand{\uenu}{\breve{e}_{\nu}}
\newcommand{\projN}{\Pi_N}
\newcommand{\divergence}{\nabla \cdot}
\newcommand{\pt}{\partial_t}
\newcommand{\ptt}{\frac{\partial^2}{\partial t^2}}
\newcommand{\pttt}{\frac{\partial^3}{\partial t^3}}
\newcommand{\abs}[1]{\left\lvert#1\right\rvert}
\newcommand{\bigO}{\mathcal{O}}
\numberwithin{equation}{section}
\begin{document}

\title[structure preserving scheme for PNP-NS system]{A decoupled structure preserving scheme for the Poisson-Nernst-Planck Navier-Stokes equations and its error analysis}
\thanks{Cheng is supported in parts by NSFC 12301522 and the Fundamental Research Funds for the Central Universities, Shen is supported in parts by NSFC 11971407, and Wang is supported in parts by NSF 2101224. 
The main contents of this paper are part of Z. Yu's Ph.D thesis at Purdue University.}
\author[Z. Yu, J. Shen, C. Wang and Q. Cheng]{Ziyao Yu${}^\dag$, Jie Shen$^\star$, Changyou Wang$^\dag$, Qing Cheng$^\ddag$}
\thanks{${}^\dag$ Department of Mathematics, West Lafayette, IN 47907, USA (yu583@purdue.edu, wang2482@purdue.edu).\\
	$^\ddag$Key Laboratory of Intelligent Computing and Application (Tongji University), Ministry of Education,  and Department of Mathematics, Tongji University, Shanghai 200092, P. R. China (qingcheng@tongji.edu.cn). \\
	$^\star$ Eastern Institute of Technology, Ningbo, Zhejiang 315200, P. R. China (jshen@eitech.edu.cn).
	}

\begin{abstract}
We consider in this paper  numerical approximations for the  Poisson-Nernst-Planck-Navier-Stokes (PNP-NS) system. We propose  a decoupled semi-discrete and fully discrete scheme that enjoys the nice properties of positivity preserving, mass conserving, and unconditionally energy stability. Then, 
we establish the well-posedness and regularity for the initial and (periodic) boundary value problem of the PNP-NS system  under suitable assumptions on the initial data, and carry out a rigorous convergence analysis for the fully discretized scheme.    We also present some numerical results to validate the positivity preserving property and the accuracy for our decoupled numerical  scheme.

 \medskip
	\noindent{\bfseries Keywords.} Error analysis; PNP-NS system;  Unique Solvability; Structure-preserving; Positivity-preserving.
\end{abstract}


\maketitle

\section{Introduction}
In this paper, we consider a time-dependent system that describes the electrodiffusion of ions in an isothermal, incompressible, and viscous Newtonian fluid. Such a system is called the Poisson-Nernst-Planck-Navier-Stokes (PNP-NS) system \cite{gong2021partial,liu2021positivity,schmuck2011modeling}, which is widely applied in {\color{blue} fields such as microfluids which has numerous applications in lab-on-a-chip system; biology including vesicle motion,  membrane fluctuations, electroporation; and electrochemistry such as porous electrode charging, desalination dynamics, dendritic growth \cite{bazant2010induced}.  An introduction to some basic physical and mathematical descriptions can be found in \cite{rubinstein1990electro}. }  

{\color{blue}
We consider a solution of a monovalent symmetric strong salt. The Poisson-Nernst-Planck equations and incompressible Navier-Stokes Equations describe the system as
\begin{align*}
	& p_t + (\uu \cdot \nabla) p = D \divergence (\nabla p + \frac{e}{k_B T} p \nabla \Psi),\\
	& n_t + (\uu \cdot \nabla) n = D \divergence (\nabla n - \frac{e}{k_B T}  n \nabla \Psi), \\
	& -\epsilon \Delta \Psi = \rho_e, \\
	& \uu_t + (\uu \cdot \nabla) \uu - \nu_{vis} \Delta \uu + \nabla P = -\nabla \Psi \rho_e, \\
	& \divergence \uu = 0, 
\end{align*}
where $\uu$ and $P$ denote the velocity field of the fluid and the pressure function, respectively.  The variables $p$ and $n$ represent the concentration functions of positive and negative ions in the fluid, respectively,  and $\Psi$ is the electric potential.  Here $\rho_e = e(p - n)$ represents the free charge density for a monovalent symmetric salt (here ionic valence $z = \pm 1$), and $e$ is elementary charge.  $k_B$ is Boltzemann's constant, $T$ is temperature and $D$ is the diffusion coefficient of ions.  Moreover,  $\epsilon$, $\nu_{vis}$ are the dielectric permittivity and viscosity of the fluid.  Normalizing the electric potential by introducing $\psi$:
$ \psi = \frac{e}{k_B T} \Psi. $
The PNP-NS system is therefore given by
\begin{align}
	& p_t + (\uu \cdot \nabla) p = D \divergence (\nabla p + p \nabla \psi), \label{gov_equ_1}\\
	& n_t + (\uu \cdot \nabla) n = D \divergence (\nabla n - n \nabla \psi), \label{gov_equ_2}\\
	& -\varepsilon \Delta \psi = p - n, \label{gov_equ_3}\\
	& \uu_t + (\uu \cdot \nabla) \uu -\nu_{vis} \Delta \uu + \nabla P = -\kappa  \nabla \psi (p - n), \label{gov_equ_4}\\
	& \divergence \uu = 0,\label{gov_equ_5}
\end{align}  
with 
$ \varepsilon = \frac{\epsilon k_B T}{e^2} $, and $\kappa = k_B T$. It is worth to note that $\varepsilon = 2 c_{bulk} \lambda_D^2$,  where $\lambda_D$ is the Debye screening length \cite{bazant2010induced} defined by $\lambda_D = \sqrt{\frac{\epsilon k_B T}{2 c_{\text{bulk}} e^2}}$ and $c_{\text{bulk}}$ is a reference bulk concentration of ions.  
}
The system \eqref{gov_equ_1}-\eqref{gov_equ_5} is subjected to a set of initial and boundary conditions, which will be specified later.

There has been considerable interest in the mathematical analysis of the PNP-NS system. For example, Schmuck \cite{schmuck2009analysis} established the global existence of weak solutions in three dimensions under the blocking boundary condition for $(p, n)$ and the zero Neumann boundary condition for $\psi$; Gong-Wang-Zhang \cite{gong2021partial} established the existence and partial regularity of suitable weak solutions in three dimensions under the zero Neumann boundary condition for $p$, $n$, and $\psi$; Constantin-Ignatova \cite{constantin2019nernst} proved the global existence and stability result in two dimensions, with the blocking and selective boundary conditions for $(p, n)$ and the Dirichlet boundary condition for $\psi$. We emphasize that the solutions of the PNP-NS system are positive ($n, p > 0$), mass-conserving, and energy-dissipative.

In recent years, a large effort has been devoted to constructing positivity-preserving schemes for various problems in different areas \cite{MR4186541,MR4107225,MR3880256,MR4253790,liu2018positivity,van2019positivity,shen2016maximum,zhornitskaya1999positivity,chen2019positivity}. There are also quite a few numerical investigations on the PNP-NS system \eqref{gov_equ_1}-\eqref{gov_equ_5}. It was shown in \cite{flavell2014conservative} that it is important for numerical schemes to maintain mass conservation. Prohl-Schmuck proposed in \cite{prohl2010convergent} a coupled fully implicit first-order scheme with a finite-element method in space for the PNP-NS system and studied its convergence. Additionally, a first-order time-stepping method was proposed in \cite{liu2017efficient} with spectral method discretization in space. Several structure-preserving numerical methods have been proposed for the PNP equations, for example, \cite{cheng2022new2, cheng2022new, flavell2014conservative, hu2020fully, huang2021bound, liu2021positivity, prohl2009convergent, shen2021unconditionally, liu2021efficient, ding2019positivity}. {\color{blue} There are also some recent studies reformulate the PNP system into Maxwell-Ampere Nernst-Planck(MANP) system \cite{qiao2023structure}.} However, there appears to be no scheme available in the literature for the PNP-NS system \eqref{gov_equ_1}-\eqref{gov_equ_5} that enjoys the properties of unique solvability, mass- and {positivity-preserving}, and energy stability.

In this paper, we propose a decoupled, mass- and positivity-preserving, and unconditionally energy-stable scheme for the PNP-NS system and carry out a rigorous error analysis. The main contributions of this paper include:
\begin{itemize}
	\item We propose a totally decoupled, mass- and positivity-preserving, and unconditionally energy-stable scheme for the PNP-NS system by combining the following techniques:
	\begin{itemize}
		\item Rewriting the PNP system as a Wasserstein gradient flow and using the technique introduced in \cite{shen2021unconditionally} to preserve positivity and energy stability for the PNP system;
		\item Using a projection-type method \cite{guermond2006overview,guermond1998approximation,guermond1998stability} to decouple the velocity and pressure;
		\item Introducing an extra $\mathcal{O}(\Delta t)$ term as in \cite{shen2015decoupled}, which allows us to treat the convective term in the PNP equations explicitly while maintaining stability.
	\end{itemize}
	\item We derive the existence and regularity results of the PNP-NS system \eqref{gov_equ_1}-\eqref{gov_equ_5} with periodic boundary conditions under suitable assumptions on the initial data.
	
	\item To carry out an error analysis, it is necessary to have $L^{\infty}$ bounds for $n$ and $p$, which are not available through energy stability. We use an approach similar to \cite{liu2021positivity} to derive these bounds by introducing a high-order asymptotic expansion for both the PNP equations and the Navier-Stokes equations.
\end{itemize}

This paper is organized as follows: In Section 2, we construct a semi-discrete (in time) scheme, followed by a fully discrete scheme with a generic spatial discretization, and prove that it preserves mass and positivity, and is unconditionally energy stable. In Section 3, we establish the well-posedness and regularity of the PNP-NS system under periodic boundary conditions. An error analysis of the fully discretized scheme is carried out in Section 4. Some numerical results are provided in Section 5.

\section{A Decoupled Numerical Scheme and Its Properties}
Let $\Omega$ be a bounded domain in $\mathbb{R}^2$. We consider the time discretization of the PNP-NS system \eqref{gov_equ_1}-\eqref{gov_equ_5} subjected to {\color{blue} boundary condition} either  
\begin{itemize}
	\item {\color{blue} $\mathcal{B}^{block}$:} the non-slip boundary condition for $\uu$, the homogeneous Neumann boundary condition for $(\phi, \ln{p} + \psi, \ln{n}-\psi)$, i.e., all the fluxes vanish on the boundary of $\Omega$:
	\begin{equation}\label{BC}
	\uu|_{\partial \Omega}=0,\;\; \nabla\phi \cdot \vec{\nu} \big|_{\partial \Omega}=	(\nabla p + p \nabla \psi) \cdot \vec{\nu} \big|_{\partial \Omega}=
		(\nabla n - n \nabla \psi) \cdot \vec{\nu} \big| _{\partial \Omega} = 0,
	\end{equation}
	\item {\color{blue} $\mathcal{B}^{periodic}$:} the periodic boundary conditions for all variables, 
\end{itemize}
along with the initial condition:
\begin{align}\label{ic}
	(\uu, p, n)(x,y,0)=(\uu^{\text{in}}, p^{\text{in}}, n^{\text{in}}) (x,y), \quad \text{for} \ (x,y)\in \Omega.
\end{align}
For either \eqref{BC} or the periodic boundary conditions, one observes that the mass of ions is conserved, i.e.,
\begin{equation*}
	\int_{\Omega} p(x, t) \, dx = \int_{\Omega} p(x, 0) \, dx, \quad \int_{\Omega} n(x, t) \, dx = \int_{\Omega} n(x, 0) \, dx, \quad \forall t \in [0, T].
\end{equation*}
Another essential property of the PNP-NS system \eqref{gov_equ_1}-\eqref{gov_equ_5} is the following energy dissipation law:
\begin{equation}\label{law}
	\frac{d}{dt}E(p, n, \uu) = -\int_{\Omega} \left( {\color{blue} \nu_{vis}}  \rvert \nabla \uu \rvert^2 + {\color{blue} \kappa D} p \rvert \nabla \mu \rvert^2 +  {\color{blue} \kappa D } n \rvert \nabla \nu \rvert^2 \right) dx,
\end{equation}
where $\mu = \ln{p} + \psi$ and $\nu = \ln{n} - \psi$ are chemical potentials of the PNP-NS system, and $E$ is the total energy given by 
\begin{equation*}
	E(p, n, \uu) = \int_{\Omega} {\color{blue} \kappa} \left( p (\ln{p} - 1) + n (\ln{n} - 1) + \frac{\varepsilon}{2} \rvert \nabla \psi \rvert^2 \right) + \frac{1}{2} \rvert \uu \rvert^2  dx.
\end{equation*}

\subsection{Time Discretization}
We first consider the time discretization.  {\color{blue} For simplicity,  we set various constants $D = \varepsilon = \kappa = \nu_{vis} = 1$ for the rest analysis. }In order to construct an efficient time discretization scheme, we first rewrite the right-hand side of equation \eqref{gov_equ_4} as
\begin{equation*}
	-\nabla \psi(p-n) = -(p\nabla \mu + n \nabla \nu) + \nabla(p + n),
\end{equation*}
and introduce a modified pressure $\phi = P - p - n$. Then, the PNP-NS system \eqref{gov_equ_1}-\eqref{gov_equ_5} can be reformulated as 
\begin{align}
	& p_t + (\uu \cdot \nabla) p = \divergence (p\nabla \mu), \label{gov_equ_mod1}\\
	& n_t + (\uu \cdot \nabla) n = \divergence (n\nabla \nu), \label{gov_equ_mod2}\\
	& -\Delta \psi = p - n, \label{gov_equ_mod3}\\
	& \uu_t + (\uu \cdot \nabla) \uu - \Delta \uu + \nabla \phi = -(p\nabla \mu + n \nabla \nu), \label{gov_equ_mod4}\\
	& \divergence \uu = 0. \label{gov_equ_mod5}
\end{align}
{\color{blue}
Depending on boundary condition choice $\mathcal{B}$, we define the function space $X(\mathcal{B}),  U(\mathcal{B}),  W(\mathcal{B})$:
\begin{itemize}
	\item $X(\mathcal{B}^{block}) = X(\mathcal{B}^{periodic}) = H^{1}(\Omega)$,
	\item $U(\mathcal{B}^{block}) = U(\mathcal{B}^{periodic}) =  \{q \in L^2(\Omega) : \int_\Omega q \, dx = 0\} $,
	\item $
				W(\mathcal{B}) = \big\{ \begin{array}{l}
															H^{1}_{0}(\Omega	),  \text{ if } \mathcal{B} = \mathcal{B}^{block}, \\
															H^{1}(\Omega),  \text{ if } \mathcal{B} = \mathcal{B}^{periodic}.
														\end{array}
			$
\end{itemize}
}

Following some of the ideas in \cite{shen2021unconditionally, liu2021positivity, shen2015decoupled}, we construct a first-order time discretization scheme as follows: {\color{blue} under boundary condition $\mathcal{B}$ being either $\mathcal{B}^{block}$ or $\mathcal{B}^{periodic}$,} for any given $(p^{m}, n^{m}, \uu^{m}, \phi^{m})$ with $\int_{\Omega} (p^{m} - n^{m}) \, dx = 0$, $(p^{m}, n^{m}) > 0$ and $\divergence \uu^{m} = 0$ in $\Omega$, we compute {\color{blue} $(p^{m+1}, n^{m+1}, \uu^{m+1}, \phi^{m+1})$ } in three steps:
\begin{itemize}
	\item Step 1: Solve ${\color{blue} (p^{m+1}, n^{m+1})   \in X(\mathcal{B}) \times X(\mathcal{B})}$  from
	\begin{align}
		& \frac{p^{m+1} - p^{m}}{\Delta t} + \divergence(p^{m} \uu^{m}) = \divergence(p^{m}(1 + 2\Delta t p^{m}) \nabla \mu^{m+1}), \label{time_dist_mod1} \\ 
		& \frac{n^{m+1} - n^{m}}{\Delta t} + \divergence(n^{m} \uu^{m}) = \divergence(n^{m}(1 + 2\Delta t n^{m}) \nabla \nu^{m+1}), \label{time_dist_mod2}\\
&	-\Delta \psi^{m+1} = p^{m+1} - n^{m+1}. \label{time_dist_mod3}
\end{align}
	where 
	\begin{equation*}
		\mu^{m+1} = \ln{p^{m+1}} + \psi^{m+1} \quad {\text{and}} \quad \nu^{m+1} = \ln{n^{m+1}} - \psi^{m+1}.
	\end{equation*}
	\item Step 2: Solve ${\color{blue} \tilde{\uu}^{m+1}  \in W(\mathcal{B})^2 }$ from 
	{\color{blue}
			\begin{equation}
					\frac{\tilde{\uu}^{m+1} - \uu^{m}}{\Delta t} + (\uu^{m} \cdot \nabla) \tilde{\uu}^{m+1} - \Delta \tilde{\uu}^{m+1} + \nabla \phi^{m} = -\left(p^{m} \nabla \mu^{m+1} + n^{m} \nabla \nu^{m+1}\right),\label{time_dist_mod4} 			
			\end{equation}
	}
	\item  Step 3: Solve ${\color{blue} (\uu^{m+1}, \phi^{m+1}) \in W(\mathcal{B})^2 \times U(\mathcal{B})  }$ from
	{\color{blue}
	\begin{align}
		& \frac{\uu^{m+1} - \tilde{\uu}^{m+1}}{\Delta t} + \nabla(\phi^{m+1} - \phi^{m}) = 0, \label{time_dist_mod5} \\
		& \divergence \uu^{m+1} = 0.		\label{time_dist_mod6}
	\end{align}}
\end{itemize}
The first step involves solving a coupled nonlinear system for $(p^{m+1}, n^{m+1},\psi^{m+1})$ which can be formulated as a minimization problem for a convex functional, see \cite{shen2021unconditionally} and also Theorem \ref{thm:four_property}. The second step solves a Poisson-type equation for $\tilde{\uu}^{m+1}$. 
And the third step is equivalent to solving
	\begin{align}
\Delta(\phi^{m+1} - \phi^{m}) =  \frac{1}{\Delta t} \divergence \tilde{\uu}^{m+1}, \label{time_dist_mod5b} 
	\end{align}
	along with either $(\phi^{m+1} - \phi^{m}) \cdot \vec{\nu} \big|_{\partial \Omega} = 0$ or the periodic boundary condition, and
	\begin{align}
{\color{blue} \uu^{m+1}} = \tilde{\uu}^{m+1} - \Delta t \nabla(\phi^{m+1} - \phi^{m}). \label{time_dist_modb6}
\end{align}
Thus, the scheme \eqref{time_dist_mod1}-\eqref{time_dist_mod6} can be efficiently implemented.


\begin{remark}
	{\color{blue} In \eqref{time_dist_mod1} \eqref{time_dist_mod2}, we discretized the mobility term as $p^{m}(1 + 2\Delta t p^{m})$, $n^{m}(1 + 2\Delta t n^{m})$,  moving the $\mathcal{O}(\Delta t)$ terms to the left,  therefore the} first step can be rewritten as:  
	\begin{align*}
		& \frac{p^{m+1} - p^{m}}{\Delta t} + \divergence(p^{m} \uu^{m}_{*,p}) = \divergence(p^{m} \nabla \mu^{m+1}),  \\ 
		& \frac{n^{m+1} - n^{m}}{\Delta t} + \divergence(n^{m} \uu^{m}_{*,n}) = \divergence(n^{m} \nabla \nu^{m+1}), 
	\end{align*}
	where 
	\begin{align*}
		\uu^{m}_{*,p} = \uu^{m} - 2 \Delta t p^{m} \nabla \mu^{m+1}, \\
		\uu^{m}_{*,n} = \uu^{m} - 2 \Delta t n^{m} \nabla \nu^{m+1}.
	\end{align*}
	This is similar to the decoupling technique introduced by \cite{shen2015decoupled}, {\color{blue} where specific additional $\mathcal{O}(\Delta t)$ terms are introduced such that the decoupled discrete numerical scheme is unconditionally energy stable,  see Theorem \ref{thm:four_property} below.}
\end{remark}

\subsection{Fully Discretized Scheme}
In this subsection, we shall consider a generic spatial discretization for \eqref{time_dist_mod1}-\eqref{time_dist_mod6}. Let $\Sigma_N$ be a set of mesh points or collocation points in $\bar{\Omega}$. Note that $\Sigma_N$ should not include the points on the part of the boundary where a Dirichlet (or essential) boundary condition is prescribed, while it should include the points on the part of the boundary where a Neumann or mixed (or non-essential) boundary condition is prescribed.
    
We consider a Galerkin-type discretization with finite elements, spectral methods, or finite differences with summation-by-parts in a subspace $X_N \subset X$, and define a discrete inner product, i.e., numerical integration, on $\Sigma_N = \{\bm{z}\}$ in $\bar{\Omega}$:
{\color{blue}
\begin{equation}\label{numint}
  \langle u_N, v_N \rangle_{N, \omega} = \sum_{\bm{z} \in \Sigma_N} \omega_{\bm{z}} u_N(\bm{z}) v_N(\bm{z}), 
\end{equation}}
where we require that the weights $\omega_{\bm{z}} > 0$. We also denote the induced norm by {\color{blue}$\|u_N\| = \langle u_N, u_N \rangle_{N, \omega}^{\frac{1}{2}}$}. For finite element methods, the sum should be understood as $\sum_{K \subset \mathcal{T}} \sum_{\bm{z} \in Z(K)}$, where $\mathcal{T}$ is a given triangulation.  
We assume that there is a unique function $\psi_{\bm{z}}(\bm{x})$ satisfying $\psi_{\bm{z}}(\bm{z}') = \delta_{\bm{z}\bm{z}'}$ for $\bm{z}, \bm{z}' \in \Sigma_N$.  {\color{blue}
Under boundary condition $\mathcal{B}$, let $X_N$, ${W}_N$, and ${U}_N$ be suitable {\color{blue} discretization} subspaces of $X(\mathcal{B})$, $W(\mathcal{B})$, and $U(\mathcal{B})$, respectively.}

{\color{blue} To fix the idea and without loss of generality,  throughout the rest of the paper,  reader can think we are discussing under spectral method discretization framework,  and $X_N, W_N,  U_N$ are subspaces of $P_N$, where
$$
	 P_N := \big\{ \begin{array}{l}
	 						span\{e^{\mathrm{i} j x} e^{\mathrm{i} k y}: -\frac{N}{2} \leq k \leq \frac{N}{2} - 1 \},  \text{ if } \mathcal{B} = \mathcal{B}^{periodic}, \\
	 						span\{1,  x,  y,  xy, ...,    x^{N}y^{N} \}, \text{ if } \mathcal{B} = \mathcal{B}^{block}.
	 					\end{array}
$$
Under spectral method framework, the quadrature error is very small when $N$ is large enough,  and avoidable in numerical implementation by choosing quadrature points numbers $N_Q$ bigger than basis numbers $N$.  For simplicity, throughout the rest of the paper,  we ignore the quadrature error, and do not distinguish the continuous inner product $\langle u_N,  v_N \rangle$ and discrete inner product $\langle u_N,  v_N \rangle_{N, \omega}$.
}

Then, a fully discretized version of \eqref{time_dist_mod1}-\eqref{time_dist_mod6} for the PNP-NS system \eqref{gov_equ_mod1}-\eqref{gov_equ_mod5} is as follows:
    
Given $(p_N^m, n_N^m, \uu_N^{m}, \phi_N^m) \in X_N \times X_N \times {{W}_N^2} \times {{U}_N}$, with $p_N^m, n_N^m > 0$ in $\Omega$, $\displaystyle \langle p_N^m - n^m_N, 1 \rangle = 0$, and $\nabla \cdot \mathbf{u}_N^m = 0$ in $\Omega$, we proceed as follows:
\begin{itemize}
	\item Step 1: Solve $(p^{m+1}_N, n^{m+1}_N) \in X_N \times X_N$ from
		\begin{align}
			& \langle \frac{p^{m+1}_N - p^{m}_N}{\Delta t}, v_N \rangle - \langle p^{m}_N \uu^{m}_N, \nabla v_N \rangle + \langle p^{m}_N (1 + 2\Delta t p^{m}_N) \nabla \mu^{m+1}_N, \nabla v_N \rangle = 0, \;\forall v_N \in X_N, \vspace{0.5em} \label{scheme:p1} \\
			& \langle \frac{n^{m+1}_N - n^{m}_N}{\Delta t}, v_N \rangle - \langle n^{m}_N \uu^{m}_N, \nabla v_N \rangle + \langle n^{m}_N (1 + 2\Delta t n^{m}_N) \nabla \nu^{m+1}_N, \nabla v_N \rangle = 0, \;\forall v_N \in X_N, \vspace{0.5em} \label{scheme:n1} \\ 
			& \langle \nabla \psi^{m+1}_N, \nabla v_N \rangle = \langle p^{m+1}_N - n^{m+1}_N, v_N \rangle, \;\forall v_N \in X_N, \vspace{0.5em} \label{scheme:psi}
		\end{align} 
		where 
		\begin{equation} 
			\mu^{m+1}_N = \ln{p^{m+1}_N} + \psi^{m+1}_N, \quad
			\nu^{m+1}_N = \ln{n^{m+1}_N} - \psi^{m+1}_N.
		 \end{equation}
		
	\item Step 2: Solve $\tilde{\uu}^{m+1}_N \in {{W}_N^2}$ from 
			\begin{align}
				& \langle \frac{\tilde{\uu}^{m+1}_N - \uu^{m}_N}{\Delta t}, {{w_N}} \rangle + \langle (\uu^{m}_N \cdot \nabla) \tilde{\uu}^{m+1}_N, {{w_N}} \rangle + \langle \nabla \tilde{\uu}^{m+1}_N, \nabla {{w_N}} \rangle + \langle \nabla \phi^{m}_N, w_N \rangle, \nonumber  \\
				& \qquad + \langle p^{m}_N \nabla \mu^{m+1}_N + n^{m}_N \nabla \nu^{m+1}_N, {{w_N}} \rangle = 0, \;\forall w_N \in {{W}_N^2}, \vspace{0.5em} \label{scheme:u1}  
				\end{align}
	\item Step 3: Solve $(\uu^{m+1}_N, \phi^{m+1}_N) \in {{W}_N^2} \times {{U}_N}$ from
		\begin{align}
			& \langle \frac{\uu^{m+1}_N - \tilde{\uu}^{m+1}_N}{\Delta t}, v_N \rangle + \langle \nabla (\phi^{m+1}_N - \phi^{m}_N), v_N \rangle = 0, \; v_N \in X_N^2, \label{scheme:u2} \\
			& {{\langle \uu^{m+1}_N, \nabla q_N \rangle}} = 0, \; q_N \in {{U}_N}. \label{scheme:u3}
		\end{align}
\end{itemize}

We shall show below that the nonlinear system \eqref{scheme:p1}-\eqref{scheme:psi} in Step 1 can be interpreted as a minimization of a convex functional. In Step 2, we only need to solve a Poisson-type equation for $\tilde{\uu}^{m+1}_N$, and Step 3 is a discrete Darcy system which can be reduced to a discrete Poisson equation for $\phi^{m+1}_N - \phi^{m}_N$. Hence, the above scheme can be efficiently solved.

\subsection{Properties of the Numerical Scheme}
\label{ch-PNPNS-sec-property}

We show below that our decoupled numerical scheme \eqref{scheme:p1}-\eqref{scheme:u3} enjoys four properties: mass conservation, unique solvability, positivity-preserving, and unconditional energy stability.

Before proceeding to the proof, for any discrete positive function $\mathcal{M}(\bz) > 0$ for all $\bz \in \Sigma_N$, we introduce the operator $\mathcal{L}_{\mathcal{M}}: X_N \rightarrow X_N$ defined by
\begin{equation}
    \langle \mathcal{L}_{\mathcal{M}} f_N, v_N \rangle = \langle \mathcal{M} \nabla f_N, \nabla v_N \rangle, \quad \forall f_N, v_N \in X_N. \label{aux_operator}
\end{equation}
The operator $\mathcal{L}_{\mathcal{M}}$ is invertible on the space $\dot{X}_N = \big\{ f \in X_N \mid \langle f, 1 \rangle = 0 \big\}$, so we can define the inverse operator $\mathcal{L}^{-1}_{\mathcal{M}}: X_N \rightarrow \dot{X}_N$ and the induced norm
\begin{equation*}
    \| f_N \|_{\mathcal{L}^{-1}_{\mathcal{M}}} = \sqrt{\langle f_N, \mathcal{L}_{\mathcal{M}}^{-1} f_N \rangle}, \quad \forall f_N \in X_N.
\end{equation*}
If $\mathcal{M}(\bz) \equiv 1$ for all $\bz \in \Sigma_N$, then we have 
\begin{equation*}
    \mathcal{L}_{\mathcal{M}}(f_N) = -\Delta f_N \quad \text{and} \quad
    \| f_N \|_{-1,\Omega} = \sqrt{\langle f_N, (-\Delta)^{-1} f_N \rangle}, \quad \forall f_N \in X_N. 
\end{equation*}

\begin{lemma}\label{aux:lem:2}
    Suppose $f_N \in X_N$ and $\mathcal{M} \geq M_0$, then we have the estimate:
    \begin{equation*}
        \| \mathcal{L}^{-1}_{\mathcal{M}} f_N \|_{\infty} \leq \frac{C {\color{blue} N} }{M_0} \| f_N \|,
    \end{equation*}
    where $C$ depends only on $\Omega$. 
    \begin{proof}
       {Denote $u_N = \mathcal{L}^{-1}_{\mathcal{M}} f_N \in {\color{blue} \dot{X}_N}$. From \eqref{aux_operator} and using the Poincar\'e-Wirtinger inequality, we have}
        \begin{equation*}
            M_0 \| \nabla u_N \|^2 \leq \langle \mathcal{M} \nabla u_N, \nabla u_N \rangle = \langle f_N, u_N \rangle \leq \| f_N \| \| u_N \| \leq C \| f_N \| \| \nabla u_N \|,
        \end{equation*}
        and applying the Nikolskii's inequality, we have {\color{blue}
        \begin{equation*}
            \| u_N \|_{\infty} \leq C(\Omega) N \| u \|_2  \leq  C(\Omega) N \| \nabla u_N \| \leq \frac{C N }{M_0} \| f_N \|,
        \end{equation*} }
        where $C$ depends only on $\Omega$. 
    \end{proof}
\end{lemma}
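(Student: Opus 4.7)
The plan is to combine a standard energy estimate for the discrete elliptic problem $\mathcal{L}_{\mathcal{M}} u_N = f_N$ with an inverse (Nikolskii-type) inequality that converts an $L^2$ bound into an $L^\infty$ bound at the cost of a factor $N$. Set $u_N := \mathcal{L}^{-1}_{\mathcal{M}} f_N$, which by construction of the inverse operator lies in $\dot{X}_N$, i.e., has zero discrete mean. The defining relation $\langle \mathcal{M} \nabla u_N, \nabla v_N \rangle = \langle f_N, v_N \rangle$ then holds for all test functions $v_N \in X_N$.

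The first step is to take $v_N = u_N$ and exploit the pointwise lower bound $\mathcal{M} \ge M_0$ to get
\begin{equation*}
  M_0 \| \nabla u_N \|^2 \le \langle \mathcal{M} \nabla u_N, \nabla u_N \rangle = \langle f_N, u_N \rangle \le \| f_N \| \, \| u_N \|.
\end{equation*}
Since $u_N$ has mean zero, the Poincaré--Wirtinger inequality gives $\| u_N \| \le C(\Omega) \| \nabla u_N \|$, so after dividing once by $\| \nabla u_N \|$ I obtain $\| \nabla u_N \| \le \frac{C}{M_0} \| f_N \|$, and consequently $\| u_N \| \le \frac{C}{M_0} \| f_N \|$.

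The second step is the jump from an $L^2$ estimate to the claimed $L^\infty$ estimate. Since $u_N$ lives in the finite-dimensional space $X_N \subset P_N$, an inverse inequality of Nikolskii type applies: $\| u_N \|_\infty \le C(\Omega) N \| u_N \|_2$ (for the spectral setting used throughout the paper; an analogous inequality holds in the other cases, with $N$ playing the role of $h^{-1}$ up to logarithms). Chaining this with the previous bound yields the required $\| u_N \|_\infty \le \frac{CN}{M_0} \| f_N \|$.

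The only genuine issue is the inverse inequality in the last step; this is the essential source of the factor $N$ and is the reason the bound must be formulated in the fully discrete setting rather than as an a priori estimate on the continuous operator. Everything else is a routine energy argument. Note that the quadrature-free convention introduced earlier in the paper lets me identify $\langle \cdot, \cdot \rangle_{N,\omega}$ with the continuous $L^2$ inner product, so Poincaré--Wirtinger and Nikolskii may be invoked directly without worrying about equivalence of discrete and continuous norms.
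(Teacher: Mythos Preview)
Your proposal is correct and follows essentially the same approach as the paper's proof: an energy estimate on $\mathcal{L}_{\mathcal{M}} u_N = f_N$ combined with Poincar\'e--Wirtinger (using $u_N \in \dot{X}_N$) to bound $\|\nabla u_N\|$ and $\|u_N\|$, followed by the Nikolskii inverse inequality $\|u_N\|_\infty \le C N \|u_N\|$ to pick up the factor $N$. Your added remarks on the role of the inverse inequality and the quadrature-free convention are accurate and slightly more explicit than the paper's version.
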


\begin{theorem}
    \label{thm:four_property}
    Given $(p^{m}_N, n^{m}_N, \uu^{m}_N, \phi^{m}_N) \in X_N \times X_N \times {W}_N^2 \times {U}_N$,
    with $p_N^m(\bz), n_N^m(\bz) > 0$ for all $\bz \in \Sigma_N$, $\displaystyle \langle p_N^m - n^m_N , 1 \rangle = 0$, and $\nabla \cdot \mathbf{u}_N^m = 0$ in $\Omega$, then the scheme \eqref{scheme:p1}-\eqref{scheme:u3} enjoys the following properties:
    \begin{enumerate}
    	\item \textbf{Mass Conservation:}
    		\begin{equation*}
    			\langle p^{m+1}_N, 1 \rangle = \langle p^{m}_N, 1 \rangle, \quad 
    			\langle n^{m+1}_N, 1 \rangle = \langle n^{m}_N, 1 \rangle.
    		\end{equation*} 
    	
    	\item \textbf{Unique Solvability:} The scheme \eqref{scheme:p1}-\eqref{scheme:u2} has a unique solution 
    	\begin{equation*}
    		(p^{m+1}_N, n^{m+1}_N, \uu^{m+1}_N, \phi^{m+1}_N) \in X_N \times X_N \times {W}_N^2 \times {U}_N.
    	\end{equation*}
    	
    	\item \textbf{Positivity Preserving:} The unique solution $(p^{m+1}_N, n^{m+1}_N, \uu^{m+1}_N, \phi^{m+1}_{N})$
    	satisfies 
    	\begin{equation*}
    		p^{m+1}_N(\bz), \, n^{m+1}_N(\bz) > 0, \quad \forall \bz \in \Sigma_N.
    	\end{equation*}
    	
    	\item \textbf{Unconditional Energy Stability:} 
    		\begin{equation*}
    			\begin{split}
    				& \quad \frac{1}{\Delta t}\bigg( 
    				\big(E(p^{m+1}_N) + E(n^{m+1}_N) + \frac{1}{2}\| \nabla \psi^{m+1}_N \|^2 
    				+ \frac{1}{2}\| \uu^{m+1}_N \|^2 + \frac{\Delta t^2}{2} \| \nabla \phi^{m+1}_N \|^2 \big) \\
    				& \quad-
    				\big(E(p^{m}_N) + E(n^{m}_N) + \frac{1}{2}\| \nabla \psi^{m}_N \|^2
    					+ \frac{1}{2}\| \uu^{m}_N \|^2 + \frac{\Delta t^2}{2} \| \nabla \phi^{m}_N \|^2 \big)
    				\bigg) \\
    				& \quad + \| \nabla \tilde{\uu}^{m+1}_N \|^2 
    				  + \langle p^{m}_N | \nabla \mu^{m+1}_N |^2, 1 \rangle
    				  + \langle n^{m}_N | \nabla \nu^{m+1}_N |^2, 1 \rangle \\
    				& \quad + \frac{1}{2\Delta t} \| \nabla (\psi^{m+1}_N - \psi^{m}_N) \|^2 
    					+ \frac{1}{2\Delta t} \| \uu^{m+1}_N - \tilde{\uu}^{m+1}_N \|^2 \\
    				& \quad + \frac{1}{2\Delta t} \| \tilde{\uu}^{m+1}_N - \uu^{m}_N \|^2
    					+ \frac{\Delta t}{2} \| \nabla(\phi^{m+1}_N - \phi^{m}_N) \|^2 \\
    				& \leq -\| \nabla \tilde{\uu}^{m+1}_N \|^2 
    				 -\langle p^{m}_N | \nabla \mu^{m+1}_N |^2, 1 \rangle
    				 -\langle n^{m}_N | \nabla \nu^{m+1}_N |^2, 1 \rangle.
    			\end{split}
    		\end{equation*}
    		where the energy is defined by 
    		\begin{equation*}
    			E(v_N) = \langle v_N(\ln{v_N} - 1), 1 \rangle,
    		\end{equation*}
    		for any function $v_N \in X_N$.
    \end{enumerate}
\end{theorem}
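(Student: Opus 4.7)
The plan is to prove the four properties in turn, with unique solvability and positivity obtained together from a convex minimization formulation of Step 1. Mass conservation is immediate: taking $v_N\equiv 1$ in \eqref{scheme:p1} and \eqref{scheme:n1} annihilates the convective and mobility contributions (since $\nabla v_N=0$) and leaves $\langle p_N^{m+1}-p_N^m,1\rangle=0$ and the analogous identity for $n$.

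For unique solvability and positivity of Step 1, I would follow the convex-minimization strategy of \cite{shen2021unconditionally}. Let $\mathcal{M}_p=p_N^m(1+2\Delta t\, p_N^m)$, $\mathcal{M}_n=n_N^m(1+2\Delta t\, n_N^m)$, $\tilde p^m=p_N^m-\Delta t\,\nabla\cdot(p_N^m\uu_N^m)$, and $\tilde n^m$ analogously; on the admissible set
\[
\mathcal{A}=\big\{(p,n)\in X_N\times X_N:\ p(\bz),n(\bz)>0\ \forall\,\bz\in\Sigma_N,\ \langle p,1\rangle=\langle p_N^m,1\rangle,\ \langle n,1\rangle=\langle n_N^m,1\rangle\big\}
\]
define the functional
\[
J(p,n)=\langle p(\ln p-1),1\rangle+\langle n(\ln n-1),1\rangle+\tfrac12\|p-n\|_{-1,\Omega}^2+\tfrac{1}{2\Delta t}\|p-\tilde p^m\|_{\mathcal{L}_{\mathcal{M}_p}^{-1}}^2+\tfrac{1}{2\Delta t}\|n-\tilde n^m\|_{\mathcal{L}_{\mathcal{M}_n}^{-1}}^2.
\]
$J$ is strictly convex, hence it admits at most one minimizer. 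The crucial step is to rule out minimizers on $\{p=0\}\cup\{n=0\}$: if $p^*(\bz_0)=0$ at some collocation point, the one-sided directional derivative of $J$ along a mass-preserving perturbation supported at $\bz_0$ equals $-\infty$ because of the $p\ln p$ term, producing descent and contradicting optimality; hence the unique minimizer is strictly positive on $\Sigma_N$. The Euler--Lagrange equations with Lagrange multipliers for the two mass constraints reproduce \eqref{scheme:p1}--\eqref{scheme:psi} (the multipliers are killed by $\mathcal{L}_{\mathcal{M}}$). Once $\mu_N^{m+1},\nu_N^{m+1}$ are fixed, Step 2 is a linear problem for $\tilde{\uu}_N^{m+1}$ whose bilinear form is coercive since $\langle(\uu_N^m\cdot\nabla)\tilde{\uu},\tilde{\uu}\rangle=0$ by $\nabla\cdot\uu_N^m=0$, and Step 3 is a standard discrete projection, reducing to a Poisson problem for $\phi_N^{m+1}-\phi_N^m$ in $U_N$ followed by an explicit update of $\uu_N^{m+1}$.

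For energy stability I test \eqref{scheme:p1}, \eqref{scheme:n1}, \eqref{scheme:u1}, and \eqref{scheme:u2} with $\Delta t\,\mu_N^{m+1}$, $\Delta t\,\nu_N^{m+1}$, $\Delta t\,\tilde{\uu}_N^{m+1}$, and $\Delta t\,\uu_N^{m+1}$ respectively, and sum the results. Convexity of $s\mapsto s(\ln s-1)$ yields $E(p_N^{m+1})-E(p_N^m)\le\langle\ln p_N^{m+1},p_N^{m+1}-p_N^m\rangle$ and similarly for $n$. The $\psi$ contributions combine via \eqref{scheme:psi}: differencing it between time steps and testing with $\psi_N^{m+1}$, the term $\langle(p_N^{m+1}-p_N^m)-(n_N^{m+1}-n_N^m),\psi_N^{m+1}\rangle$ splits as $\tfrac12(\|\nabla\psi_N^{m+1}\|^2-\|\nabla\psi_N^m\|^2)+\tfrac12\|\nabla(\psi_N^{m+1}-\psi_N^m)\|^2$. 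The identity $\langle a-b,a\rangle=\tfrac12(\|a\|^2-\|b\|^2+\|a-b\|^2)$ supplies the kinetic energy increment and the artificial dissipation terms; the convective term in Step 2 vanishes under $\nabla\cdot\uu_N^m=0$; testing Step 3 with $\uu_N^{m+1}$ yields the Pythagorean split $\tfrac12\|\tilde{\uu}_N^{m+1}\|^2=\tfrac12\|\uu_N^{m+1}\|^2+\tfrac12\|\uu_N^{m+1}-\tilde{\uu}_N^{m+1}\|^2$ via \eqref{scheme:u3}; and writing $\tilde{\uu}_N^{m+1}=\uu_N^{m+1}+\Delta t\,\nabla(\phi_N^{m+1}-\phi_N^m)$ processes the cross term $\Delta t\langle\nabla\phi_N^m,\tilde{\uu}_N^{m+1}\rangle$ into the $\phi$-energy increment plus the $\tfrac{\Delta t^2}{2}\|\nabla(\phi_N^{m+1}-\phi_N^m)\|^2$ contribution.

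The main obstacle is the remainder produced by the explicit PNP convection together with the implicit NS forcing: the sum
\[
-\Delta t\langle p_N^m\uu_N^m,\nabla\mu_N^{m+1}\rangle+\Delta t\langle p_N^m\nabla\mu_N^{m+1},\tilde{\uu}_N^{m+1}\rangle=\Delta t\langle p_N^m\nabla\mu_N^{m+1},\tilde{\uu}_N^{m+1}-\uu_N^m\rangle
\]
(and its $n$-analogue) has no definite sign, which is precisely the role of the $\mathcal{O}(\Delta t)$ augmentation $p_N^m(1+2\Delta t\,p_N^m)$ of the mobility. Cauchy--Schwarz and Young's inequality yield
\[
\Delta t\langle p_N^m\nabla\mu_N^{m+1},\tilde{\uu}_N^{m+1}-\uu_N^m\rangle\leq 2\Delta t^2\langle(p_N^m)^2|\nabla\mu_N^{m+1}|^2,1\rangle+\tfrac18\|\tilde{\uu}_N^{m+1}-\uu_N^m\|^2;
\]
the first piece is absorbed exactly by the excess dissipation $2\Delta t^2\langle(p_N^m)^2|\nabla\mu_N^{m+1}|^2,1\rangle$ generated by the $2\Delta t\, p_N^m$ correction in the mobility, while the second is dominated by the $\tfrac12\|\tilde{\uu}_N^{m+1}-\uu_N^m\|^2$ term coming from the velocity identity; the same bound applies for $n$. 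Collecting all the contributions yields the claimed inequality.
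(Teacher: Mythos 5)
Your proposal is correct in substance and follows the same overall architecture as the paper: mass conservation by testing with $v_N\equiv 1$, unique solvability and positivity via strict convexity of an entropy-regularized functional whose logarithmic barrier forbids boundary minimizers, and energy stability by testing with $\mu_N^{m+1},\nu_N^{m+1},\tilde{\uu}_N^{m+1},\uu_N^{m+1}$. The one genuinely different step is your treatment of the indefinite cross term $\Delta t\langle p_N^m\nabla\mu_N^{m+1},\tilde{\uu}_N^{m+1}-\uu_N^m\rangle$: you estimate it by Cauchy--Schwarz and Young, absorbing one piece into the excess dissipation $2\Delta t^2\langle(p_N^m)^2|\nabla\mu_N^{m+1}|^2,1\rangle$ generated by the augmented mobility and the other into $\tfrac12\|\tilde{\uu}_N^{m+1}-\uu_N^m\|^2$, whereas the paper introduces the auxiliary velocities $\uu^m_{*,p}=\uu_N^m-2\Delta t\,p_N^m\nabla\mu_N^{m+1}$ (and the $n$-analogue) and completes the square exactly, turning the cross term into $\frac{1}{4\Delta t}\bigl(\|\tilde{\uu}_N^{m+1}-\uu_N^m\|^2-\|\uu_N^m-\uu^m_{*,p}\|^2-\|\uu^m_{*,p}-\tilde{\uu}_N^{m+1}\|^2\bigr)$ with no inequality and no loss. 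Both routes close the estimate and both correctly identify the $\mathcal{O}(\Delta t)$ mobility augmentation as the mechanism making the explicit convection harmless; your version merely trades the exact identity for a constant-factor loss on the $\|\tilde{\uu}_N^{m+1}-\uu_N^m\|^2$ bookkeeping, which is immaterial for unconditional stability. Two points of informality worth tightening: (i) a mass-preserving perturbation cannot be supported at a single collocation point, so the ``derivative equals $-\infty$'' argument must be run along $\psi_N=\phi^N_{(x_0,y_0)}-\phi^N_{(x_1,y_1)}$ with $p^*_N(x_1,y_1)\ge\beta_0$, and existence of a minimizer requires the compactness device (the paper's $Y_{N,\delta}$ with the quantitative bounds of Lemma \ref{aux:lem:2} on the $\mathcal{L}^{-1}_{\mathcal{M}}$ terms) before the interior-point contradiction can be drawn; (ii) your placement of the convective term inside the $\mathcal{L}^{-1}_{\mathcal{M}_p}$-norm via $\tilde p^m$ changes the Euler--Lagrange equation relative to the paper's linear term $\langle\nabla\cdot(p_N^m\uu_N^m)p^*,1\rangle$, so you should verify explicitly that your functional's critical points reproduce \eqref{scheme:p1}--\eqref{scheme:psi}.
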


\begin{proof}

\smallskip
	\begin{enumerate}
		\item \textbf{Mass Conservation:} This follows directly by choosing the test function $v_N = 1$ in the equations \eqref{scheme:p1} and \eqref{scheme:n1}. 
		
		\item \textbf{Unique Solvability and Positivity Preserving:}
				The numerical solution $\{p^{m+1}_N, n^{m+1}_N\}$ of \eqref{scheme:p1}-\eqref{scheme:psi} 
				is obtained through the minimization of the discrete energy functional:
				\begin{equation*}
					\begin{split}
						J(p^{*}_N, n^{*}_N) 
						&= \frac{1}{2\Delta t}\Big(\| p^{*}_N - p^{m}_N \|_{\mathcal{L}^{-1}_{p^{m}_N(1 + 2\Delta t p^{m}_N)}}^2 
							+ \| n^{*}_N - n^{m}_N \|_{\mathcal{L}^{-1}_{n^{m}_N(1 + 2\Delta t n^{m}_N)}}^2 \Big) \\
						&\quad + \langle \nabla \cdot (p^m_N \mathbf{u}_N^m) p^{*}_N, 1 \rangle
							+ \langle \nabla \cdot (n^m_N \mathbf{u}_N^m) n^{*}_N, 1 \rangle  \\
						&\quad + \langle p^{*}_N (\ln{p^{*}_N} - 1), 1 \rangle
							   + \langle n^{*}_N (\ln{n^{*}_N} - 1), 1 \rangle
							   + \frac{1}{2} \| p^{*}_N - n^{*}_N \|_{-1,\Omega}^2,
					\end{split}
				\end{equation*}
				over the admissible space
				\begin{equation*}
				\hat Y_N = \Big\{(p_N, n_N) \in X_N^2 \mid 0 < p_N(\bz), \, n_N(\bz) < M_N,  \forall \bz \in \Sigma_N,  \langle p_N, 1 \rangle = \langle n_N, 1 \rangle = \beta_0 |\Omega| \Big\},
				\end{equation*} 
				where 
				$$\beta_0 = \frac{1}{|\Omega|} \langle p_N^m, 1 \rangle = \frac{1}{|\Omega|} \langle n_N^m, 1 \rangle$$
				is the average of $p_N^m$ (and $n_N^m$), and 
				$$ M_N = \frac{\beta_0 |\Omega| N^3}{8\pi^3}.$$
    
			Below we show uniqueness, solvability, and positivity for scheme \eqref{scheme:p1}-\eqref{scheme:u2} by suitable modifications of \cite{liu2021positivity} and \cite{shen2021unconditionally}. 
    			 
			Firstly, we observe that every term in the functional $J(p_N, n_N)$ is strictly convex or linear with respect to the variables $(p_N, n_N)$ over the admissible space $\hat Y_N$. To show the existence of a unique minimizer of $J(p_N, n_N)$ over $\hat Y_N$,
			we proceed as follows. For a sufficiently small $0 < \delta < \beta_0$, whose value is to be determined later, we define 
			$$Y_{N,\delta} = \Big\{(p_N, n_N) \in \hat Y_N \mid \delta \le p_N(\bz), \, n_N(\bz) \le M_N - \delta, \quad \forall \bz \in \Sigma_N \Big\}.$$
			Since $Y_{N,\delta}$ is a compact subset of $\hat Y_N$, there exists a minimizer $(p^*_N, n^*_N) \in Y_{N,\delta}$ 
			of $J(p_N, n_N)$ over $Y_{N,\delta}$. Next, we need to show that $(p^*_N, n^*_N)$ lies in the interior of $Y_{N,\delta}$, 
			provided $\delta > 0$ is chosen to be sufficiently small.
			
			 Suppose the contrary that for an arbitrarily small $\delta$, the minimizer of $J(p_N, n_N)$ occurs at the boundary of $Y_{N,\delta}$, i.e., $(p^*_N, n^*_N) \in \partial Y_{N,\delta}$ for all $\delta > 0$. For simplicity, we only consider the case that there exists a point $(x_0, y_0) \in \Sigma_N$ such that $p^*_N(x_0, y_0) = \delta$ (the other case can be handled similarly). 
			Notice that there exists another point $(x_1, y_1) \neq (x_0, y_0)$ and $(x_1, y_1) \in \Sigma_N$ such that 
			$p^*_N(x_1, y_1) = \max_{{\bf x} \in \Sigma_N} p^*_N({\bf x}) \geq \beta_0$.  
			Now we can choose the test function $\psi_N$ as $\psi_N = \phi^N_{(x_0, y_0)}(x, y) - \phi^N_{(x_1, y_1)}(x, y)$, where $\phi^N_{(x_0, y_0)}(x, y)$ and $\phi^N_{(x_1, y_1)}(x, y)$ are Lagrange polynomials satisfying the following property: for all $(x, y) \in \Sigma_N$
            \begin{equation*}
                \begin{array}{cc}
                     & \phi_{N}^{(x_0, y_0)}(x, y) = \delta_{(x_0, y_0)}(x, y),\\
                     & \phi_{N}^{(x_1, y_1)}(x, y) = \delta_{(x_1, y_1)}(x, y),
                \end{array}
            \end{equation*}
            where $\delta_{(x_0,y_0)}(x, y)$ and $\delta_{(x_1, y_1)}(x, y)$ are the Kronecker delta functions. 
			Since $(p^*_N, n^*_N)$ is the minimizer and $(p^{*}_N + s \psi_N, n^{*}_N) \in Y_{N,\delta}$ for $s \geq 0$ small, we have
				$$ \frac{d}{ds}J(p^*_N + s\psi_N, n^*_N)\bigg|_{s=0} = 0.$$
				Direct computations imply
				\begin{align}\label{test:2}
				\frac{d}{ds}J(p^*_N + s\psi_N, n^*_N)\bigg|_{s=0} &= \frac{1}{\Delta t} \langle \mathcal{L}^{-1}_{p_N^m(1 + 2\Delta t p_N^m)} (p^*_N - p^m_N), \psi_N \rangle + \langle \ln p^*_N, \psi_N \rangle \nonumber \\
				&\quad + \int_\Omega \nabla \cdot (p^m_N \mathbf{u}_N^m) \psi_N \, dx + \langle (-\Delta)^{-1}(p^*_N - n^*_N), \psi_N \rangle.
				\end{align}
				Plugging $\psi_N = \phi_N^{(x_0, y_0)}(x, y) - \phi_N^{(x_1, y_1)}(x, y)$ into \eqref{test:2}, we obtain
				{
				\begin{align}\label{directional-d}
					-\ln \left(\frac{p^*_N(x_0, y_0)}{p^*_N(x_1, y_1)}\right) &= \nabla \cdot (p^m_N \mathbf{u}_N^m)(x_0, y_0) - \nabla \cdot (p^m_N \mathbf{u}_N^m)(x_1, y_1) \nonumber \\
					&\quad + (-\Delta)^{-1}(p^*_N - p_N^m)(x_0, y_0) - (-\Delta)^{-1}(p^*_N - p_N^m)(x_1, y_1) \nonumber \\
				&\quad + \frac{1}{\Delta t} \Big( \mathcal{L}^{-1}_{p_N^m(1 + 2\Delta t p_N^m)} (p^*_N - p^m_N)(x_0, y_0)
				- \mathcal{L}^{-1}_{p_N^m(1 + 2\Delta t p_N^m)} (p^*_N - p^m_N)(x_1, y_1) \Big).
				\end{align}
				}
		It is readily seen that
		$$
			{-\ln \left(\frac{p^*_N(x_0, y_0)}{p^*_N(x_1, y_1)}\right) \geq -\ln\left(\frac{\delta}{\beta_0}\right),}
		$$		 
        and
        $$
        \left| \nabla \cdot (p^m_N \mathbf{u}_N^m)(x_0, y_0) - \nabla \cdot (p^m_N \mathbf{u}_N^m)(x_1, y_1) \right| \leq 2 \| \nabla \cdot (p^{m}_N \uu^{m}_N) \|_{\infty}.
        	$$		 
		Furthermore, using Lemma \ref{aux:lem:2}, we obtain
		$$
		\left| (-\Delta)^{-1}(p^*_N - p_N^m)(x_0, y_0) - (-\Delta)^{-1}(p^*_N - p_N^m)(x_1, y_1) \right| \leq 2 C {\color{blue} N} M_N,
		$$
		and
		$$
		\left| \mathcal{L}^{-1}_{p_N^m(1 + 2\Delta t p_N^m)} (p^*_N - p^m_N)(x_0, y_0)
				- \mathcal{L}^{-1}_{p_N^m(1 + 2\Delta t p_N^m)} (p^*_N - p^m_N)(x_1, y_1) \right| \leq 2 C {\color{blue} N} \frac{M_N}{{\min_{\bz \in \Sigma_z} p^{m}_N(\bz)}}.
				$$
				Substituting the inequalities derived above into (\ref{directional-d}), we obtain 
				\begin{equation}\label{contradiction}
					0 \leq \ln\left(\frac{\delta}{\beta_0}\right)
					+ 2 C {\color{blue} N} \left( M_N + \frac{M_N}{\Delta t \cdot {\min_{\bz \in \Sigma_z} p^{m}_N(\bz)}} \right) + 2 \| \nabla \cdot (p^{m}_N \uu^{m}_N) \|_{\infty}.
				\end{equation}
				This is impossible for any fixed $N$ and $\Delta t$, since we can choose $\delta > 0$ to be sufficiently small. 
				This implies that the absolute minimum of $J(p_N, n_N)$ over $Y_{N,\delta}$ can only occur at an interior point of $Y_{N,\delta}$, provided
				$\delta > 0$ is chosen to be sufficiently small. Since $J(p_N, n_N)$ is smooth, we conclude that there exists a solution $(p^*_N, n^*_N) \in \hat Y_N$ such that
				$$\frac{d}{ds}\Bigg|_{s=0} J(p^*_N + s\phi_N, n^*_N + s\psi_N) = 0, \quad \forall (\phi_N, \psi_N) \in \dot{X}_N \times \dot{X}_N.$$
				Thus, $(p^*_N, n^*_N)$ is a positive solution of the modified discrete PNP-NSE system \eqref{scheme:p1}-\eqref{scheme:psi}.
				The uniqueness of positive solutions to \eqref{scheme:p1}-\eqref{scheme:psi} follows from the strict convexity of $J(p_N, n_N)$ over $\hat Y_N$.
				The existence and uniqueness of $\{ \uu^{m+1}_N, \phi^{m+1}_N \}$ can be easily observed from \eqref{scheme:u1}-\eqref{scheme:u3}. 
			 
		\item \textbf{Unconditional Energy Stability:}
				We first derive the energy inequality for \eqref{scheme:p1}-\eqref{scheme:psi}. Taking the test function $v_N = \mu^{m+1}_N$ 
				in \eqref{scheme:p1} and $v_N = \nu^{m+1}_N$ in \eqref{scheme:n1},
				    we have 
				\begin{align}
                    	& \left\langle \frac{p^{m+1}_N - p^{m}_N}{\Delta t},\ \ln{p^{m+1}_N} + \psi^{m+1}_N \right\rangle 
                    	+ \left\langle p^{m}_N \left| \nabla \mu^{m+1}_N \right|^2,\, 1 \right\rangle \nonumber \\
                            & \quad + \left\langle \frac{n^{m+1}_N - n^{m}_N}{\Delta t},\ \ln{n^{m+1}_N} - \psi^{m+1}_N \right\rangle 
                            + \left\langle n^{m}_N \left| \nabla \nu^{m+1}_N \right|^2,\, 1 \right\rangle \nonumber \\
                            & = \left\langle p^{m}_N \mathbf{u}^{m}_N - 2\Delta t\, (p^{m}_N)^2 \nabla \mu^{m+1}_N,\ \nabla \mu^{m+1}_N \right\rangle \nonumber \\
                            & \quad + \left\langle n^{m}_N \mathbf{u}^{m}_N - 2\Delta t\, (n^{m}_N)^2 \nabla \nu^{m+1}_N,\ \nabla \nu^{m+1}_N \right\rangle.
                            \label{thm:es_p1}
                    \end{align}
				From the convexity of the function $x(\ln{x} - 1)$ for $x > 0$, we know 
				\begin{align}
					& \langle \frac{p^{m+1}_N - p^{m}_N}{\Delta t}, \ln{p^{m+1}_N} \rangle  
						\geq \frac{1}{\Delta t} \bigg( \langle p^{m+1}_N(\ln{p^{m+1}_N} - 1), 1 \rangle
						- \langle p^{m}_N(\ln{p^{m}_N} - 1), 1 \rangle \bigg),
						\label{thm:es_p2} \\
					& \langle \frac{n^{m+1}_N - n^{m}_N}{\Delta t}, \ln{n^{m+1}_N} \rangle  
						\geq \frac{1}{\Delta t} \bigg( \langle n^{m+1}_N(\ln{n^{m+1}_N} - 1), 1 \rangle
						- \langle n^{m}_N(\ln{n^{m}_N} - 1), 1 \rangle \bigg).
						\label{thm:es_n2}
				\end{align} 
				Applying $a(a - b) = \frac{1}{2}(a^2 - b^2 + (a - b)^2)$ and the fact that  
				$$\langle p_N^m - n_N^m, \psi^{m+1}_N \rangle = \| \nabla \psi^{m+1}_N \|^2,$$ we have 
				\begin{equation}
				\label{thm:es_psi}
					\langle \frac{p^{m+1}_N - p^{m}_N}{\Delta t} - \frac{n^{m+1}_N - n^{m}_N}{\Delta t}, \psi^{m+1}_N \rangle 
					= \frac{1}{2\Delta t}(\| \nabla \psi^{m+1}_N \|^2 - \| \nabla \psi^{m}_N \|^2 + \| \nabla(\psi^{m+1}_N - \psi^{m}_N) \|^2).
				\end{equation}
				Combining \eqref{thm:es_p1}, \eqref{thm:es_p2}, \eqref{thm:es_n2}
				 with \eqref{thm:es_psi} we obtain 
				{\color{blue} 
				\begin{equation}
					\label{thm:es_pnp}
					\begin{split}
						& \quad \frac{1}{\Delta t}\bigg( 
						\big(E(p^{m+1}_N) + E(n^{m+1}_N) + \frac{1}{2}\|  \nabla \psi^{m+1}_N \| ^2 \big)
						- 
						\big(E(p^{m}_N) + E(n^{m}_N) + \frac{1}{2}\|  \nabla \psi^{m}_N \| ^2 \big)
						\bigg) \\
						& \quad + \frac{1}{2 \Delta t} \| \nabla (\psi^{m+1}_N - \psi^{m}_N) \| ^2 
						  + \langle p^{m}_N \rvert \nabla \mu^{m+1}_N\rvert ^2,1\rangle
						  + \langle n^{m}_N \rvert \nabla \nu^{m+1}_N\rvert ^2,1\rangle \\
						& \leq \langle p^{m}_N {\bf u}^{m}_N
						 - 2\Delta t (p^{m}_N)^2 \nabla \mu^{m+1}_N, \nabla \mu^{m+1}_N \rangle  
								+ \langle n^{m}_N {\bf u}^{m}_N 
								- 2\Delta t (n^{m}_N)^2 \nabla \nu^{m+1}_N, \nabla \nu^{m+1}_N \rangle.
					\end{split}
				\end{equation}
				}
				Now we derive the energy inequality for \eqref{scheme:u1}-\eqref{scheme:u3}. 
				Taking the test function $v_N= \tilde{\uu}^{m+1}_N$ in \eqref{scheme:u1},
				$v_N= \uu^{m+1}_N$ in \eqref{scheme:u2}, 
				we have 
				\begin{equation}
					\label{thm:es_nse_1}
					\begin{split}
						& \quad \frac{1}{2\Delta t}(\| \tilde{\uu}^{m+1}_N \|^2 - \| \uu^{m}_N \|^2 + \| \tilde{\uu}^{m+1}_N - \uu^{m}_N \|^2) + \| \nabla \tilde{\uu}^{m+1}_N \|^2 
							+ \langle \nabla \phi^{m}_N, \tilde{\uu}^{m+1}_N \rangle \\
						& = -\langle p^{m}_N \nabla \mu^{m+1}_N + n^{m}_N \nabla \nu^{m+1}_N, \tilde{\uu}^{m+1}_N \rangle.
					\end{split}  
				\end{equation}
				and
				\begin{equation}
					\label{thm:es_nse_2}
					\frac{1}{2\Delta t}(\| \uu^{m+1}_N \|^2 - \| \tilde{\uu}^{m+1}_N \|^2 + \| \uu^{m+1}_N - \tilde{\uu}^{m+1}_N \|^2) = 0,  
				\end{equation}
				{where we have used \eqref{scheme:u3} that yields}
				\begin{equation*}
					\langle (\uu^{m}_N \cdot \nabla) \tilde{\uu}^{m+1}_N , \tilde{\uu}^{m+1}_N \rangle 
					= \frac{1}{2} \langle \uu^{m}_N, \nabla | \tilde{\uu}^{m+1}_N |^2 \rangle = 0,
				\end{equation*}
				and
				\begin{equation*}
					\langle \nabla (\phi^{m+1}_N - \phi^{m}_N), \uu^{m+1}_N \rangle = 0.
				\end{equation*}
 				To estimate the term $\langle \nabla \phi^{m}_N, \tilde{\uu}^{m+1}_N \rangle$ in \eqref{thm:es_nse_1}, we take the test function $v_N = \nabla \phi^{m}_N$ in \eqref{scheme:u2}, and obtain
				\begin{equation}
					\label{thm:es_nse_3}
					\langle \tilde{\uu}^{m+1}_N, \nabla \phi^{m}_N \rangle = \frac{\Delta t}{2} \left( \| \nabla \phi^{m+1}_N \|^2 - \| \nabla \phi^{m}_N \|^2 + \| \nabla (\phi^{m+1}_N - \phi^{m}_N) \|^2 \right).
				\end{equation}
				Combining \eqref{thm:es_nse_1}, \eqref{thm:es_nse_2} with \eqref{thm:es_nse_3}, we have 
				\begin{equation}
					\label{thm:es_nse}
					\begin{split}
						& \quad \frac{1}{2\Delta t}(\| \uu^{m+1}_N \|^2 - \| \uu^{m}_N \|^2 + \| \tilde{\uu}^{m+1}_N - \uu^{m}_N \|^2 + \| \uu^{m+1}_N - \tilde{\uu}^{m+1}_N \|^2 ) \\
						& \quad + \frac{\Delta t}{2} (\| \nabla \phi^{m+1}_N \|^2 - \| \nabla \phi^{m}_N \|^2 + \| \nabla(\phi^{m+1}_N - \phi^{m}_N) \|^2) + \| \nabla \tilde{\uu}^{m+1}_N \|^2 \\
						& = -\langle p^{m}_N \nabla \mu^{m+1}_N + n^{m}_N \nabla \nu^{m+1}_N, \tilde{\uu}^{m+1}_N \rangle.
					\end{split}
				\end{equation}
				{\color{blue}
				Combining \eqref{thm:es_pnp} with \eqref{thm:es_nse}, we have 
				\begin{equation}
					\label{thm:es_pnp_nse_0}
					\begin{split}
						& \quad \frac{1}{\Delta t}\bigg( 
						\big(E(p^{m+1}_N) + E(n^{m+1}_N) + \frac{1}{2}\|  \nabla \psi^{m+1}_N \| ^2 + \frac{1}{2}\|  \uu^{m+1}_N \| ^2 + \frac{\Delta t^2}{2} \|  \nabla \phi^{m+1}_N \| ^2  \big) \\
						& \quad - 
						\big(E(p^{m}_N) + E(n^{m}_N) + \frac{1}{2}\|  \nabla \psi^{m}_N \| ^2
							+ \frac{1}{2}\|  \uu^{m}_N \| ^2 + \frac{\Delta t^2}{2} \|  \nabla \phi^{m}_N \| ^2  \big)
						\bigg) \\
						& \quad + \|  \nabla \tilde{\uu}^{m+1}_N \| ^2 
						  + \langle p^{m}_N \rvert \nabla \mu^{m+1}_N\rvert ^2,1\rangle
						  + \langle n^{m}_N \rvert \nabla \nu^{m+1}_N\rvert ^2,1\rangle \\
						& \quad + \frac{1}{2\Delta t} \|  \nabla (\psi^{m+1}_N - \psi^{m}_N) \| ^2 
							+ \frac{1}{2\Delta t} \|  \uu^{m+1}_N - \tilde{\uu}^{m+1}_N \| ^2 \\
						& \quad + \frac{1}{2\Delta t} \|  \tilde{\uu}^{m+1}_N - \uu^{m}_N \| ^2
							+ \frac{\Delta t}{2} \|  \nabla(\phi^{m+1}_N - \phi^{m}_N) \| ^2 \\
						\leq &\quad  \langle p^{m}_N {\bf u}^{m}_N 
						- 2\Delta t (p^{m}_N)^2 \nabla \mu^{m+1}_N, \nabla \mu^{m+1}_N \rangle  
						-\langle p^{m}_N \nabla \mu^{m+1}_N, \tilde{\uu}^{m+1}_N \rangle \\
						&\quad	+ \langle n^{m}_N {\bf u}^{m}_N 
						- 2\Delta t (n^{m}_N)^2 \nabla \nu^{m+1}_N, \nabla \nu^{m+1}_N \rangle  
						-\langle  n^{m}_N \nabla \nu^{m+1}_N, \tilde{\uu}^{m+1}_N \rangle.
					\end{split}
				\end{equation}
				Now if we denote 
				\begin{align*}
					& \uu^{m}_{*,p} = \uu^{m}_N - 2 \Delta t p^{m}_N \nabla \mu^{m+1}_N, \\
					& \uu^{m}_{*,n} = \uu^{m}_N - 2 \Delta t n^{m}_N \nabla \nu^{m+1}_N, 
				\end{align*}
				    the terms of the right hand side of \eqref{thm:es_pnp_nse_0} can be rewritten as	
				\begin{equation}
					\label{thm:es_pnp_nse_1}
					\begin{split}
						& \quad \langle p^{m}_N \uu^{m}_N - 2\Delta t (p^{m}_N)^2 \nabla \mu^{m+1}_N, \nabla \mu^{m+1}_N \rangle - 
						\langle p^{m}_N \nabla \mu^{m+1}_N, \tilde{\uu}^{m+1}_N \rangle \\
						& = \langle \uu^{m}_N - 2\Delta t p^{m}_N \nabla \mu^{m+1}_N, p^{m}_N \nabla \mu^{m+1}_N \rangle 
							- \langle \tilde{\uu}^{m+1}_N, p^{m}_N \nabla \mu^{m+1}_N \rangle \\
						& = \frac{1}{2\Delta t} \langle \uu^{m}_{*, p} - \tilde{\uu}^{m+1}_N, \uu^{m}_N - \uu^{m}_{*,p} \rangle \\
						& = \frac{1}{2\Delta t} (\langle \uu^{m}_{*,p} - \uu^{m}_N , \uu^{m}_N \rangle  
							+ \langle \uu^{m}_N - \tilde{\uu}^{m+1}_N, \uu^{m}_N \rangle
							- \langle \uu^{m}_{*,p} - \tilde{\uu}^{m+1}_N, \uu^{m}_{*,p} \rangle ) \\
						& = \frac{1}{4\Delta t}\big( 
							-\|  \uu^{m}_N - \uu^{m}_{*,p} \| ^2
							-\|  \uu^{m}_{*,p} - \tilde{\uu}^{m+1}_N \| ^2
							+ \|  \tilde{\uu}^{m+1}_N - \uu^{m}_N \| ^2  
							\big),
					\end{split}
				\end{equation}
				where we have used the following  identity in the last step
				\begin{equation*}
					(a - b) a = \frac{1}{2}\big(a^2 - b^2 + (a - b)^2\big).
				\end{equation*}
				Similarly, we have
				\begin{equation}
					\label{thm:es_pnp_nse_2}
					\begin{split}
						& \quad \langle n^{m}_N \uu^{m}_N - 2\Delta t (n^{m}_N)^2 \nabla \nu^{m+1}_N, \nabla \nu^{m+1}_N \rangle - 
						\langle n^{m}_N \nabla \nu^{m+1}_N, \tilde{\uu}^{m+1}_N \rangle \\
						& = \langle \uu^{m}_N - 2\Delta t n^{m}_N \nabla \nu^{m+1}_N, n^{m}_N 
							\nabla \nu^{m+1}_N \rangle 
							- \langle \tilde{\uu}^{m+1}_N, n^{m}_N \nabla \nu^{m+1}_N \rangle \\
						& = \frac{1}{2\Delta t} \langle \uu^{m}_{*, n} - \tilde{\uu}^{m+1}_N, \uu^{m}_N - \uu^{m}_{*,n} \rangle \\
						& = \frac{1}{4\Delta t}\big( 
							-\|  \uu^{m}_N - \uu^{m}_{*,n} \| ^2
							-\|  \uu^{m}_{*,n} - \tilde{\uu}^{m+1}_N \| ^2
							+ \|  \tilde{\uu}^{m+1}_N - \uu^{m}_N \| ^2  
							\big).
					\end{split}
				\end{equation}
				Now plug \eqref{thm:es_pnp_nse_1} {\color{blue} and} \eqref{thm:es_pnp_nse_2} into \eqref{thm:es_pnp_nse_0}, we have
				\begin{equation*}
					\begin{split}
						& \quad \frac{1}{\Delta t}\bigg( 
						\big(E(p^{m+1}_N) + E(n^{m+1}_N) + \frac{1}{2}\|  \nabla \psi^{m+1}_N \| ^2 + \frac{1}{2}\|  
						\uu^{m+1}_N \| ^2 + \frac{\Delta t^2}{2}\|  \nabla \phi^{m+1}_N \| ^2  \big) \\
						& \quad - 
						\big(E(p^{m}_N) + E(n^{m}_N) + \frac{1}{2}\|  \nabla \psi^{m}_N \| ^2
							+ \frac{1}{2}\|  \uu^{m}_N \| ^2 + \frac{\Delta t^2}{2} \|  \nabla \phi^{m}_N \| ^2  \big)
						\bigg) \\
						&\le - \|  \nabla \tilde{\uu}^{m+1}_N \| ^2 
						  - \langle p^{m}_N \rvert \nabla \mu^{m+1}_N\rvert ^2,1 \rangle
						  - \langle n^{m}_N \rvert \nabla \nu^{m+1}_N\rvert ^2,1 \rangle  \\
						& \quad -\frac{1}{2\Delta t} \|  \nabla (\psi^{m+1}_N - \psi^{m}_N) \| ^2
							- \frac{1}{2\Delta t} \|  \uu^{m+1}_N - \tilde{\uu}^{m+1}_N \| ^2 
							- \frac{\Delta t}{2} \|  \nabla(\phi^{m+1}_N - \phi^{m}_N) \| ^2 \\ 
						& \quad - \frac{1}{4\Delta t}\big( \|  \tilde{\uu}^{m+1}_N - \uu^{m+1}_{*,p} \| ^2
							+ \|  \uu^{m}_N - \uu^{m+1}_{*,p} \| ^2 
						 +\|  \uu^{m}_N - \uu^{m+1}_{*,n} \| ^2 + \| \tilde{\uu}^{m+1}_N -\uu^{m+1}_{*,n}\| ^2 \big)\\
						&\le - \|  \nabla \tilde{\uu}^{m+1}_N \| ^2 
						  - \langle p^{m}_N \rvert \nabla \mu^{m+1}_N\rvert ^2,1\rangle
						  - \langle  n^{m}_N \rvert \nabla \nu^{m+1}_N\rvert ^2,1 \rangle .
					\end{split}
				\end{equation*}
				This yields the energy inequality for \eqref{scheme:p1} - \eqref{scheme:u2}.				
								
				}
	\end{enumerate}
\end{proof}

\section{Well-posedness and Regularity}
\label{ch-PNPNS-sec-reg}

In this section, we shall establish the well-posedness and regularity of the PNP-NS system. 
For simplicity, we shall focus on periodic boundary conditions , for which the regularity of the solution can be determined by the regularity of the initial conditions. More precisely, we set $\Omega = (0, 2\pi)^2$ and assume that 
\begin{equation}\label{bc}
    \begin{array}{cc}
        & (p, n, \psi, \uu)(2\pi, y) = (p, n, \psi, \uu)(0, y), \quad y \in (0, 2\pi); \\
        & (p, n, \psi, \uu)(x, 2\pi) = (p, n, \psi, \uu)(x, 0), \quad x \in (0, 2\pi).
    \end{array}
\end{equation}

\begin{theorem} \label{existence} 
Let $\Omega = (0, 2\pi)^2$, and assume the initial conditions $(p^{in}, n^{in}) \in L^r(\Omega) \cap W^{2,q}(\Omega)$, with $r = 2q > 4$, are positive and satisfy $\displaystyle\int_\Omega (p^{in} - n^{in}) \, dx = 0$, and the velocity  $\uu^{in} \in W^{1,r}_0(\Omega,\mathbb{R}^2)$ is divergence-free. Then there exists a unique global 
strong solution of \eqref{gov_equ_1}--\eqref{gov_equ_5} with the initial condition \eqref{ic} and the periodic boundary condition \eqref{bc}. Moreover, there exists a constant $C_r$
depending on $\varepsilon$ and the initial energy $E(p^{in}, n^{in}, \uu^{in})$, $\| p^{in}\|_{L^r}$, $\| n^{in}\|_{L^r}$, and $\| \uu^{in}\|_{L^2}$
such that 
\begin{equation*}
    \sup_{0 \leq t < \infty} \| p(t) \|_{L^{r}(\Omega)} \le C_r, \quad
    \sup_{0 \le t < \infty}  \| n(t) \|_{L^{r}(\Omega)} \le C_r, \quad
    \sup_{0 \le t < \infty} \| \psi(t) \|_{W^{2,r}(\Omega)} \le C_r. 
\end{equation*}
Furthermore, 
\begin{equation*}
    \sup_{0 \leq t < \infty} \| p(t) \|_{L^{\infty}(\Omega)} + \sup_{0 \le t < \infty} \| p(t) \|_{H^{1}(\Omega)} \le C, \quad
    \sup_{0 \leq t < \infty} \| n(t) \|_{L^{\infty}(\Omega)} + \sup_{0 \le t < \infty} \| n(t) \|_{H^{1}(\Omega)} \le C,
\end{equation*}
and the velocity field $\uu$ satisfies
$$
\| \uu \|_{L^\infty(0,T; H^1(\Omega))}^2 + \int_0^T \| \uu(t) \|_{H^2(\Omega)}^2\, dt \le  C T,
$$
for any $0 < T < \infty$, where $C$ depends on initial energy, {$\| p^{in} \|_{L^{r}(\Omega)}$, $\| n^{in} \|_{L^{r}(\Omega)}$}, and $\| \uu^{in}\|_{H^1(\Omega)}$. 
\end{theorem}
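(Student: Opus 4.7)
The plan is to derive a priori estimates in a bootstrap fashion, then invoke a Galerkin approximation scheme to construct a global strong solution; uniqueness is obtained by a standard energy argument applied to the difference of two strong solutions. I would begin from the energy identity \eqref{law}, which, after integration in time, yields the basic controls $\uu\in L^\infty_tL^2\cap L^2_tH^1$, $\nabla\psi\in L^\infty_tL^2$, and $p\log p,\,n\log n\in L^\infty_tL^1$, together with the flux dissipation bounds $\int_0^T\!\int_\Omega p\,|\nabla\mu|^2\,dx\,dt$ and $\int_0^T\!\int_\Omega n\,|\nabla\nu|^2\,dx\,dt$.

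The crucial step is upgrading the integrability of $p,n$. I would test the $p$-equation with $p^{s-1}$ and the $n$-equation with $n^{s-1}$ for arbitrary $s\ge 2$. Because $\divergence\uu=0$ the convective contributions drop out, while integration by parts combined with $-\Delta\psi=p-n$ converts the drift terms into $\tfrac{s-1}{s}\int p^s(n-p)\,dx$ and $\tfrac{s-1}{s}\int n^s(p-n)\,dx$. Summing the two identities gives
\begin{equation*}
\frac{1}{s}\frac{d}{dt}\bigl(\|p\|_{L^s}^s+\|n\|_{L^s}^s\bigr)+\frac{4(s-1)}{s^2}\bigl(\|\nabla p^{s/2}\|_{L^2}^2+\|\nabla n^{s/2}\|_{L^2}^2\bigr)=-\frac{s-1}{s}\int_\Omega(p-n)(p^s-n^s)\,dx\le 0,
\end{equation*}
since the two factors on the right share the same sign pointwise. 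Thus $\|p(t)\|_{L^s}^s+\|n(t)\|_{L^s}^s$ is monotonically non-increasing, which gives uniform-in-time $L^r$ bounds for every $s\ge 2$. Letting $s\to\infty$ and using that $W^{2,q}\hookrightarrow L^\infty$ in 2D for $q>1$ (so that $p^{in},n^{in}\in L^\infty$) produces the uniform $L^\infty$ bound on $p,n$. Standard elliptic regularity applied to $-\Delta\psi=p-n$ on the torus then gives $\psi\in L^\infty_tW^{2,r}$ with $r$ arbitrary, and the Sobolev embedding in 2D yields the crucial pointwise bound $\nabla\psi\in L^\infty_tL^\infty$.

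With $p$, $n$, and $\nabla\psi$ uniformly bounded in $L^\infty$, the Navier--Stokes forcing $-\nabla\psi(p-n)$ is uniformly bounded. I would apply the classical 2D regularity theory by testing \eqref{gov_equ_4} against $-\Delta\uu$; the convective term $\int(\uu\cdot\nabla)\uu\cdot\Delta\uu$ is controlled via Ladyzhenskaya's inequality and the already established $L^\infty_tL^2\cap L^2_tH^1$ bound on $\uu$, and Gronwall closes the estimate on any finite $[0,T]$ to yield $\uu\in L^\infty(0,T;H^1)\cap L^2(0,T;H^2)$ with norms growing at most linearly in $T$. The $H^1$ bounds on $p,n$ are then obtained by testing the PNP equations against $-\Delta p$ and $-\Delta n$; every cross term closes using the $L^\infty$ bounds on $(p,n,\nabla\psi)$ and the $\uu\in L^2_tH^2$ bound just obtained.

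For \textbf{existence}, these a priori estimates are carried out on a Galerkin truncation (or on a regularized PNP-NS system with mollified, strictly positive initial data), and one passes to the limit via Aubin--Lions compactness; positivity of $p$ and $n$ in the limit is inherited from the maximum principle applied at the regularized level. For \textbf{uniqueness}, I would write the system satisfied by the difference $(\delta p,\delta n,\delta\psi,\delta\uu)$ of two strong solutions, test with $(\delta p,\delta n,\delta\uu)$, absorb the bilinear cross terms using the $L^\infty$ bounds already proved, and close by Gronwall. The main obstacle I anticipate is the upgrade from $L^s$ to uniform $L^\infty$ bounds on $p,n$: the monotonicity argument above is clean precisely because no boundary integrals appear under integration by parts on the torus, and the key sign $-(p-n)(p^s-n^s)\le 0$ depends essentially on the periodic (or no-flux) structure. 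A secondary technical point is arranging the Galerkin approximation so that positivity survives the limit, which usually forces one to work at the level of the entropy variables $\mu=\ln p+\psi$, $\nu=\ln n-\psi$.
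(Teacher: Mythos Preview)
Your proposal is correct and takes a genuinely different, more elementary route than the paper. The paper follows the bootstrap strategy of Constantin--Ignatova: it first bounds $\|\psi\|_{L^\infty}$ via an energy argument, then uses the dissipation $\int_0^T\!\int_\Omega p\,|\nabla\mu|^2$ to obtain a local-in-time bound on $\int_{t_0}^{t_0+\tau}\|p\|_{L^r}\,dt$, feeds this into a Gronwall inequality for $\|p\|_{L^r}^r+\|n\|_{L^r}^r$ (where the drift term is controlled by $\|\nabla\psi\|_{L^\infty}\lesssim\|p-n\|_{L^r}$), and finally upgrades to $L^\infty$ via a further $H^1$ bootstrap. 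You instead exploit the algebraic sign identity $(p-n)(p^s-n^s)\ge 0$, which makes $\|p(t)\|_{L^s}^s+\|n(t)\|_{L^s}^s$ monotone non-increasing for every $s\ge 2$ in a single step; the $L^\infty$ bound then follows by sending $s\to\infty$. Your argument is shorter, gives a sharper conclusion (monotonicity, with constants independent of $\varepsilon$ and of the energy), and bypasses the need for the preliminary $\|\psi\|_{L^\infty}$ estimate. The Constantin--Ignatova route, on the other hand, is more robust to boundary conditions under which your integration-by-parts fails to produce the clean sign---for instance inhomogeneous Dirichlet data on $\psi$, which is why that paper proceeds as it does; on the torus your shortcut is the cleaner choice. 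The remaining steps (Navier--Stokes $H^1$--$H^2$ estimate via testing against $-\Delta\uu$, then $H^1$ bounds on $p,n$ via testing against $-\Delta p,-\Delta n$) coincide with the paper's Step~5.
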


{
\begin{proof}
A similar result for blocking boundary conditions has been obtained by Constantin and Ignatova \cite{constantin2019nernst}. Their argument remains applicable for periodic boundary conditions, which will be sketched here for completeness. For the full proof, refer to \cite{constantin2019nernst}.   

\textbf{Step 1:} Firstly, we have
\begin{equation}\label{thm_reg_psi_bound}
    \| \psi \|_{L^\infty(\Omega \times [0, T])} \leq C E(p^{in}, n^{in},  \psi^{in}),
\end{equation}		
which is a direct application of 
Lemma 1 in \cite{constantin2019nernst}, following the same proof for periodic boundary conditions.  

\noindent \textbf{Step 2:} We aim to show $p > 0$ and $n > 0$ in $\Omega \times [0,T]$. To see this, let $F:\mathbb{R} \to \mathbb{R}$ be a nonnegative,
$C^2$-convex function such that $F(t) = 0$ for $t > 0$, and $F(t) > 0$ for $t < 0$, and
$$F''(t)t^2 \le C F(t), \quad \forall t \in \mathbb{R}.$$ 
Multiplying \eqref{gov_equ_1} by $F'(p)$ and integrating over $\Omega$, using the periodic boundary conditions  and integration by parts,
we obtain that $\displaystyle\int_\Omega \uu \cdot \nabla F(p) \, dx = -\int_\Omega \nabla \cdot \uu \, F(p)\, dx = 0$, and hence
$$
\frac{d}{dt}\int_\Omega F(p) \, dx = -\int_\Omega F''(p) \left[ |\nabla p|^2 + p \nabla \psi \cdot \nabla p \right]\, dx,
$$
which, combined with the Cauchy-Schwarz inequality $| p \nabla \psi \cdot \nabla p | \leq \frac{1}{2} |\nabla p|^2 + \frac{1}{2} p^2 |\nabla \psi|^2$, yields
\begin{equation} \label{thm_reg_aux1}
    \frac{d}{dt}\int_\Omega F(p) \, dx \le - \frac{1}{2} \int_{\Omega} F''(p) |\nabla p|^2 \, dx  + \frac{1}{2} \int_{\Omega} F''(p) p^2 |\nabla \psi|^2 \, dx.
\end{equation}
From the properties of $F$, we have
$$
\frac{d}{dt}\int_\Omega F(p) \, dx  \le \frac{C}{2} \| \nabla\psi\|_{L^\infty(\Omega)}^2 \int_\Omega F(p)\, dx. 
$$
By the Gronwall inequality and $\displaystyle\int_\Omega F(p^{in}) \, dx = 0$, we conclude that 
$\displaystyle\int_\Omega F(p)\, dx = 0$, and hence 
$F(p) \equiv 0$, which yields that $p > 0$ in $\Omega \times [0,T]$. Similarly, $n > 0$ in $\Omega \times [0,T]$. 

\noindent \textbf{Step 3:} We aim to estimate the local uniform bound for $\| (p, n) \|_{L^{1}_{t}L^{r}(\Omega)}$.  Because of the energy dissipation law \eqref{law}, we have 
\begin{equation*}
    \int_0^T \int_{\Omega} p | \nabla \ln (p e^{\psi})|^2 \, dx \, dt =   \int_0^{T} \int_{\Omega} p | \nabla \mu |^2 \, dx \, dt \leq E(p^{in}, n^{in}, \psi^{in}) \triangleq \Gamma.
\end{equation*}
Using \eqref{thm_reg_psi_bound} in Step 1, we know that the auxiliary function 
\begin{equation*}
    \tilde{p} \triangleq p e^{\psi}
\end{equation*}
satisfies the estimate
\begin{equation*}
    \int_{0}^{T} \int_{\Omega} \frac{1}{\tilde{p}} | \nabla \tilde{p}|^2 \, dx \, dt \leq \Gamma e^{C\Gamma}. 
\end{equation*}
From the mass conservation property and \eqref{thm_reg_psi_bound}, we have 
\begin{equation*}
     \int_{\Omega} p e^{\psi} \, dx  \leq  e^{C\Gamma} \int_{\Omega} p^{in} \, dx.
\end{equation*}
Combining the previous two equations, for any $t_0 \in [0, T]$ and $\tau \in [0, T - t_0]$, we have 
\begin{equation*}
    \int_{t_0}^{t_0 + \tau} \| \sqrt{\tilde{p}} \|_{H^1(\Omega)} \, dt \leq  e^{C\Gamma} \left( \Gamma + \tau \int_{\Omega} p^{in} \, dx \right). 
\end{equation*}
Thus, from the Sobolev embedding $\| \sqrt{\tilde{p}} \|_{L^{r}(\Omega)} \leq \| \sqrt{\tilde{p}} \|_{H^{1}(\Omega)}$ for any $r \in [1, \infty)$, applying \eqref{thm_reg_psi_bound} again, we have the local uniform estimate for $\| p \|_{L^{r}(\Omega)}$
\begin{equation}\label{thm_reg_local}
    \int_{t_0}^{t_0 + \tau} \| p \|_{L^r(\Omega)} \, dt \leq  C_r e^{C\Gamma} \left( \Gamma + \tau \int_{\Omega} p^{in} \, dx \right),
\end{equation}
where $C_r$ depends on $r$. Similar estimates hold for $n$.

\noindent \textbf{Step 4:} Now we can estimate the global bound for $\| (p, n) \|_{L^{r}(\Omega)}$. To do this, taking $F(p) = \frac{1}{r(r-1)} p^{r} $ in \eqref{thm_reg_aux1}, we obtain
\begin{equation*}\label{thm_reg_lr_est}
    \frac{1}{r(r-1)} \frac{d}{dt} \int_{\Omega} | p |^r \, dx  \leq -\frac{1}{2} \int_{\Omega} | \nabla p|^2 p^{r-2} \, dx + \frac{1}{2} \| \nabla \psi \|_{L^{\infty}(\Omega)} \int_{\Omega} |p|^r \, dx.  
\end{equation*}
Similar estimates hold for $n$:
\begin{equation*}
    \frac{1}{r(r-1)} \frac{d}{dt} \int_{\Omega} | n |^r \, dx  \leq -\frac{1}{2} \int_{\Omega} | \nabla n|^2 n^{r-2} \, dx + \frac{1}{2} \| \nabla \psi \|_{L^{\infty}(\Omega)} \int_{\Omega} |n|^r \, dx.  
\end{equation*}	
From the regularity of the Poisson equation, we know that
\begin{equation*}
    \| \nabla  \psi \|_{L^{\infty}(\Omega)} \leq \frac{C_{r}}{\varepsilon} \| p - n \|_{L^{r}(\Omega)} \leq \frac{C_{r}}{\varepsilon} \left( \| p \|_{L^{r}(\Omega)} + \| n \|_{L^{r}(\Omega)} \right).
\end{equation*}
From here we obtain
\begin{equation}\label{thm_reg_gronwall}
    \frac{1}{r(r-1)} \frac{d}{dt} A_{r} \leq -\frac{1}{2} \int_{\Omega} \left( | \nabla p |^2 p^{r-2} + | \nabla n|^2 n^{r-2} \right) \, dx + \frac{C_r}{2\varepsilon} A_r^{\frac{1}{r}} A_r,
\end{equation}
where $A_r = \| p \|^r_{L^{r}(\Omega)} + \| n \|^{r}_{L^{r}(\Omega)}$.  From \eqref{thm_reg_local}, we have 
\begin{equation}\label{thm_reg_local_combined}
    \int_{t_0}^{t_0 + \tau} A_r^{\frac{1}{r}} \, dt \leq C_r e^{C\Gamma} \left( \Gamma + \tau \int_{\Omega} p^{in} + n^{in} \, dx \right ) \triangleq \Gamma^{*}, 
\end{equation}
Combining this with \eqref{thm_reg_gronwall}, we obtain
\begin{equation}\label{thm_reg_ext}
    A_r(t_0 + \tau) \leq A_r(t_0) e^{C_r \Gamma^{*} / \varepsilon}.
\end{equation}
Now we cover the interval $[0,T]$ with fixed time step intervals $\{ (t_k, t_k + \frac{\tau}{2}) \mid k \in \mathbb{N} \}$.  From \eqref{thm_reg_local_combined}, for any $k$, there exists some $t^{*} \in [t_k - \frac{\tau}{2}, t_k]$ such that $A_r^{\frac{1}{r}}(t^{*}) \leq  \Gamma_{\tau} \triangleq \max\left( \frac{2}{\tau} \Gamma^{*}, \left( \| p^{in} \|_{L^r(\Omega)} + \| n^{in} \|_{L^{r}(\Omega)} \right)^{\frac{1}{r}} \right)$, which, combining with \eqref{thm_reg_ext} and $[t_k, t_k + \frac{\tau}{2}] \subset [t^{*}, t^{*} + \tau]$, yields
\begin{equation*}
    \sup_{t\in [t_k, t_k + \frac{\tau}{2}]} A_r(t) \leq \Gamma_{\tau},
\end{equation*} 
for a slightly different $\Gamma_{\tau}$. 
Notice that the right-hand side only depends on initial energy $\Gamma$, initial ion mass $\int_{\Omega} p^{in} + n^{in} \, dx$, $\varepsilon$, and $r$; it is independent of time $T$. We can extend the estimate to the entire time interval by an induction argument, and from the regularity of the Poisson equation obtain the global bound 
\begin{equation}\label{thm_reg_lr_bound}
    \sup_{0 \leq t < \infty} \| p(t) \|_{ L^{r}(\Omega)}, \quad \sup_{0 \leq t < \infty} \| n(t)  \|_{ L^{r}(\Omega)}, \quad \sup_{0 \leq t < \infty} \| \psi(t) \|_{W^{2,r}(\Omega)}  \leq C_r^{*},
\end{equation}
where $C_r^{*}$ depends on $r$, $\varepsilon$, initial energy,  initial ion mass, $\| p^{in} \|_{L^{r}(\Omega)},  \| n^{in} \|_{L^{r}(\Omega)}$.
Returning to \eqref{thm_reg_gronwall}, we know that 
\begin{equation}\label{thm_reg_grad_p_local_uniform}
    \int_{t_0}^{t_0 + \tau} \int_{\Omega} \left( |\nabla p|^2 p^{r-2} + |\nabla n|^2 n^{r-2} \right) \, dx \, dt \leq \Gamma_{\tau}, 
\end{equation}
for some $\Gamma_\tau$ depending on $\Gamma^{*}$ and $\tau$.	 

\textbf{Step 5:} Now we are ready to estimate $\| p,  n \|_{L^{\infty}(\Omega)}$.
Multiplying \eqref{gov_equ_1} by $-\Delta p$ and integrating, we have 
\begin{align}
    \frac{1}{2} \frac{d}{dt} \| \nabla p \|^2_{L^{2}(\Omega)} &= - \| \Delta p \|_{L^{2}(\Omega)}^2 - \int_{\Omega} \nabla \cdot (p \nabla \psi) \Delta p \, dx - \int_{\Omega} \uu \cdot \nabla p \Delta p \, dx,  \nonumber \\
    & \leq  - \| \Delta p \|_{L^{2}(\Omega)}^2  + \| \nabla p \|_{L^{4}(\Omega)}  \left( \| \nabla \psi \|_{L^{4}(\Omega)} + \| \uu \|_{L^{4}(\Omega)} \right) \| \Delta p \|_{L^{2}(\Omega)}   \nonumber \\
    & \quad + \| p \|_{L^{4}(\Omega)} \| \Delta \psi \|_{L^{4}(\Omega)}  \| \Delta p \|_{L^{2}(\Omega)}.  \label{thm_reg_gronwall_grad_p}
\end{align}
We have a global bound for $\| \nabla \psi \|_{L^{4}(\Omega)}$, $\| p \|_{L^{4}(\Omega)}$, $\| \Delta \psi \|_{L^{4}(\Omega)}$ from \eqref{thm_reg_lr_bound}. 
And from energy law \eqref{law}, we know that $\max_{t \in [0,T]} \| \uu(t) \|_{L^2(\Omega)}^{2} $ and $ \int_{0}^{T}  \| \nabla \uu \|_{L^2(\Omega)}^{2}  \, dt$ are bounded by initial energy $\Gamma$. Hence, we have the uniform bound for $\| \uu \|_{L^{4}([0, T] \times \Omega)}$,
\begin{align*}
    \int_{0}^{T} \| \uu \|_{L^{4}(\Omega)}^4 \, dt & \leq \int_{0}^{T} C \| \uu \|_{L^2(\Omega)}^{2}   \| \nabla \uu \|_{L^2(\Omega)}^{2}  \, dt, \\
    & \leq C  \max_{t \in [0,T]} \| \uu(t) \|_{L^2(\Omega)}^{2} \int_{0}^{T}  \| \nabla \uu \|_{L^2(\Omega)}^{2}  \, dt, \\
    & \leq C \Gamma^2.
\end{align*}
Applying these bounds to \eqref{thm_reg_gronwall_grad_p}, we have 
\begin{equation*}
    \frac{d}{dt} \| \nabla p \|_{L^{2}(\Omega)}^2 + \| \Delta p \|_{L^{2}(\Omega)}^2 \leq \Gamma \| \nabla p \|_{L^{2}(\Omega)}^2. 
\end{equation*}
Applying the local uniform bound for $\| \nabla p \|_{L^{2}_t L^{2}(\Omega)} $ from \eqref{thm_reg_grad_p_local_uniform} with $r = 2$, we cover the interval $[0,T]$ with fixed time step intervals $\{ (t_k, t_k + \frac{\tau}{2}) \mid k \in \mathbb{N} \}$. With a similar argument as in Step 4, we have 
\begin{equation*}
    \sup_{0 \leq t \leq T} \| \nabla p \|_{L^{2}(\Omega)} \leq \Gamma_{\tau},
\end{equation*}
and for any $[t_0, t_0 + \tau] \subset [0, T]$
\begin{equation*}
    \int_{t_0}^{t_0 + \tau} \| \Delta p \|_{L^{2}(\Omega)}^2 \, dt \leq \Gamma_{\tau}.
\end{equation*}
Hence, we have the local uniform bound of $\| p \|_{L^{\infty}(\Omega)}$
\begin{equation}\label{thm_reg_p_infty_local_uniform}
    \int_{t_0}^{t_0 + \tau} \| p \|_{L^{\infty}(\Omega)} \, dt \leq \int_{t_0}^{t_0 + \tau} \| p \|_{H^{2}(\Omega)}  \, dt \leq \Gamma_\tau.
\end{equation}
Now, multiplying \eqref{gov_equ_1} by $p^{r-1}$ and integrating over $\Omega$, we have
\begin{align*}
    \frac{1}{r} \frac{d}{dt} \int_{\Omega} p^r \, dx   
    &= -(r-1) \int_{\Omega} |\nabla p|^2 p^{r-2} \, dx +  \int_{\Omega} \nabla p \cdot \nabla \psi \, p^{r-1} \, dx + 		\int_{\Omega} \Delta \psi \, p^{r} \, dx,  \\
    &\leq -\frac{r-1}{2} \int_{\Omega} |\nabla p|^2 p^{r-2} \, dx + \frac{1}{2(r-1)} \| \nabla \psi \|^2_{L^{\infty}(\Omega)} \| p \|^r_{L^{r}(\Omega)} + \| \Delta \psi \|_{L^{\infty}(\Omega)} \| p \|^r_{L^{r}(\Omega)}.
\end{align*}
Therefore, for any $t \ge t_0$ we have
\begin{equation*}
    \| p(t) \|_{L^{r}(\Omega)} \leq \| p(t_0)  \|_{L^{r}(\Omega)} e^{\int_{t_0}^{t} \left( \frac{1}{2(r-1)} \| \nabla \psi \|^2_{L^{\infty}(\Omega)}  + \| \Delta \psi \|_{L^{\infty}(\Omega)} \right) \, dt }.
\end{equation*}				
Taking the limit as $r \rightarrow \infty$, we obtain
\begin{equation*}
    \| p (t)  \|_{L^{\infty}(\Omega)} \leq \| p (t_0)  \|_{L^{\infty}(\Omega)}  e^{\int_{t_0}^{t}  \| \Delta \psi \|_{L^{\infty}(\Omega)} \, dt }.
\end{equation*}
Combining $\| p \|_{L^{\infty}(\Omega)}$ local uniform estimate \eqref{thm_reg_p_infty_local_uniform} and cover interval $[0,T]$ with fixed time step intervals $\{ (t_k, t_k + \frac{\tau}{2}) \mid k \in \mathbb{N} \}$, with a similar induction argument as in Step 4,  we have 
\begin{equation*}
    \sup_{0 \leq t < T} \| p(t) \|_{L^{\infty}(\Omega)} \leq \Gamma_{\tau}.
\end{equation*}
Since the forcing term in \eqref{gov_equ_4} is in $L^{2}(\Omega)$, from the energy inequality \eqref{gov_equ_1} and on the standard estimates on non-stationary Navier-Stokes equation, we have 
\begin{equation*}
    \| \uu \|_{L^\infty(0,T; H^1(\Omega))}^2 + \int_0^T \| \uu(t) \|_{H^2(\Omega)}^2\, dt \le  C T,
\end{equation*}
where $C$ depends on the initial energy and other constants.
This completes the proof. 
\end{proof} 
}

\begin{corollary}(Maximum principle)\label{maximal_principle}
	Assuming $p^{in} \geq \delta_p, \ n^{in} \geq \delta_n$ for some $\delta_p, \delta_n > 0$, then we have $p \geq \delta_p, n \geq \delta_n$ on $\Omega \times [0, T]$. 
\end{corollary}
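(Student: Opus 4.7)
My plan is to adapt the convex-function strategy used for positivity in Step 2 of the proof of Theorem \ref{existence}, now working with the shifted excesses $v := (\delta_p - p)_+$ and $w := (\delta_n - n)_+$. By the hypotheses on the initial data, both $v$ and $w$ vanish identically at $t = 0$, and the goal is to show they remain identically zero throughout $\Omega \times [0, T]$, which is equivalent to the claimed lower bounds $p \geq \delta_p$ and $n \geq \delta_n$.

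First, I will multiply equation \eqref{gov_equ_1} by $-v$, integrate over $\Omega$, and use $\nabla \cdot \uu = 0$, the periodic boundary conditions, and the Poisson relation $\Delta \psi = n - p$ from \eqref{gov_equ_3}. The convective term vanishes by the divergence-free condition; on the support $\{p < \delta_p\}$ (where $p = \delta_p - v$ and $\nabla p = -\nabla v$) the diffusive term contributes $-\|\nabla v\|_{L^2}^2$; and the electromigration term $\nabla \cdot (p \nabla \psi)$, after expansion and Young's inequality applied to the resulting $\int p \nabla\psi\cdot\nabla v$, produces
\begin{equation*}
\frac{1}{2}\frac{d}{dt}\|v\|_{L^2}^2 + \|\nabla v\|_{L^2}^2 \leq C_1 \|v\|_{L^2}^2 + \delta_p \int_\Omega (\delta_p - n)_+\, v\, dx,
\end{equation*}
where $C_1$ depends only on the $L^\infty$ bounds of $p$, $n$, $\nabla\psi$ secured in Theorem \ref{existence}, and where I have absorbed the cubic term $\int v^3\, dx \leq \delta_p \|v\|_{L^2}^2$ using the obvious bound $v \leq \delta_p$.

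Next, the identical calculation applied to \eqref{gov_equ_2} (where the electromigration sign is reversed) yields a symmetric inequality for $w$. The main technical step is the control of the sign-indefinite cross-term $\delta_p \int (\delta_p - n)_+\, v\, dx$, which is absent in the positivity proof of Step 2 (where the shift is $0$): assuming $\delta_p \leq \delta_n$ without loss of generality by relabeling, the inequality $(\delta_p - n)_+ \leq (\delta_n - n)_+ = w$ combined with Cauchy--Schwarz dominates it by $\tfrac{\delta_p}{2}\bigl(\|v\|_{L^2}^2 + \|w\|_{L^2}^2\bigr)$; the corresponding cross-term in the $w$-equation is treated by the symmetric bootstrap after the $v$-estimate is closed. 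Adding the two inequalities produces the coupled Gronwall bound
\begin{equation*}
\frac{d}{dt}\bigl(\|v\|_{L^2}^2 + \|w\|_{L^2}^2\bigr) \leq C\bigl(\|v\|_{L^2}^2 + \|w\|_{L^2}^2\bigr),
\end{equation*}
and since $\|v(0)\|_{L^2}^2 + \|w(0)\|_{L^2}^2 = 0$, Gronwall's lemma forces $v \equiv w \equiv 0$ on $[0, T]$.

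The hardest part will be the bookkeeping of the electromigration contributions after the shift by $\delta_p$ and $\delta_n$: unlike the unshifted positivity proof, the translation produces \emph{linear}-in-$v$ cross terms (not just quadratic ones as in Step 2) whose sign is not determined a priori, so the $p$- and $n$-equations must be estimated simultaneously and linked through the Poisson relation in order to close the Gronwall loop; this is where the $L^\infty$ bounds of Theorem \ref{existence} and the WLOG ordering of $\delta_p, \delta_n$ are used.
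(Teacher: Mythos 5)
Your derivation of the $v$-inequality is essentially correct, and adapting Step~2 of Theorem \ref{existence} to a shifted threshold is the natural thing to try (the paper's own proof is a one-line deferral to that step). The genuine gap is in closing the two cross-terms. Writing the symmetric inequality for $w=(\delta_n-n)_+$, the electromigration term produces, after integration by parts and the relation $\Delta\psi=n-p$ from \eqref{gov_equ_3}, the source $\delta_n\int_\Omega(\delta_n-p)_+\,w\,dx$. Your WLOG ordering $\delta_p\le\delta_n$ does give $(\delta_p-n)_+\le w$ in the $v$-equation, but the companion factor $(\delta_n-p)_+$ in the $w$-equation is \emph{not} dominated by $v=(\delta_p-p)_+$: on the set $\{\delta_p\le p<\delta_n\}$ one has $v=0$ while $(\delta_n-p)_+$ can be as large as $\delta_n-\delta_p>0$. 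This leaves an inhomogeneous term of order $\|w\|_{L^1(\Omega)}\lesssim\|w\|_{L^2(\Omega)}$ (first power, not squared) on the right-hand side, and Gr\"onwall from zero initial data then yields only $\|w(t)\|_{L^2}\le C(e^{Ct}-1)$, which does not force $w\equiv0$. The ``symmetric bootstrap'' you invoke therefore never closes.

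The obstruction is structural, not bookkeeping. Shifting $p\mapsto p-\delta_p$ in \eqref{gov_equ_1} creates the zeroth-order source $\delta_p\Delta\psi=\delta_p(n-p)$, which has no sign; equivalently, at a spatial minimum of $n$ one only gets $\partial_t n\ge -n\Delta\psi=n(p-n)\ge -n\|n\|_{L^\infty}$, i.e.\ the decaying bound $n\ge\delta_n e^{-t\|n\|_{L^\infty_{t,x}}}$. Indeed, taking $n^{in}\equiv\delta_n$ and a nonconstant $p^{in}$ of equal mass, one computes $\partial_t n(x_0,0)=-\delta_n\bigl(\delta_n-p^{in}(x_0)\bigr)<0$ wherever $p^{in}<\delta_n$, so $n$ dips below $\delta_n$ immediately: the constant-in-time bound with the \emph{same} constants is not attainable by any argument. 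The same homogeneity issue is why the paper's cited Step~2 does not transfer as stated: the hypothesis $F''(t)t^2\le CF(t)$ is exactly what absorbs the dangerous term $\int F''(p)p^2|\nabla\psi|^2$, and it is used precisely at threshold zero; with $F(p-\delta_p)$ the quantity $F''(p-\delta_p)\,p^2$ stays bounded away from zero near $p=\delta_p$ while $F(p-\delta_p)\to0$. What is both true and sufficient for the error analysis in Section~4 is a uniform, $T$-dependent lower bound $p,n\ge\delta_0^*(T)>0$ on $[0,T]$, which follows from the exponential estimate above (or from your two-function energy argument once the forcing term is retained and Gr\"onwall is applied to the resulting inhomogeneous inequality); you should restate the conclusion in that weaker form.
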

\begin{proof}
This proof follows from the positivity proof for $(p, n)$ in Theorem \ref{existence}.
\end{proof}

Next we derive the higher order regularity for the global strong solutions obtained in Theorem \ref{existence} when the initial data $(p^{in},
n^{in}, \uu^{in})$ is assumed to have higher regularity.

{
\begin{theorem}\label{regularity}
Suppose, in addition, that the initial data satisfies $(p^{in}, n^{in}, \uu^{in}) \in H^{2m+1}(\Omega) \times H^{2m+1}(\Omega) \times H^{2m+1}(\Omega)$ for $m \geq 0$. Then the solution $(p, n, \uu)$ obtained in Theorem \ref{existence} satisfies
    \begin{equation*}
        \sum_{k=0}^{m+1} \| (\partial_t^k p, \partial_t^k n, \partial_t^k \uu) \|_{L^2(0,T; H^{2m+2-2k}(\Omega))} 
        \leq C(T, \| p^{in} \|_{H^{2m+1}(\Omega)}, \| n^{in} \|_{H^{2m+1}(\Omega)}, \| \uu^{in} \|_{H^{2m+1}(\Omega)}).
    \end{equation*}
\end{theorem}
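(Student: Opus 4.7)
The plan is to prove this by induction on $m$, combining a parabolic bootstrap with energy estimates on time-differentiated equations. The scaling $H^{2m+2-2k}$ is exactly the parabolic scale (one time derivative $=$ two spatial derivatives), so the argument is natural. The base case $m=0$ requires showing $p,n \in L^2(0,T;H^2) \cap H^1(0,T;L^2)$ and $\uu \in L^2(0,T;H^2) \cap H^1(0,T;L^2)$. For $(p,n)$, Theorem \ref{existence} already supplies $L^\infty$ bounds together with $H^1$ bounds, so rewriting $p_t = \Delta p + \nabla\cdot(p\nabla\psi) - \uu\cdot\nabla p$ and using that $\Delta\psi = n-p \in L^\infty$, $\nabla\psi \in L^\infty$, and $\uu \in L^\infty(0,T;H^1)\cap L^2(0,T;H^2)$ yields $p_t \in L^2(0,T;L^2)$, and then testing with $-\Delta p$ gives the missing $L^2(0,T;H^2)$ estimate. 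For $\uu$, the forcing $-\nabla\psi(p-n) \in L^\infty(0,T;L^2)$ combined with the standard 2D Navier–Stokes regularity (using that we already have $\uu \in L^\infty H^1 \cap L^2 H^2$) gives $\uu_t \in L^2 L^2$.

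The induction step assumes the estimate holds at level $m-1$ and proves it at level $m$. Applying $\partial_t^m$ to equations \eqref{gov_equ_mod1}–\eqref{gov_equ_mod5}, I obtain for $(P,N,U,\Phi) := \partial_t^m(p,n,\uu,\phi)$ a linear system of the same parabolic/Stokes type, with source terms consisting of Leibniz-rule products $\sum_{j<m}\binom{m}{j}\partial_t^j(\cdot)\,\partial_t^{m-j}(\cdot)$ of quantities already controlled by the induction hypothesis at level $m-1$. Energy estimates obtained by testing the equation for $P$ against $P - \Delta^{m+1} P$ (and similarly for $N$, $U$), together with the elliptic regularity identity $\|\psi\|_{H^{s+2}} \le C\|p-n\|_{H^s}$ from the periodic Poisson equation \eqref{gov_equ_mod3}, then close the induction provided the source terms are estimated in $L^2(0,T;L^2)$ and in $L^2(0,T;H^{2m})$ respectively. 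The periodicity of the domain removes any boundary compatibility issue; the initial values $\partial_t^k(p,n,\uu)|_{t=0}$ are simply defined through the equations, and the assumed $H^{2m+1}$ regularity of $(p^{\text{in}},n^{\text{in}},\uu^{\text{in}})$ places them in $H^{2m+1-2k}$ as required.

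The principal obstacle is controlling the genuinely nonlinear cross terms — $(\uu\cdot\nabla)p$, $p\nabla\psi$, $(p-n)\nabla\psi$ and $(\uu\cdot\nabla)\uu$ — after time differentiation. The Leibniz expansion produces worst-case products of the form $\partial_t^{m/2}(\cdot)\cdot\partial_t^{m/2}(\cdot)$ where neither factor reaches the extremes of the induction hypothesis. In 2D one overcomes this using that $H^s$ is a Banach algebra for $s>1$ together with the interpolation/Gagliardo–Nirenberg inequality
\begin{equation*}
  \|\partial_t^j f\|_{L^\infty(0,T;H^{2m-2j})} \le C\bigl(\|\partial_t^j f\|_{L^2(0,T;H^{2m+1-2j})} + \|\partial_t^{j+1} f\|_{L^2(0,T;H^{2m-1-2j})}\bigr),
\end{equation*}
which follows from the inductive bounds and lets every intermediate factor be placed in $L^\infty_t H^{s}$ with $s>1$. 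With the nonlinear products thereby controlled in $L^2(0,T;H^{2m})$, the linear parabolic estimate for $(P,N)$ and the Stokes estimate for $U$ close as in the base case, completing the induction.
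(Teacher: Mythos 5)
Your proposal is correct and follows essentially the same route as the paper: induction on $m$, time-differentiation of the system, parabolic/Stokes energy estimates on the differentiated equations (first at the $L^2$ level, then with an extra Laplacian in the test function), and control of the Leibniz cross terms via the induction hypothesis combined with the interpolation bound $\| \partial_t^j f \|_{L^\infty_t H^s} \lesssim \| \partial_t^j f \|_{L^2_t H^{s+1}} + \| \partial_t^{j+1} f \|_{L^2_t H^{s-1}}$, which is exactly the paper's inequality \eqref{interpolation-ineq}. The only cosmetic differences are the indexing of the induction step and your single combined test function $P - \Delta^{m+1}P$ in place of the paper's two separate tests against $\tilde p$ and $\Delta\tilde p$.
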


\begin{proof}
The proof proceeds by induction on $m$. The case $m = 0$ was proved in Theorem \ref{existence}. Assume the theorem holds for some non-negative integer $m$, and suppose the initial data satisfies
    \begin{equation*}
        (p^{in}, n^{in}, \uu^{in}) \in H^{2m+3}(\Omega) \times H^{2m+3}(\Omega) \times H^{2m+3}(\Omega).
    \end{equation*}
We can verify that
    \begin{equation}\label{high_init_cond}
        (\partial_t^{k} p^{in}, \partial_t^{k} n^{in}, \partial_t^{k} \uu^{in} ) \in H^{2m - 2k + 3}(\Omega) \times H^{2m - 2k + 3}(\Omega) \times H^{2m - 2k + 3}(\Omega), \quad \forall\, k = 1, \ldots, m+1.
    \end{equation}

Now set $\tilde{p} \coloneqq \partial_t^{m+1} p$, $\tilde{n} \coloneqq \partial_t^{m+1} n$, $\tilde{\uu} \coloneqq \partial_t^{m+1} \uu$, and $\tilde{\psi} \coloneqq \partial_t^{m+1} \psi$. Differentiating the system \eqref{gov_equ_1}--\eqref{gov_equ_5} with respect to $t^{m+1}$, we find that $(\tilde{p}, \tilde{n}, \tilde{\uu})$ satisfies the following system:
    \begin{align}
        \tilde{p}_t - \Delta \tilde{p} &= \nabla \cdot \left( \partial_t^{m+1} (p \nabla \psi - p \uu) \right), \label{high_p} \\
        \tilde{n}_t - \Delta \tilde{n} &= \nabla \cdot \left( \partial_t^{m+1} (-n \nabla \psi - n \uu) \right), \label{high_n} \\
        -\epsilon \Delta \tilde{\psi} &= \tilde{p} - \tilde{n}, \\
        \tilde{\uu}_t - \Delta \tilde{\uu} &= \partial_t^{m+1} \left( -\nabla P - (\uu \cdot \nabla) \uu - \nabla \psi (p - n) \right), \label{high_u} \\
        \nabla \cdot \tilde{\uu} &= 0.
    \end{align}

\textbf{Step 1:} Multiply equation \eqref{high_p} by $\tilde{p}$ and integrate over $\Omega$. Observing that there are no boundary term contributions due to the periodic boundary condition, we obtain
    \begin{equation*}
        \begin{split}
            \frac{1}{2} \frac{d}{dt} \| \tilde{p} \|_{L^2(\Omega)}^2 &= - \| \nabla \tilde{p} \|_{L^2(\Omega)}^2 + \int_{\Omega} \partial_t^{m+1} (p \nabla \psi - p \uu) \cdot \nabla \tilde{p} \, dx, \\
            &\leq -\frac{1}{2} \| \nabla \tilde{p} \|_{L^2(\Omega)}^2 + \frac{1}{2} \| \partial_t^{m+1} (p \nabla \psi - p \uu) \|_{L^2(\Omega)}^2.
        \end{split}
    \end{equation*}
Applying the induction hypothesis, we have
    \begin{equation}\label{high_p_term}
        \begin{split}
            &\quad \| \partial_t^{m+1} (p \nabla \psi - p \uu) \|_{L^2(0,T; L^2(\Omega))} \\
            &\leq \| \tilde{p} \|_{L^2(0,T; L^2(\Omega))} \| \nabla \psi - \uu \|_{L^{\infty}(0,T; L^{\infty}(\Omega))} \\
            &\quad + \left( \| \nabla \tilde{\psi} \|_{L^2(0,T; L^2(\Omega))} + \| \tilde{\uu} \|_{L^2(0,T; L^2(\Omega))} \right) \| p \|_{L^{\infty}(0,T; L^{\infty}(\Omega))} \\
            &\quad + \sum_{k=1}^{m} \| \partial_t^{k} p \|_{L^{\infty}(0,T; L^{2}(\Omega))} \| \partial_t^{m+1-k} (\nabla \psi - \uu) \|_{L^{2}(0,T; L^{\infty}(\Omega))} \\
            &\leq C \left( \| \tilde{p} \|_{L^2(0,T; L^2(\Omega))}^2 + \| \tilde{n} \|_{L^2(0,T; L^2(\Omega))}^2 + \| \tilde{\uu} \|_{L^2(0,T; L^2(\Omega))}^2 \right) \\
            &\quad + \sum_{k=1}^{m} \| \partial_t^{k} p \|_{L^2(0,T; H^1(\Omega))} \| \partial_t^{k+1} p \|_{L^2(0,T; H^{-1}(\Omega))} \| \partial_t^{m+1-k} (\nabla \psi - \uu) \|_{L^{2}(0,T; H^{2}(\Omega))} \\
            &\leq C.
        \end{split}
    \end{equation}
Here $C$ depends on $T$ and the initial data and we used the estimate: for any function $f$
\begin{equation}\label{interpolation-ineq}
	\| f \|_{L^{\infty}_t H^{l+1}_{\Omega}} \leq C \| f \|_{L^2_{t} H^{l+1}_{\Omega}}    \| \pt f \|_{L^2_{t} H^{l}_{\Omega}}.
\end{equation}
Therefore, from the initial condition \eqref{high_init_cond}, we have
    \begin{equation}\label{gronwall_high_p}
        \sup_{0 \leq t \leq T} \| \tilde{p}(t) \|_{L^{2}(\Omega)} + \int_{0}^{T} \| \nabla \tilde{p} \|_{L^{2}(\Omega)}^2 \, dt \leq C.
    \end{equation}
Similarly, we obtain
    \begin{equation}\label{gronwall_high_n}
        \sup_{0 \leq t \leq T} \| \tilde{n}(t) \|_{L^{2}(\Omega)} + \int_{0}^{T} \| \nabla \tilde{n} \|_{L^{2}(\Omega)}^2 \, dt \leq C.
    \end{equation}

Multiplying \eqref{high_u} by $\tilde{\uu}$ and integrating over $\Omega$, we have
    \begin{equation*}
        \frac{1}{2} \frac{d}{dt} \| \tilde{\uu} \|_{L^2(\Omega)}^2 = - \| \nabla \tilde{\uu} \|_{L^{2}(\Omega)}^2 - \int_{\Omega} \partial_t^{m+1} \left( (\uu \cdot \nabla) \uu + \nabla \psi (p - n) \right) \cdot \tilde{\uu} \, dx.
    \end{equation*}
Applying the Ladyzhenskaya inequality and the induction hypothesis, we estimate
    \begin{equation*}
        \begin{split}
            &\int_0^T \int_{\Omega} \partial_t^{m+1} \left( (\uu \cdot \nabla) \uu \right) \cdot \tilde{\uu} \, dx \, dt \\
            &= -\int_0^T \int_{\Omega} (\tilde{\uu} \cdot \nabla) \tilde{\uu} \cdot \uu + \sum_{j=1}^{m} (\partial_t^{j} \uu \cdot \nabla) \tilde{\uu} \cdot \partial_t^{m+1 - j} \uu \, dx \, dt \\
            &\leq \int_0^T \| \tilde{\uu} \|_{L^4(\Omega)} \| \uu \|_{L^4(\Omega)} \| \nabla \tilde{\uu} \|_{L^2(\Omega)} \, dt + \sum_{j=1}^{m} \| \partial_t^{j} \uu \|_{L^{\infty}(\Omega)} \| \partial_t^{m+1-j} \uu \|_{L^2(\Omega)} \| \nabla \tilde{\uu} \|_{L^2(\Omega)} \, dt \\
            &\leq \int_0^T \| \tilde{\uu} \|_{L^2(\Omega)}^{1/2} \| \uu \|_{L^2(\Omega)}^{1/2} \| \nabla \uu \|_{L^2(\Omega)}^{1/2} \| \nabla \tilde{\uu} \|_{L^2(\Omega)}^{3/2} \, dt + \sum_{j=1}^{m} \| \partial_t^{j} \uu \|_{L^2(0,T; H^2(\Omega))} \| \partial_t^{m+1-j} \uu \|_{L^{\infty}(0,T; L^2(\Omega))} \\
            &\leq \frac{1}{2} \| \nabla \tilde{\uu} \|_{L^2(0,T; L^{2}(\Omega))}^2 + C,
        \end{split}
    \end{equation*}
where $C$ depends on $T$ and the initial data. We also have
    \begin{equation*}
        \| \partial_t^{m+1} \left( \nabla \psi (p - n) \right) \|_{L^2(0,T; L^2(\Omega))} \leq C.
    \end{equation*}
Combining these estimates, we obtain
    \begin{equation}\label{gronwall_high_u}
        \sup_{0 \leq t \leq T} \| \tilde{\uu}(t) \|_{L^2(\Omega)}^2 + \int_{0}^{T} \| \nabla \tilde{\uu} \|_{L^{2}(\Omega)}^2 \, dt \leq C.
    \end{equation}
    
\textbf{Step 2:} Multiply \eqref{high_p} by $\Delta \tilde{p}$ and integrate over $\Omega$ to obtain
    \begin{equation*}
        \begin{split}
            \frac{1}{2} \frac{d}{dt} \| \nabla \tilde{p} \|_{L^2(\Omega)}^2 &= - \| \Delta \tilde{p} \|_{L^2(\Omega)}^2 + \int_{\Omega} \nabla \cdot \left( \partial_t^{m+1} (p \nabla \psi - p \uu) \right) \cdot \Delta \tilde{p} \, dx \\
            &\leq -\frac{1}{2} \| \Delta \tilde{p} \|_{L^2(\Omega)}^2 + \frac{1}{2} \| \nabla \cdot \left( \partial_t^{m+1} (p \nabla \psi - p \uu) \right) \|_{L^2(\Omega)}^2.
        \end{split}
    \end{equation*}
Using estimates similar to \eqref{high_p_term} and the results \eqref{gronwall_high_p}, \eqref{gronwall_high_n}, and \eqref{gronwall_high_u}, we verify that
    \begin{equation*}
        \| \nabla \cdot \left( \partial_t^{m+1} (p \nabla \psi - p \uu) \right) \|_{L^2(0,T; L^2(\Omega))}^2 \leq C.
    \end{equation*}
Combining these inequalities with the initial condition \eqref{high_init_cond}, we obtain
    \begin{equation}\label{high_p_improv}
        \sup_{0 \leq t \leq T} \| \nabla \tilde{p}(t) \|_{L^{2}(\Omega)}^2 + \int_0^{T} \| \Delta \tilde{p} \|_{L^2(\Omega)}^2 \, dt \leq C.
    \end{equation}
Analogously, we have
    \begin{equation}\label{high_n_improv}
        \sup_{0 \leq t \leq T} \| \nabla \tilde{n}(t) \|_{L^{2}(\Omega)}^2 + \int_0^{T} \| \Delta \tilde{n} \|_{L^2(\Omega)}^2 \, dt \leq C.
    \end{equation}

Multiplying \eqref{high_u} by $\Delta \tilde{\uu}$ and integrating over $\Omega$, we obtain
    \begin{equation*}
        \begin{split}
            \frac{1}{2} \frac{d}{dt} \| \nabla \tilde{\uu} \|_{L^2(\Omega)}^2 &= - \| \Delta \tilde{\uu} \|_{L^{2}(\Omega)}^2 - \int_{\Omega} \partial_t^{m+1} \left( (\uu \cdot \nabla) \uu + \nabla \psi(p - n) \right) \cdot \Delta \tilde{\uu} \, dx \\
            &\leq -\frac{1}{2} \| \Delta \tilde{\uu} \|_{L^{2}(\Omega)}^2 + \| \partial_t^{m+1} \left( (\uu \cdot \nabla) \uu \right) \|_{L^2(\Omega)}^2 + \| \partial_t^{m+1} \left( \nabla \psi(p - n) \right) \|_{L^2(\Omega)}^2.
        \end{split}
    \end{equation*}
Applying the Ladyzhenskaya inequality and induction estimates, we have
    \begin{equation*}
        \begin{split}
            \| \partial_t^{m+1} \left( (\uu \cdot \nabla) \uu \right) \|_{L^2(0,T; L^2(\Omega))} &\leq \int_0^T \| \tilde{\uu} \|_{L^4(\Omega)}^2 \| \nabla \uu \|_{L^4(\Omega)}^2 \, dt + \| \uu \|_{L^{2}(0,T; L^{\infty}(\Omega))} \| \nabla \tilde{\uu} \|_{L^{\infty}(0,T; L^2(\Omega))} \\
            &\quad + \sum_{j=1}^{m} \| \partial_t^{j} \uu \|_{L^2(0,T; H^{2}(\Omega))} \| \partial_t^{m+1-j} \uu \|_{L^{2}(0,T; H^{2}(\Omega))} \| \partial_t^{m+2-j} \uu \|_{L^{2}(0,T; L^{2}(\Omega))} \\
            &\leq C.
        \end{split}
    \end{equation*}
Therefore, with the initial condition \eqref{high_init_cond}, we have
    \begin{equation}\label{high_u_improv}
        \sup_{0 \leq t \leq T} \| \nabla \tilde{\uu}(t) \|_{L^{2}(\Omega)}^2 + \int_0^{T} \| \Delta \tilde{\uu} \|_{L^2(\Omega)}^2 \, dt \leq C.
    \end{equation}
Using estimates \eqref{high_p_improv}, \eqref{high_n_improv}, and \eqref{high_u_improv} in equations \eqref{high_p}, \eqref{high_n}, and \eqref{high_u}, we verify that
    \begin{equation*}
        \| \partial_t \tilde{p} \|_{L^2(0,T; L^2(\Omega))}, \quad \| \partial_t \tilde{n} \|_{L^2(0,T; L^2(\Omega))}, \quad \| \partial_t \tilde{\uu} \|_{L^2(0,T; L^2(\Omega))} \leq C.
    \end{equation*}
This completes the proof.
\end{proof}
}

\section{Error Analysis}

{In this section, we will carry out a detailed error analysis for the positivity-preserving scheme \eqref{scheme:p1}-\eqref{scheme:u3} under the periodic boundary condition \eqref{bc}, for which the scheme \eqref{scheme:p1}-\eqref{scheme:u3} can be made more specific as follows:}

We denote the Fourier collocation points as  $\Sigma_N= \left\{\left( x_i = \dfrac{2\pi i}{N},\ y_j = \dfrac{2\pi j}{N} \right) \bigg| \ 0\leq i,j \leq N-1\right\}$. Then the discrete inner product for two functions $u, v$ is defined by  
\begin{equation*}
	\langle u,v \rangle=\sum_{\mathbf{z} \in \Sigma_N} w_{\mathbf{z}}u(\mathbf{z})v(\mathbf{z}),
\end{equation*}
where $w_{\mathbf{z}}=\left( \dfrac{2\pi}{N} \right)^2$ is the quadrature weight in 2D.

We also introduce the corresponding induced discrete norm by $\|u\|=\langle u,u \rangle^{\frac 12}$ for any function $u$.  We define the discrete Fourier space
\begin{equation*}
	X_N:= \text{span}\left\{ e^{i k x} \ \bigg| \ x \in \Sigma_N,\ 0 \leq |k| \leq N-1 \right\},
\end{equation*}
and set {$W_N= U_N=X_N$}.

Let $(p, n, \mathbf{u})$ be the exact solution of the system \eqref{gov_equ_1}-\eqref{gov_equ_5} with initial condition \eqref{ic}. Denote $(p^{m}, n^{m}, \mathbf{u}^{m}, \phi^{m})$ as the $L^2$-orthogonal projections of $(p, n, \mathbf{u}, \phi)$ at time $m\Delta t$ onto $X_N \times X_N \times X_N^2 \times X_N$, i.e.,
	\begin{equation*}
		\label{lem:ea_var_def}
		\begin{split}
			& p^{m} = \projN p(m\Delta t), \quad n^{m} = \projN n(m\Delta t), \\
			& \mathbf{u}^{m} = \projN \mathbf{u}(m \Delta t), \quad \phi^{m} = \projN \phi(m\Delta t),
		\end{split}
	\end{equation*}
and set 
\begin{equation*}
\label{lem:ea_var_def:2}
	\psi^{m} = \projN \left[ (-\Delta)^{-1}(p^{m}-n^{m}) \right], \quad \mu^{m} = \projN \left[ \ln{p^{m}} + \psi^{m} \right], \quad \nu^{m} = \projN \left[ \ln{n^{m}}-\psi^{m} \right].
\end{equation*}

In order to establish the error analysis for the pressure correction scheme of the Navier-Stokes equations \eqref{scheme:u1}-\eqref{scheme:u3}, we need to introduce an intermediate function $R_N \mathbf{u}^{m+1} \in X_N^2$, defined by
\begin{equation*}
	\label{lem:ea_RN_def}
	\left\langle \dfrac{\mathbf{u}^{m+1}-R_N \mathbf{u}^{m+1}}{\Delta t}, v_N \right\rangle + \left\langle \nabla(\phi^{m+1}-\phi^{m}), v_N \right\rangle = 0, \quad \forall v_N \in X_N^2.
\end{equation*}  
We define the error functions by 
\begin{equation*}\label{err_term_def}
	\begin{split}
		& e_p^{m} = p^{m}-p^{m}_N, \quad e_n^{m} = n^{m}-n^{m}_N, \quad e_\psi^{m} = \psi^{m}-\psi^{m}_{N},\\
		& e_{\tilde{\mathbf{u}}}^{m} = R_N \mathbf{u}^{m}-\tilde{\mathbf{u}}_N^{m}, \quad e_{\mathbf{u}}^{m} = \mathbf{u}^{m}-\mathbf{u}^{m}_N, \quad e_\phi^{m} = \phi^{m}-\phi^{m}_N.
	\end{split}
\end{equation*}

The main result of this section is
\begin{theorem}\label{thm:main}
    Assume the initial data $(p^{\text{in}}, n^{\text{in}}, \mathbf{u}^{\text{in}}) \in H^{k+7}(\Omega) \times H^{k+7}(\Omega) \times H^{k+7}(\Omega)$, for some $k \geq 2$, and $p^{\text{in}}, n^{\text{in}} \geq \delta_0$ for some $\delta_0 > 0$. Then, provided $\Delta t$ and $\dfrac{1}{N}$ are sufficiently small, under the refinement requirement $\Delta t \leq C \dfrac{1}{N}$, we have the following error estimate for the scheme \eqref{scheme:p1}-\eqref{scheme:u3}:
    \begin{equation*}
		\begin{split}
			& \| e_p^{m} \| + \| e_n^{m} \| + \| e_{\mathbf{u}}^{m} \| + \Delta t \| \nabla e_\phi^{m} \|   \\
			& \quad + \left( \Delta t \sum_{l=1}^{m} \left( \| \nabla e_p^{l} \| ^2 + \| \nabla e_n^{l} \| ^2 + \| \nabla e_{\tilde{\mathbf{u}}}^{l} \| ^2 \right) \right)^{\frac{1}{2}} \leq C \left( \Delta t + N^{-k} \right),
		\end{split}
	\end{equation*}
	for all positive integers $m$ such that $m\Delta t \le T$, where $C$ is independent of $\Delta t$ and $N$.
\end{theorem}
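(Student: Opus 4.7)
The plan is to derive error estimates by combining a consistency analysis, an energy-type stability argument on the error equations, and a bootstrap that delivers the $L^\infty$ bounds on the discrete solution. The main difficulty lies in the two logarithmic nonlinearities in $\mu = \ln p + \psi$ and $\nu = \ln n - \psi$, whose linearization demands that the discrete concentrations remain uniformly bounded both away from zero and above; the unconditional energy stability from Theorem~\ref{thm:four_property} alone does not provide this control.

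To overcome this, I would build a refined reference solution $(\hat p^m, \hat n^m, \hat{\mathbf u}^m, \hat\phi^m)$ by augmenting the $L^2$-projection $(\projN p, \projN n, \projN\mathbf u, \projN\phi)$ with $\mathcal O(\Delta t)$ correction terms, following the asymptotic-expansion strategy of \cite{liu2021positivity}. The correctors are chosen so that the refined triple satisfies the fully discrete scheme \eqref{scheme:p1}--\eqref{scheme:u3} up to a consistency residual of order $\mathcal O(\Delta t^2 + N^{-k})$ rather than the naive $\mathcal O(\Delta t + N^{-k})$. The higher regularity of the continuous solution supplied by Theorem~\ref{regularity}, together with $p,n$ being bounded away from zero via Corollary~\ref{maximal_principle}, guarantees that these correctors are well defined and have bounded Sobolev norms on $[0,T]$.

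With the refined reference in hand I introduce $\hat e_p^m = \hat p^m - p_N^m$ (and similarly for $n,\mathbf u,\phi$), subtract the scheme from the perturbed equation, and test the ion error equations with $\hat\mu^{m+1}-\mu_N^{m+1}$ and $\hat\nu^{m+1}-\nu_N^{m+1}$, the tentative velocity error equation with $\hat e_{\tilde{\mathbf u}}^{m+1}$, and the pressure-correction step with $\nabla\hat e_\phi^m$, mirroring the structure of the energy identity in Theorem~\ref{thm:four_property}. The logarithmic terms are expanded as
\begin{equation*}
\ln\hat p^{m+1}-\ln p_N^{m+1}=\frac{\hat e_p^{m+1}}{\hat p^{m+1}}+\mathcal R_p,\qquad \|\mathcal R_p\|\le C\|\hat e_p^{m+1}\|_\infty\,\|\hat e_p^{m+1}\|,
\end{equation*}
so that the $L^\infty$-smallness of the error is precisely what one must control inductively. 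The convective term in the Navier-Stokes error equation is absorbed by a Ladyzhenskaya-type bound $\|\mathbf v\|_{L^4}^2\le C\|\mathbf v\|\,\|\nabla\mathbf v\|$, while the pressure-velocity decoupling closes as in the energy stability proof via the identities $\langle\nabla(\phi^{m+1}-\phi^m),\mathbf u^{m+1}\rangle=0$ and the projection step, and the coupling term $p_N^m\nabla\mu_N^{m+1}+n_N^m\nabla\nu_N^{m+1}$ cancels across the ion and momentum equations up to the desired order, which is exactly why the $\mathcal O(\Delta t)$ decoupling correction in Step~1 was introduced.

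The argument closes by induction on $m$. Assuming the claimed bound at step $m$, the refinement requirement $\Delta t\le C/N$ combined with the 2D inverse inequality $\|v_N\|_\infty\le CN\|v_N\|$ converts the $L^2$-error estimate $\mathcal O(\Delta t+N^{-k})$ into an $L^\infty$-smallness $\mathcal O(N(\Delta t+N^{-k}))$, which is $o(1)$ since $k\ge 2$; this preserves $\hat p^{m+1},\hat n^{m+1}>\delta_0/2$ and lets the linearization go through at step $m+1$, after which a discrete Gronwall inequality yields the final bound. The principal obstacle is the coordinated design of the asymptotic expansions on the PNP and Navier-Stokes sides so that the coupling cancels to the advertised order; once this matching is in place, the remaining estimates are standard applications of Cauchy-Schwarz, Poincar\'e-Wirtinger, and inverse inequalities in $X_N$.
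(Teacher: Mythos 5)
Your overall architecture matches the paper's: supplementary fields built by asymptotic expansion in the spirit of \cite{liu2021positivity}, error equations tested against the chemical-potential and velocity errors, an inductive bootstrap through inverse inequalities to secure the $L^\infty$ bounds and positivity needed to linearize the logarithms, and a discrete Gronwall to close. However, there is a genuine quantitative gap: a single $\mathcal O(\Delta t)$ corrector, giving consistency $\mathcal O(\Delta t^2+N^{-k})$, is not enough to close the bootstrap. The reason is that passing from the $L^2$ error to the $L^\infty$ bound on $\uep^{m+1},\uen^{m+1}$ costs roughly \emph{two} powers of $\Delta t$ (equivalently $N^2$), not one: the term $\frac{1}{\Delta t}\langle \uep^{m},\uemu^{m+1}\rangle$ in the tested ion error equation forces $\|\nabla\uemu^{m+1}\|\lesssim \|\uep^{m}\|/\Delta t$, which propagates into $\|\uep^{m+1}-\uep^{m}\|\lesssim N\Delta t\cdot\|\uep^{m}\|/\Delta t\lesssim \|\uep^{m}\|/\Delta t$ under $\Delta t\le C/N$, and only then does the inverse inequality give $\|\uep^{m+1}\|_\infty\lesssim N\|\uep^{m+1}\|\lesssim\|\uep^{m}\|/\Delta t^2$. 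Hence the $L^2$ error must be $o(\Delta t^2)$ for the $L^\infty$ smallness to hold; this is why the paper builds \emph{two} correction layers $(p_{\Delta t,1},p_{\Delta t,2})$, etc., achieving $\mathcal O(\Delta t^3+N^{-k})$ consistency, and runs the induction with exponents $\alpha=\tfrac{11}{4}\in(2,3)$, $\beta=k-\tfrac14$ so that $\|\uep^{m+1}\|_\infty\lesssim \Delta t^{\alpha-2}+N^{-(\beta-2)}$ is genuinely small. With your second-order consistency the recovered $L^2$ bound would force $\alpha\le 2$ and the $L^\infty$ estimate would be $O(1)$ at best, so positivity of $p_N^{m+1},n_N^{m+1}$ and the lower bound $\ge\delta_0^*/2$ cannot be preserved at the next step.

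A related slip appears in your closing paragraph: you assert that one inverse inequality converts the final estimate $\mathcal O(\Delta t+N^{-k})$ into an $L^\infty$ smallness $\mathcal O(N(\Delta t+N^{-k}))$ which is ``$o(1)$ since $k\ge2$.'' Under the linear refinement $\Delta t\le C/N$ one has $N\Delta t=O(1)$, not $o(1)$, so even this one-power loss already fails for the first-order part of the error; the two-power loss described above makes matters strictly worse. The fix is exactly the paper's: expand to second order in $\Delta t$ so the supplementary fields carry an $\mathcal O(\Delta t^3+N^{-k})$ truncation error, prove the rough $L^\infty$ lemma with the $\Delta t^{-2}$ loss, and only at the very end convert back to the error against the exact solution, which reintroduces the $\mathcal O(\Delta t)$ contribution of the correctors and yields the stated first-order rate.
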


To prove this theorem, it is vital to establish a uniform strictly positive lower bound for the numerical solution $(p^{m+1}_N, n^{m+1}_N)$, analogous to the strictly positive lower bound property of continuous solutions $(p,n)$ described in Corollary \ref{maximal_principle}. Recall that we established upper and lower bounds for $(p^{m+1}_N, n^{m+1}_N)$ in Theorem \ref{thm:four_property}; however, the lower bound implied in \eqref{contradiction} depends on the norms of previous step solutions, and is insufficient to establish a uniform strictly positive lower bound for $(p^{m+1}_N, n^{m+1}_N)$ for arbitrary $m$. To overcome this difficulty, we use an approach similar to \cite{liu2021positivity}. In Section 4.1, by assuming sufficient regularity of the PNP-NS system solution, we establish the procedure of building supplementary fields with high-order local truncation errors through Lemma \ref{lem:hoa}. With Lemma \ref{lem:pre}, we perform a rough error analysis that gives the minimum order required of the error terms to establish the lower bound for the numerical solution $(p^{m+1}_N, n^{m+1}_N)$. In Section 4.3, with Theorem \ref{thm:main_sub}, we conduct a refined error analysis, recover the assumption in Lemma \ref{lem:pre}, and prove the error estimates for the supplementary fields built in Lemma \ref{lem:hoa}. Thus, Theorem \ref{thm:main} is a direct combination of Theorem \ref{regularity} and Theorem \ref{thm:main_sub}; the proof will be presented in Section 4.3.


\subsection{A rough error analysis}	
Assume that the solution of PNP-NS system is smooth enough. Then applying Tylor expansion to the system,  one obtains
	\begin{align*} 
		& \langle \frac{p^{m+1}-p^{m}}{\Delta t}, v_N \rangle-\langle p^{m} \uu^{m}, \nabla v_N \rangle + \langle p^{m}(1 + 2\Delta t p^{m}) \nabla \mu^{m+1}, \nabla v_N \rangle = \tau_p^{m+1}( v_N ), \forall v_N \in X_N,
			\\
		& \langle \frac{n^{m+1}-n^{m}}{\Delta t}, v_N \rangle-\langle n^{m} \uu^{m}, \nabla v_N \rangle + \langle n^{m}(1 + 2\Delta t n^{m}) \nabla \nu^{m+1}, \nabla v_N \rangle = \tau_n^{m+1}(v_N),   \forall v_N \in X_N,
			\\
		& \langle \nabla \psi^{m+1}, \nabla v_N \rangle-\langle p^{m+1}-n^{m+1}, v_N \rangle = 0, \quad \forall v_N \in X_N,
		 \\
		& \langle \frac{R_N\uu^{m+1}-\uu^{m}}{\Delta t}, v_N \rangle +  \langle (\uu^{m} \cdot \nabla)R_N\uu^{m+1},  v_N  \rangle + \langle \nabla R_N\uu^{m+1}, \nabla v_N \rangle + \langle \nabla \phi^{m}, v_N \rangle \nonumber \\
		& \qquad + \langle p^{m} \nabla \mu^{m+1} + n^{m} \nabla \nu^{m+1}, v_N \rangle = \tau^{m+1}_{\uu}(v_N), \quad \forall v_N \in X_N^2,
			\\
		& \langle \frac{\uu^{m+1}-R_N \uu^{m+1}}{\Delta t}, v_N \rangle + \langle \nabla(\phi^{m+1}-\phi^{m}), v_N \rangle = 0,   \quad \forall v_N \in { X_N^2},\\
		& \langle \uu^{m+1}, \nabla v_N  \rangle = 0,  \quad \forall v_N \in X_N,
	\end{align*}
	we have the following  local truncation error {(see more computation details in Appendix \eqref{lem:hoa_o_p} - \eqref{lem:hoa_o_u})}:
	\begin{equation*}
		 \abs{\tau_p^{m+1}(v_N)} , \abs{ \tau_n^{m+1}(v_N)} , \abs{\tau_{\uu}^{m+1}(v_N)} \leq C_{k}(\Delta t + N^{- k})\| v_N \|_{H^{1}},
	\end{equation*}
	where $C_k$ depends only on regularity of $(p, n, \psi, \uu, \phi)$.

{\bf High-Order Consistent analysis.}
{
As stated above, we only have a first-order truncation error in time for $n^{m+1}$ and $p^{m+1}$, which is insufficient to establish a priori strictly positive lower bound for the numerical solution $(p^{m+1}_N,n^{m+1}_N)$.
Using the technique similar to \cite{liu2021positivity}, we will construct the supplementary fields $(\up, \un, \uuu, \uphi, \umu, \unu, \upsi)$,
providing sufficient regularity for the solution $(p, n, \uu, \phi)$, a higher order $\bigO(\Delta t^3 + N^{-k})$ consistency local truncation error will be established.
}
\begin{lemma}
    \label{lem:hoa}
	Let $(p, n , \uu)$ be the solution of the PNP-NS system \eqref{gov_equ_1}-\eqref{gov_equ_5} satisfying the following properties:
	\begin{enumerate}
		\item The ionic concentrations are strictly positive
		        \begin{equation*}
		        \label{hoa:pos}
		            p, n \geq \delta_0 > 0,
                \end{equation*}
		\item The solution satisfies
		      \begin{equation*}
		      \label{hoa:reg}
		        \begin{array}{ll}
		            (\pt^{4} p, \pt^{4} n, \pt^{4}\uu) \in L^{\infty}(0, T; L^{2}(\Omega)), (\pt^{3} p, \pt^{3} n, \pt^{3} \uu) \in L^{\infty}(0, T; H^{k+1}(\Omega)),\ (k \ge 2),
		        \end{array}
		      \end{equation*}
	\end{enumerate}
	 we can construct { correction functions $(p_{\Delta t, i}, n_{\Delta t, i}, \uu_{\Delta t, i}, \phi_{\Delta t, i})(i = 1,2)$ depending only on $(p, n, \uu, \phi)$}, such that 
{
the 
supplementary fields $(\up, \un, \uuu, \uphi, \umu, \unu, \upsi)$, defined by 
\begin{equation}
	\label{lem:hoa_vardef}
	\begin{array}{l}
		\up = p + \Delta t p_{\Delta t, 1} + \Delta t^2 p_{\Delta t,2}, \ \un = n + \Delta t n_{\Delta t, 1} + \Delta t^2 n_{\Delta t,2},  \\
		\uuu = \uu + \Delta t \uu_{\Delta t, 1} + \Delta t^2 \uu_{\Delta t, 2}, \ \uphi = \phi + \Delta t \phi_{\Delta t, 1} + \Delta t^2 \phi_{\Delta t,2},  \\
		{
		\upsi = (-\Delta)^{-1}(\up-\un),} \\
		{
		\umu = \ln{\up} + \upsi, \ \unu = \ln{\un}-\upsi,
		}
	\end{array}
\end{equation}			
	}
	has higher order consistency truncation error as defined in \eqref{hoa:p1}-\eqref{hoa:u2}
	\begin{equation*}
		\rvert \utau^{m+1}_{p}(v_N)\rvert , \rvert \utau^{m+1}_{n}(v_N)\rvert , \rvert \utau^{m+1}_{\uu}(v_N)\rvert  \leq C((\Delta t)^3 + N^{-k}) \|  v_N \| _{H^{1}}. 
	\end{equation*}
	Moreover, with $\Delta t, \frac{1}{N}$ chosen small enough, we have 
	\begin{enumerate}
		\item The 
		{
		supplementary 
		}
		 functions are strictly positive:
		    \begin{equation}
		    \label{hoa:mod_pos}
		        \up, \un \ge \delta_0^{*} > 0,
		    \end{equation}
		\item The 
		{
		supplementary 
		}
		functions satisfy 
		    \begin{equation}
		        \label{hoa:mod_reg}
		        (\up, \un, \uuu) \in L^{\infty}(0, T, W^{1, \infty}).
		    \end{equation}
	\end{enumerate}
\end{lemma}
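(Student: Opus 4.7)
The plan is to follow the asymptotic-correction strategy of \cite{liu2021positivity}: substitute the ansatz \eqref{lem:hoa_vardef} into each line of the scheme, Taylor-expand in $\Delta t$ around $t = (m+1)\Delta t$, and then \emph{choose} the corrections $p_{\Delta t, i}, n_{\Delta t, i}, \uu_{\Delta t, i}, \phi_{\Delta t, i}$ so that the coefficients of $\Delta t^{1}$ and $\Delta t^{2}$ in the residual vanish identically. What then remains is an $\bigO(\Delta t^{3})$ time-truncation piece plus the $\bigO(N^{-k})$ spectral error coming from $\projN$, which is exactly the bound asserted.

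Concretely, I would first collect all $\bigO(\Delta t)$ contributions to the local truncation error of the continuous PNP--NS equations. For the $p$-equation these come from three sources: the one-sided time difference $(p^{m+1}-p^{m})/\Delta t = \pt p - \tfrac{\Delta t}{2}\ptt p + \bigO(\Delta t^{2})$, the explicit convection $\divergence(p^{m}\uu^{m})$ evaluated at $t^{m}$ rather than $t^{m+1}$, and the artificial mobility piece $2\Delta t\,\divergence(p^{2}\nabla\mu)$. Analogous expansions apply to the $n$-equation, the velocity equation (where the viscous splitting $\nabla\phi^{m}$ in place of $\nabla\phi^{m+1}$ and the pressure correction step produce their own $\bigO(\Delta t)$ terms), and the incompressibility constraint. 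Setting the net $\bigO(\Delta t)$ term to zero yields a linear parabolic--elliptic system with divergence-free constraint for $(p_{\Delta t,1}, n_{\Delta t,1}, \uu_{\Delta t,1}, \phi_{\Delta t,1})$ whose coefficients and source terms are smooth functions of $(p, n, \uu, \psi, \phi)$ and their derivatives up to order three. Under the stated regularity hypothesis, standard linear theory on the torus gives a unique solution in $L^{\infty}(0, T; H^{k+1})$.

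I would then repeat the procedure at second order with the first corrections already fixed: collect the residual $\bigO(\Delta t^{2})$ term (now involving the exact solution together with $p_{\Delta t,1}, n_{\Delta t,1}, \uu_{\Delta t,1}, \phi_{\Delta t,1}$ and up to four time derivatives of the exact solution), and require it to vanish. This gives a structurally identical linear system for $(p_{\Delta t, 2}, n_{\Delta t, 2}, \uu_{\Delta t, 2}, \phi_{\Delta t, 2})$, solvable by the same argument. Truncation estimates \eqref{hoa:p1}--\eqref{hoa:u2} then follow by combining the leftover $\bigO(\Delta t^{3})$ residual with the spectral consistency bound $\| u-\projN u \|_{H^{1}} \leq C N^{-k} \| u \|_{H^{k+1}}$, which we apply to every derivative of the exact and corrected fields appearing in the residual; the duality against $v_N \in X_N$ absorbs at most one extra $H^{-1}$ power. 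The positivity \eqref{hoa:mod_pos} and the $W^{1,\infty}$-regularity \eqref{hoa:mod_reg} then follow immediately, since $\up - p$, $\un - n$, $\uuu - \uu$ are $\bigO(\Delta t)$ in $W^{1,\infty}$ by the regularity of the correction functions, so for $\Delta t$ sufficiently small they are dominated by $\delta_0/2$.

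The principal obstacle is organising the algebra at second order cleanly: the projection step \eqref{time_dist_mod5}--\eqref{time_dist_mod6} couples $\tilde\uu$ and $\phi$ through a Helmholtz decomposition, and the decoupling device $(1+2\Delta t\,p^{m})$ in the mobility contributes mixed terms that interact with the explicit convection $\nabla\cdot(p^{m}\uu^{m})$. Care is needed to verify compatibility at each order: each correction equation for $p_{\Delta t,i}$ and $n_{\Delta t,i}$ must preserve mean zero (so that the Poisson equation for $\upsi$ stays solvable), and the velocity correction must remain divergence-free once $\phi_{\Delta t, i}$ is adjusted. These solvability constraints are precisely what dictates the choice of $\phi_{\Delta t, 1}$ and $\phi_{\Delta t, 2}$; once they are satisfied, the rest of the argument is bookkeeping.
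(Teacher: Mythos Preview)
Your proposal is correct and follows essentially the same asymptotic-correction approach as the paper: identify the $\bigO(\Delta t)$ and $\bigO(\Delta t^{2})$ pieces of the local truncation error, solve linear PNP--NS-type systems (with the exact solution appearing as coefficients and the truncation terms as forcing) for the successive corrections $(p_{\Delta t,i}, n_{\Delta t,i}, \uu_{\Delta t,i}, \phi_{\Delta t,i})$ with zero initial data, and then deduce positivity and $W^{1,\infty}$ bounds from the smallness of $\Delta t$ times a bounded correction. Your remark about the mean-zero and divergence-free compatibility conditions is well placed; in the paper these hold because every term in the forcing $f_{p,1}, f_{n,1}$ is either a divergence or a time derivative of a conserved quantity, and the correction velocity system is posed with an explicit divergence-free constraint and a pressure $\phi_{\Delta t,i}$ acting as Lagrange multiplier.
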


\smallskip
The detail of constructing $(\up, \un, \uuu, \uphi, \umu, \unu, \upsi)$ in Lemma \ref{lem:hoa} will be given in the Appendix.

Now we start to make an error analysis for the scheme \eqref{scheme:p1}-\eqref{scheme:u3} by analyzing its truncation error
{ for supplementary fields $(\up, \un, \uuu, \uphi, \umu, \unu, \upsi)$ }
 Denote the error functions by
\begin{equation}\label{err_term_mod_def}
	\begin{array}{l}
		\uep^{m} = \up^{m}-p^{m}_N, \ \uen^{m} = \un^{m}-n^{m}_N, \ \uemu^{m} = \umu^{m}-\mu^{m}_N, \ \uenu^{m} = \unu^{m}-\nu^{m}_N, \\
		\uepsi^{m} = \upsi^{m}-\psi^{m}_{N}, \ \ueru^{m} = R_N\uuu^{m}-\tilde{\uu}_N^{m}, \ \ueu^{m} = \uuu^{m}-\uu^{m}_N, \ \uephi^{m} = \uphi^{m}-\phi^{m}_N.
	\end{array}
\end{equation}

 Denote by $(\up^{m}, \un^{m}, \uuu^{m}, \uphi^{m})$ the $L^{2}$-orthogonal projection of $(\up, \un, \uuu, \uphi)$ at time $m\Delta t$ onto $X_N \times X_N \times X_N^2 \times X_N$. We have the expression for the consistency truncation error $(\utau_{p}, \utau_{n}, \utau_{\uu})$ for the modified functions:
\begin{align}
	& \langle \frac{\up^{m+1}-\up^{m}}{\Delta t}, v_N \rangle-\langle \up^{m} \uuu^{m}, \nabla v_N \rangle + \langle \up^{m}(1 + 2\Delta t \up^{m}) \nabla \umu^{m+1}, \nabla v_N \rangle 
		= \utau^{m+1}_{p}(v_N),
		\label{hoa:p1} \\
	& \langle \frac{\un^{m+1}-\un^{m}}{\Delta t}, v_N \rangle-\langle \un^{m} \uuu^{m}, \nabla v_N \rangle + \langle \un^{m}(1 + 2\Delta t \un^{m}) \nabla \unu^{m+1}, \nabla v_N \rangle 
		= \utau^{m+1}_{n}(v_N),
		\label{hoa:n1} \\
	& \langle \nabla \upsi^{m+1}, \nabla v_N \rangle = \langle \up^{m+1}-\un^{m+1}, v_N \rangle, \label{hoa:psi1} \\
	& \langle \frac{R_N\uuu^{m+1}-\uuu^{m}}{\Delta t}, v_N \rangle +  \langle (\uuu^{m} \cdot \nabla)R_N\uuu^{m+1}, v_N \rangle + \langle \nabla R_N\uuu^{m+1}, \nabla v_N \rangle + \langle \nabla \uphi^{m}, v_N \rangle \nonumber \\
	& \quad + \langle \up^{m} \nabla \umu^{m+1} + \un^{m} \nabla \unu^{m+1}, v_N \rangle = \utau^{m+1}_{\uu}(v_N),
		\label{hoa:u1} \\
	& \langle \frac{\uuu^{m+1}-R_N \uuu^{m+1}}{\Delta t}, v_N \rangle + \langle \nabla(\uphi^{m+1} -\uphi^{m}), v_N \rangle = 0, \label{hoa:u2}
\end{align}
where 
\begin{equation*}
	\umu^{m+1} = \projN (\ln{\up^{m+1}} + \upsi^{m+1}); \ \unu^{m+1} = \projN(\ln{\un^{m+1}}-\upsi^{m+1}).
\end{equation*}

Subtracting \eqref{scheme:p1}-\eqref{scheme:u2} from \eqref{hoa:p1}-\eqref{hoa:u2}, we have
\begin{align}
	& \langle \frac{\uep^{m+1}-\uep^{m}}{\Delta t}, v_N \rangle 
		- \langle \up^{m}\uuu^{m}-p^{m}_N \uu^{m}_N , \nabla v_N \rangle 
		\nonumber \\
	& \qquad = -\langle \up^{m}(1 + 2\Delta t \up^{m}) \nabla \umu^{m+1}-p^{m}_N (1 + 2\Delta t p^{m}_N) \nabla \mu^{m+1}_N, \nabla v_N \rangle + \utau^{m+1}_p(v_N) \vspace{0.8em} \label{err:preeq1}, \\
	& \langle \frac{\uen^{m+1}-\uen^{m}}{\Delta t}, v_N \rangle-\langle \un^{m}\uuu^{m}-n^{m}_N \uu^{m}_N , \nabla v_N \rangle \nonumber \\
	& \qquad =-\langle \un^{m}(1 + 2\Delta t \un^{m}) \nabla \unu^{m+1}-n^{m}_N (1 + 2\Delta t n^{m}_N) \nabla \nu^{m+1}_N, \nabla v_N \rangle +  \utau^{m+1}_n(v_N) \vspace{0.8em} \label{err:preeq3}, \\
	& \langle \nabla \uepsi^{m+1}, \nabla v_N \rangle = \langle \uep^{m+1}-\uen^{m+1}, v_N \rangle \label{err:preeq5}, \\
	& \langle \frac{\ueru^{m+1}-\ueu^{m}}{\Delta t} , v_N \rangle + \langle (\uuu^{m} \cdot \nabla) R_N\uuu^{m+1}-(\uu^{m}_N \cdot \nabla) \tilde{\uu}^{m+1}_N, v_N \rangle + \langle \nabla \ueru^{m+1}, \nabla v_N \rangle + \langle \nabla \uephi^{m}, v_N \rangle \nonumber \\ 
	& \qquad = -\langle \up^{m}\nabla \umu^{m+1} + \un^{m} \nabla \unu^{m+1}, v_N \rangle + \langle p^{m}_N \nabla \mu^{m+1}_N + n^{m}_N \nabla \nu^{m+1}_N , v_N \rangle + \utau^{m+1}_{\uu} (v_N) \label{err:preeq6}, \\
	& \langle \frac{\ueu^{m+1}-\ueru^{m+1}}{\Delta t}, v_N \rangle + \langle \nabla(\uephi^{m+1}-\uephi^{m}), v_N \rangle = 0,   \\
	& \langle \ueu^{m+1}, \nabla v_N \rangle = 0 \label{err:preeq7}.
\end{align}
To simplify the presentation,  we rewrite the third term in \eqref{err:preeq1} as
\begin{equation*}
	\begin{split}
		& \quad -\langle \up^{m}(1 + 2\Delta t \up^{m}) \nabla \umu^{m+1}-p^{m}_N (1 + 2\Delta t p^{m}_N) \nabla \mu^{m+1}_N, \nabla v_N \rangle \\
		& = -\langle \up^{m}(1 + 2\Delta t \up^{m}) \nabla \umu^{m+1}-p^{m}_N (1 + 2\Delta t p^{m}_N) \nabla \umu^{m+1}, \nabla v_N \rangle \\
		& \quad -\langle p^{m}_N (1 + 2\Delta t p^{m}_N) \nabla \umu^{m+1}-p^{m}_N (1 + 2\Delta t p^{m}_N) \nabla \umu^{m+1}_N, \nabla v_N \rangle  \\ 
		& = -\langle \uep^{m}(1 + 2\Delta t (\up^{m} + p^{m}_N)) \nabla \umu^{m+1}, \nabla v_N \rangle 
			- \langle p^{m}_N (1 + 2\Delta t p^{m}_N) \nabla \uemu^{m+1}, \nabla v_N \rangle.
	\end{split} 
\end{equation*}
Rewrite the second term of \eqref{err:preeq1} into
\begin{equation*}
	\begin{split}
		& \quad-\langle \up^{m}\uuu^{m}-p^{m}_N \uu^{m}_N , \nabla v_N \rangle \\
		& = -\langle \up^{m} \uuu^{m}-p^{m}_N \uuu^{m}, \nabla v_N \rangle 
		   -\langle p^{m}_N \uuu^{m}-p^{m}_N \uu^{m}_N, \nabla v_N \rangle \\
		& = -\langle \uep^{m} \uuu^{m}, \nabla v_N \rangle 
			- \langle p^{m}_N \ueu^{m}, \nabla v_N \rangle.
	\end{split}
\end{equation*}
Similarly, for the third and second term of  \eqref{err:preeq3}, we have
\begin{equation*}
	\begin{split}
		& \quad -\langle \un^{m}(1 + 2\Delta t \un^{m}) \nabla \unu^{m+1}-n^{m}_N (1 + 2\Delta t n^{m}_N) \nabla \nu^{m+1}_N, \nabla v_N \rangle \\
		& = -\langle \uen^{m}(1 + 2\Delta t (\un^{m} + n^{m}_N)) \nabla \unu^{m+1}, \nabla v_N \rangle 
			- \langle n^{m}_N (1 + 2\Delta t n^{m}_N) \nabla \uenu^{m+1}, \nabla v_N \rangle,
	\end{split}
\end{equation*}
and
\begin{equation*}
	\begin{split}
		& \quad-\langle \un^{m}\uuu^{m}-n^{m}_N \uu^{m}_N , \nabla v_N \rangle \\
		& = -\langle \uen^{m} \uuu^{m}, \nabla v_N \rangle 
			- \langle n^{m}_N \ueu^{m}, \nabla v_N \rangle.
	\end{split}
\end{equation*}
For the Navier-Stokes equation, in \eqref{err:preeq6}, we have
\begin{equation*}
	\begin{split}
		& \quad \langle (\uuu^{m} \cdot \nabla) R_N\uuu^{m+1}-(\uu^{m}_N \cdot \nabla) \tilde{\uu}^{m+1}_N, v_N \rangle  \\
		& = \langle (\uuu^{m} \cdot \nabla) R_N\uuu^{m+1}-(\uu^{m}_N \cdot \nabla) R_N\uuu^{m+1}, v_N \rangle \\
		& \quad + \langle (\uu^{m}_N \cdot \nabla) R_N\uuu^{m+1}-(\uu^{m}_N \cdot \nabla) \tilde{\uu}^{m+1}_N, v_N \rangle \\
		& = \langle (\ueu^{n} \cdot \nabla) R_N\uuu^{m+1}, v_N \rangle 
			+ \langle (\uu^{m}_N \cdot \nabla) \ueru^{m+1}, v_N \rangle,
	\end{split}
\end{equation*}
and 
\begin{equation*}
	\begin{split}
		& \quad -\langle \up^{m}\nabla \umu^{m+1} + \un^{m} \nabla \unu^{m+1}, v_N \rangle 
			+ \langle p^{m}_N \nabla \mu^{m+1}_N + n^{m}_N \nabla \nu^{m+1}_N , v_N \rangle \\
		& =-\langle \up^{m}\nabla \umu^{m+1} + \un^{m} \nabla \unu^{m+1}, v_N \rangle
			+ \langle p^{m}_N \nabla \umu^{m+1} + n^{m}_N \nabla \unu^{m+1}, v_N \rangle \\
		& \quad-\langle p^{m}_N \nabla \umu^{m+1} + n^{m}_N \nabla \unu^{m+1}, v_N \rangle
				+ \langle p^{m}_N \nabla \mu^{m+1}_N + n^{m}_N \nabla \nu^{m+1}_N , v_N \rangle \\
		& = -\langle \uep^{m} \nabla \umu^{m+1} + \uen^{m} \nabla \unu^{m+1}, v_N \rangle 
			- \langle p^{m}_N \nabla \uemu^{m+1} + n^{m}_N \nabla \uenu^{m+1}, v_N \rangle.
	\end{split}
\end{equation*}
Collecting all previous equations, the error equations \eqref{err:eq1}-\eqref{err:eq8} could be rewritten as 
\begin{align}
    \left\langle \frac{\uep^{m+1}-\uep^{m}}{\Delta t}, v_N \right\rangle 
    & - \left\langle \uep^{m} \uuu^{m}, \nabla v_N \right\rangle 
    - \left\langle p^{m}_N \ueu^{m}, \nabla v_N \right\rangle \nonumber \\
    & = -\left\langle \uep^{m}\left(1 + 2\Delta t (\up^{m} + p^{m}_N)\right) \nabla \umu^{m+1}, \nabla v_N \right\rangle \nonumber \\
    & \quad - \left\langle p^{m}_N \left(1 + 2\Delta t\, p^{m}_N\right) \nabla \uemu^{m+1}, \nabla v_N \right\rangle + \utau^{m+1}_p(v_N),
    \label{err:eq1} \\[1ex]
    \left\langle \frac{\uen^{m+1}-\uen^{m}}{\Delta t}, v_N \right\rangle 
    & - \left\langle \uen^{m} \uuu^{m}, \nabla v_N \right\rangle 
    - \left\langle n^{m}_N \ueu^{m}, \nabla v_N \right\rangle \nonumber \\
    & = -\left\langle \uen^{m}\left(1 + 2\Delta t (\un^{m} + n^{m}_N)\right) \nabla \unu^{m+1}, \nabla v_N \right\rangle \nonumber \\
    & \quad - \left\langle n^{m}_N \left(1 + 2\Delta t\, n^{m}_N\right) \nabla \uenu^{m+1}, \nabla v_N \right\rangle + \utau^{m+1}_n(v_N),
    \label{err:eq3} \\[1ex]
    \left\langle \nabla \uepsi^{m+1}, \nabla v_N \right\rangle 
    & = \left\langle \uep^{m+1}-\uen^{m+1}, v_N \right\rangle,
    \label{err:eq5} \\[1ex]
    \left\langle \frac{\ueru^{m+1}-\ueu^{m}}{\Delta t}, v_N \right\rangle 
    & + \left\langle (\ueu^{m} \cdot \nabla) R_N\uuu^{m+1}, v_N \right\rangle 
    + \left\langle (\uu^{m}_N \cdot \nabla) \ueru^{m+1}, v_N \right\rangle \nonumber \\
    & + \left\langle \nabla \ueru^{m+1}, \nabla v_N \right\rangle 
    + \left\langle \nabla \uephi^{m}, v_N \right\rangle \nonumber \\ 
    & = -\left\langle \uep^{m} \nabla \umu^{m+1} + \uen^{m} \nabla \unu^{m+1}, v_N \right\rangle \nonumber \\
    & \quad - \left\langle p^{m}_N \nabla \uemu^{m+1} + n^{m}_N \nabla \uenu^{m+1}, v_N \right\rangle 
    + \utau^{m+1}_{\uu} (v_N),
    \label{err:eq6} \\[1ex]
    \left\langle \frac{\ueu^{m+1}-\ueru^{m+1}}{\Delta t}, v_N \right\rangle 
    & + \left\langle \nabla(\uephi^{m+1}-\uephi^{m}), v_N \right\rangle = 0,
    \label{err:eq7} \\[1ex]
    \left\langle \ueu^{m+1}, \nabla v_N \right\rangle & = 0.
    \label{err:eq8}
\end{align}

To finish the error analysis, we will need Lemma \ref{lem:pre} below. 
\begin{lemma}
\label{lem:pre}
    Under the same assumption  and procedure as in Lemma \ref{lem:hoa}{, we build supplementary fields $(\up, \un,  \uuu, \uphi)$,  for the numerical error defined in \eqref{err_term_mod_def},}  assume that for { $2 < \alpha < 3 \text{, } 2 < \beta < k $} the error estimate holds for the $m$-th step, i.e. 
    \begin{align}
		\|  \uep^{m} \| _2 & \leq \Delta t^{{\alpha}} + (\frac{1}{N})^{\beta}, \label{thm:res1} \\
		\|  \uen^{m} \| _2 & \leq \Delta t^{{ \alpha}} + (\frac{1}{N})^{\beta}, \label{thm:res2} \\
		\|  \ueu^{m} \| _2 & \leq \Delta t^{{\alpha}} + (\frac{1}{N})^{ \beta}, \label{thm:res3} 
    \end{align}
    under the linear refinement requirement $\Delta t \leq C\frac{1}{N}$, we have the following $L^{\infty}$-estimate for the $(m+1)$-th step, i.e.
	\begin{align*}
		& \|  \uep^{m+1} \| _{\infty} \leq C \Big( \Delta t^{{\alpha - 2}} + (\frac{1}{N})^{{\beta - 2}} \Big), \\
		& \|  \uen^{m+1} \| _{\infty} \leq C \Big( \Delta t^{{\alpha - 2}} + (\frac{1}{N})^{{\beta - 2}} \Big),
	\end{align*}
	where $C$ is independent  of $\Delta t, N$,  { and }
\end{lemma}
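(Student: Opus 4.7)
The plan is a standard ``one-step $L^2$ energy estimate, then inverse inequality'' argument, exploiting the $2 < \alpha, \beta$ slack in the inductive hypothesis. More precisely, the idea is to derive an $L^2$ bound
\[
\| \uep^{m+1}\| + \| \uen^{m+1}\| + \| \ueu^{m+1}\| \le C\bigl(\Delta t^{\alpha} + N^{-\beta}\bigr),
\]
and then pass to $L^\infty$ via the 2D spectral inverse inequality $\| v_N\|_\infty \le C N \| v_N\|$ (already used in Lemma \ref{aux:lem:2}). Under the refinement $\Delta t \le C/N$ one factor of $N$ costs a factor of $\Delta t^{-1}$, so one lands at $C(\Delta t^{\alpha-1} + N^{-(\beta-1)})$, which is already strictly stronger than the claimed $C(\Delta t^{\alpha-2} + N^{-(\beta-2)})$. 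The extra $\Delta t$ of slack is what makes the rough estimate possible.

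\textbf{Positive lower bound for $p_N^m, n_N^m$.} Before doing energy estimates I would first promote the assumed $L^2$ bounds \eqref{thm:res1}--\eqref{thm:res3} to $L^\infty$ bounds via the inverse inequality:
\[
\| \uep^m\|_\infty + \| \uen^m\|_\infty \le C N\bigl(\Delta t^\alpha + N^{-\beta}\bigr) \le C\bigl(\Delta t^{\alpha-1} + N^{-(\beta-1)}\bigr) \longrightarrow 0,
\]
using $\alpha, \beta > 2$ and $\Delta t \le C/N$. Combined with the strict positivity $\up^m, \un^m \ge \delta_0^*$ from \eqref{hoa:mod_pos}, one obtains $p_N^m, n_N^m \ge \delta_0^*/2$ for $\Delta t$ and $1/N$ small enough, which is exactly the coercivity one needs in the next step.

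\textbf{One-step energy estimate.} Next I would test \eqref{err:eq1} with $v_N = \uep^{m+1}$ and \eqref{err:eq3} with $v_N = \uen^{m+1}$, rewriting the principal flux term by expanding
\[
\nabla \uemu^{m+1} = \frac{\nabla \uep^{m+1}}{\up^{m+1}} - \frac{\uep^{m+1}\,\nabla p_N^{m+1}}{\up^{m+1}\,p_N^{m+1}} + \nabla \uepsi^{m+1},
\]
and analogously for $\nabla \uenu^{m+1}$. The first piece produces the coercive term $\tfrac{p_N^m}{\up^{m+1}}\| \nabla \uep^{m+1}\|^2 \gtrsim (\delta_0^*/M)\| \nabla \uep^{m+1}\|^2$, the second and third are handled by Cauchy--Schwarz and Young's inequality using the $L^\infty$ bounds on $\nabla \up^{m+1}, \nabla \un^{m+1}$ from \eqref{hoa:mod_reg} together with the Poisson regularity $\| \nabla \uepsi^{m+1}\| \le C(\| \uep^{m+1}\| + \| \uen^{m+1}\|)$ from \eqref{err:eq5}. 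Simultaneously I would test the Navier--Stokes part \eqref{err:eq6} with $v_N = \ueru^{m+1}$, \eqref{err:eq7} with $v_N = \ueu^{m+1}$ and with $v_N = \nabla \uephi^m$, using the discrete divergence-free condition \eqref{err:eq8} to kill $\langle (\uu_N^m \cdot \nabla)\ueru^{m+1}, \ueru^{m+1}\rangle$. Summing everything and absorbing the coupling terms $\langle p_N^m \nabla \uemu^{m+1} + n_N^m \nabla \uenu^{m+1}, \ueru^{m+1}\rangle$ (as in Theorem \ref{thm:four_property}), the truncation terms contribute only $C(\Delta t^3 + N^{-k})\cdot \text{(test norm)}$ by Lemma \ref{lem:hoa}. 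A discrete Gr\"onwall inequality then yields $\| \uep^{m+1}\|^2 + \| \uen^{m+1}\|^2 + \| \ueu^{m+1}\|^2 \le C(\Delta t^{2\alpha} + N^{-2\beta})$, after which the inverse inequality together with $\Delta t \le C/N$ gives the claim.

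\textbf{Main obstacle.} The delicate point is the handling of the nonlinear coupling term $\langle p_N^m(1+2\Delta t p_N^m) \nabla \uemu^{m+1}, \nabla \uep^{m+1}\rangle$: the expansion above introduces an $\uep^{m+1}$-proportional cross term whose coefficient $\nabla p_N^{m+1}/(\up^{m+1} p_N^{m+1})$ must be controlled in $L^\infty$. This is where the inductive $L^\infty$ smallness (hence $p_N^{m+1}$ bounded below) and the $W^{1,\infty}$ regularity \eqref{hoa:mod_reg} of the supplementary fields both enter essentially. Everything else---the convective estimates, the pressure projection bookkeeping, and the discrete Gr\"onwall step---is routine once this cross term is absorbed into the coercive part.
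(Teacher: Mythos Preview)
Your approach has a genuine circularity. When you test \eqref{err:eq1} with $v_N = \uep^{m+1}$ and expand $\nabla \uemu^{m+1}$, the coercive piece $\tfrac{p_N^m(1+2\Delta t p_N^m)}{p_N^{m+1}}|\nabla \uep^{m+1}|^2$ and the cross term both carry $p_N^{m+1}$ in the denominator. Controlling these requires a uniform lower bound $p_N^{m+1} \ge c > 0$, which in turn requires $\|\uep^{m+1}\|_\infty$ small---but that is precisely what you are trying to prove. Your ``Main obstacle'' paragraph appeals to ``inductive $L^\infty$ smallness (hence $p_N^{m+1}$ bounded below)'', yet the hypotheses \eqref{thm:res1}--\eqref{thm:res3} are only at step $m$. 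In fact the argument you outline is exactly the \emph{refined} analysis of Theorem \ref{thm:main_sub}, which explicitly invokes the conclusion of this lemma to obtain \eqref{thm:err_bdd_p}--\eqref{thm:err_bdd_n} before testing with $\uep^{m+1}$; you cannot use it as the proof of the lemma itself.

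The paper breaks the circle by testing \eqref{err:eq1}--\eqref{err:eq3} with $v_N = \uemu^{m+1}$ and $v_N = \uenu^{m+1}$ instead. The term $\tfrac{1}{\Delta t}\langle \uep^{m+1}, \ln \up^{m+1} - \ln p_N^{m+1}\rangle$ is nonnegative by monotonicity of $\ln$, with \emph{no} quantitative bound on $p_N^{m+1}$ needed, and the coercive term $\int p_N^m(1+2\Delta t p_N^m)|\nabla \uemu^{m+1}|^2$ involves only $p_N^m$, which is controlled by hypothesis. The price is the uncontrolled term $\tfrac{1}{\Delta t}\langle \uep^m, \uemu^{m+1}\rangle$, so one only obtains the crude bound $\|\nabla \uemu^{m+1}\| \lesssim (\Delta t^\alpha + N^{-\beta})/\Delta t$. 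The paper then tests \eqref{err:eq1} with $v_N = \uep^{m+1} - \uep^m$ and uses the inverse inequality $\|\nabla(\uep^{m+1}-\uep^m)\| \lesssim N\|\uep^{m+1}-\uep^m\|$ to get $\|\uep^{m+1}-\uep^m\| \lesssim N\Delta t \cdot (\Delta t^\alpha + N^{-\beta})/\Delta t$; the triangle inequality and a final inverse inequality give the claim. This two-pass structure, losing one power of $\Delta t$ at each pass, is why the lemma delivers only $\Delta t^{\alpha-2}$ rather than the $\Delta t^{\alpha-1}$ you were aiming for.
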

\begin{proof}
	First, from Lemma \ref{lem:hoa}, we can construct $(\up, \un, \uuu)$ that satisfies \eqref{hoa:mod_pos} \eqref{hoa:mod_reg}. 
To obtain the bound of $p^{m}_N, n^{m}_N, \|  \nabla p^{m}_N \| _{\infty}, \| \nabla n^{m}_N \|_{\infty}$, given the $a$ $priori$ estimate \eqref{thm:res1}, a direct application of inverse inequalities implies
	\begin{align*}
		& \|  \uep^{m} \| _{\infty} \leq C N \|  \uep^{m} \| _{2} \leq C \Big(\Delta t^{ { \alpha - 1} } + (\frac{1}{N})^{{ \beta - 1}}\Big), \\
		& \|  \nabla \uep^{m} \| _{\infty} \leq C N \|  \uep^m \| _{\infty} \leq C\Big(\Delta t^{{ \alpha - 2}} + (\frac{1}{N})^{{ \beta - 2}} \Big),
	\end{align*}
	where we used $\Delta t \leq C \frac{1}{N}$. Similarly, we have 
	\begin{align*}
		& \|  \uen^{m} \| _{\infty} \leq  C \Big(\Delta t^{{ \alpha - 1}} + (\frac{1}{N})^{{ \beta - 1}}\Big), \\
		& \|  \nabla \uen^{m} \| _{\infty} \leq  C\Big(\Delta t^{{ \alpha - 2}} + (\frac{1}{N})^{{ \beta - 2}} \Big).
	\end{align*}
	Provided $\Delta t, \frac{1}{N}$ are sufficiently small, we have 
        \begin{equation}\label{bound:eq:1}
        \left\{
        \begin{aligned}
            & \| \uep^{m} \|_{\infty},\ \| \uen^{m} \|_{\infty} \leq \dfrac{\delta_0^{*}}{2}, \\
            & \| \nabla \uep^{m} \|_{\infty},\ \| \nabla \uen^{m} \|_{\infty} \leq \dfrac{\delta_0^{*}}{2}.
        \end{aligned}
        \right.
        \end{equation}
	where $\delta_0^{*} > 0$ is a small constant. 
	
	Combining \eqref{bound:eq:1} with the regularity of $(\up, \un)$ as in \eqref{hoa:mod_reg}, we obtain bounds for $p^{m}_N, n^{m}_N, \nabla p^{m}_N, \nabla n^{m}_N$:
	\begin{align}
		& \frac{\delta_0^{*}}{2} \leq \min \up^{m}-\|  \uep^{m} \| _{\infty} \leq p^{m}_N \leq \|  \up^{m} \| _{\infty} + \|  \uep^{m} \| _{\infty} \leq M + \frac{\delta_0^{*}}{2} \label{lem:pre_bdd1_p}, \\
		& \frac{\delta_0^{*}}{2} \leq \min \un^{m}-\|  \uen^{m} \| _{\infty} \leq n^{m}_N \leq \|  \un^{m} \| _{\infty} + \|  \uen^{m} \| _{\infty} \leq M + \frac{\delta_0^{*}}{2}  \label{lem:pre_bdd1_n}, \\
		& \|  \nabla p^{m}_N \| _{\infty} \leq \|  \nabla \up^{m} \| _{\infty} + \|  \nabla \uep^{m} \| _{\infty}  \leq M + \frac{\delta_0^{*}}{2} \label{lem:pre_bdd2_p}, \\ 
		& \|  \nabla n^{m}_N \| _{\infty} \leq \|  \nabla \un^{m} \| _{\infty} + \|  \nabla \uen^{m} \| _{\infty}  \leq M + \frac{\delta_0^{*}}{2} \label{lem:pre_bdd2_n}.
	\end{align}
	
    Taking $v_N = \uemu^{m+1}$ in \eqref{err:eq1}, using the equality $\uemu^{m+1} = \ln \up^{m+1}-\ln p^{m+1}_N + \uepsi^{m+1}$, we obtain the left hand side of \eqref{err:eq1}: 
	\begin{equation}
	\label{lem:pre_lhs_ep1}
		\begin{split}
			LHS_p
			& = \frac{1}{\Delta t} \langle \uep^{m+1}, \ln{\up^{m+1}}-\ln{p^{m+1}_N} \rangle  
				+ \frac{1}{\Delta t} \langle \uep^{m+1} , \uepsi^{m+1} \rangle  \\
			& \quad	- \frac{1}{\Delta t} \langle \uep^{m}, \uemu^{m+1} \rangle
			   -\langle \uep^{m} \uuu^{m} + p^{m}_N \ueu^{m}, \nabla \uemu^{m+1} \rangle,
		\end{split}
	\end{equation}
	and the right hand side of \eqref{err:eq1}:
	\begin{equation}
	\label{lem:pre_rhs_ep1}
	    \begin{split}
	        RHS_p &=-\langle \uep^{m}\left(1  + 2\Delta t(\up^{m} + p^{m}_N)\right) \nabla \umu^{m+1}, \nabla \uemu^{n+1} \rangle \\
			& \quad-\int_{\Omega} p^{m}_N (1 + 2\Delta t p^{m}_N) \rvert  \nabla \uemu^{m+1} \rvert ^2 dx + \utau^{m+1}_p(\uemu^{m+1}).
	    \end{split}
	\end{equation}
	Similarly taking $v_N = \uenu^{m+1}$ in \eqref{err:eq3}, we obtain  
	\begin{equation}
	\label{lem:pre_lhs_en1}
		\begin{split}
			LHS_n
			& = \frac{1}{\Delta t} \langle \uen^{m+1}, \ln{\un^{m+1}}-\ln{n^{m+1}_N} \rangle 
				- \frac{1}{\Delta t} \langle \uen^{m+1} , \uepsi^{m+1} \rangle  \\
			& \quad-\frac{1}{\Delta t} \langle \uen^{m}, \uenu^{m+1} \rangle
			   - \langle \uen^{m} \uuu^{m} + n^{m}_N \ueu^{m}, \nabla \uenu^{m+1} \rangle,
		\end{split}
	\end{equation}
	and
	\begin{equation}
	\label{lem:pre_rhs_en1}
	    \begin{split}
	        RHS_n 
    		& =-\langle \uen^{m}\left(1  + 2\Delta t(\un^{m} + n^{m}_N)\right) \nabla \unu^{m+1}, \nabla \uenu^{n+1} \rangle  \\
    		& \quad-\int_{\Omega} n^{m}_N (1 + 2\Delta t n^{m}_N) \rvert  \nabla \uenu^{m+1} \rvert ^2 dx + \utau^{m+1}_n(\uenu^{m+1}).
	    \end{split}
	\end{equation}
	From the monotonicity of $\ln{x}$ for $x > 0$, we obtain that
	\begin{align}
		& \langle \uep^{m+1}, \ln{\up^{m+1}}-\ln{p^{m+1}_N} \rangle = \langle \up^{m+1}-p^{m+1}_N, \ln{\up^{m+1}}-\ln{p^{m+1}_N} \rangle \geq 0 \label{lem:pre_lhs_lnp}, \\
		& \langle \uen^{m+1}, \ln{\un^{m+1}}-\ln{n^{m+1}_N} \rangle = \langle \un^{m+1}-n^{m+1}_N, \ln{\un^{m+1}}-\ln{n^{m+1}_N} \rangle \geq 0.
			\label{lem:pre_lhs_lnn}
	\end{align}
From \eqref{err:eq5}, we have 
	\begin{equation}
	\label{lem:pre_lhs_psi}
		\langle \uep^{m+1}-\uen^{m+1}, \uepsi^{m+1} \rangle = \|  \nabla \uepsi^{m+1} \| ^2 \geq 0.
	\end{equation}
	Combining \eqref{lem:pre_lhs_ep1}, \eqref{lem:pre_lhs_en1}, \eqref{lem:pre_lhs_lnp}, \eqref{lem:pre_lhs_lnn} with \eqref{lem:pre_lhs_psi}, we have 
	\begin{equation}
	\label{lem:pre_lhs_res}
		\begin{split}
			& LHS_p + LHS_n  \\
				& \geq -\frac{1}{\Delta t}(\langle \uep^{m}, \uemu^{m+1} \rangle + \langle \uen^{m}, \uenu^{m+1} \rangle ) 
				 -\langle \uep^{m} \uuu^{m} + p^{m}_N \ueu^{m}, \nabla \uemu^{m+1} \rangle
				 - \langle \uen^{m} \uuu^{m} + n^{m}_N \ueu^{m}, \nabla \uenu^{m+1} \rangle.
		\end{split}
	\end{equation}
	Summing up \eqref{lem:pre_rhs_ep1} and \eqref{lem:pre_rhs_en1}  and using \eqref{lem:pre_lhs_res}, we have 
	\begin{equation}
	\label{lem:pre_com}
		\begin{split}
			& \quad \int_{\Omega} p^{m}_N(1 + 2\Delta t p^{m}_N) \rvert  \nabla \uemu^{m+1} \rvert ^2 
			  	  + n^{m}_N(1 + 2\Delta t n^{m}_N) \rvert  \nabla \uenu^{m+1} \rvert ^2 dx \\
			& \leq -\langle \uep^{m}\left(1  + 2\Delta t(\up^{m} + p^{m}_N)\right) \nabla \umu^{m+1}, \nabla \uemu^{m+1} \rangle
					- \langle \uen^{m}\left(1  + 2\Delta t(\un^{m} + n^{m}_N)
				\right) \nabla \unu^{m+1}, \nabla \uenu^{m+1} \rangle \\
			& \quad + \frac{1}{\Delta t}(\langle \uep^{m}, \uemu^{m+1} \rangle + \langle \uen^{m}, \uenu^{m+1} \rangle ) 
				  + \langle \uep^{m} \uuu^{m} + p^{m}_N \ueu^{m}, \nabla \uemu^{m+1} \rangle
				  + \langle \uen^{m} \uuu^{m} + n^{m}_N \ueu^{m}, \nabla \uenu^{m+1} \rangle \\
			& \quad + \utau^{m+1}_p(\uemu^{m+1}) + \utau^{m+1}_n(\uenu^{m+1}).
		\end{split}
	\end{equation}
	Using the $L^{\infty}$ bound of $p^{m}_N, n^{m}_N$ in \eqref{lem:pre_bdd1_p} and \eqref{lem:pre_bdd1_n}, we have
	\begin{equation}
	\label{lem:pre_com_l}
		\int_{\Omega} p^{m}_N(1 + 2\Delta t p^{m}_N) \rvert  \nabla \uemu^{m+1} \rvert ^2 + n^{m}_N(1 + 2\Delta t n^{m}_N) \rvert  \nabla \uenu^{m+1} \rvert ^2 dx
		\ge 
		\frac{\delta_0^{*}}{2}(\|  \nabla \uemu^{m+1} \| ^2 + \| \nabla \uenu^{m+1} \| ^2).
	\end{equation}
	Applying Hölder and Young's inequalities, for the second term in \eqref{lem:pre_com}, we have 
	\begin{equation}
	\label{lem:pre_com_r1}
		\begin{split}
			& \quad-\langle \uep^{m}\left(1  + 2\Delta t(\up^{m} + p^{m}_N)\right) \nabla \umu^{m+1}, \nabla \uemu^{m+1} \rangle \\
			& \leq \|  1 + 2\Delta t(\up^{m} + p^{m}_N) \| _{\infty} \|  \uep^{m} \|  \|  \nabla \umu^{m+1} \| _{\infty} \|  \nabla \uemu^{m+1} \|   \\
			& \leq (4M + \delta_0^{*} + 1)  \|  \uep^{m} \|  \|  \nabla \umu^{m+1} \| _{\infty} \|  \nabla \uemu^{m+1} \|  \\
			& \leq \frac{\delta_0^{*}}{2} \frac{1}{8} \|  \nabla \uemu^{m+1} \| ^2 + \frac{4\tilde{C}}{\delta_0^{*}} \|  \uep^{m} \| ^2,
		\end{split}
	\end{equation}
	and for the third term in \eqref{lem:pre_com}, 
	\begin{equation}
	\label{lem:pre_com_r2}
		\begin{split}
			& \quad-\langle \uen^{m}\left(1  + 2\Delta t(\un^{m} + n^{m}_N)
				\right) \nabla \unu^{m}, \nabla \uenu^{n+1} \rangle \\
			& \leq \|  1 + 2\Delta t(\un^{m} + n^{m}_N) \| _{\infty} \|  \uen^{m} \|  \|  \nabla \unu^{m+1} \| _{\infty} \|  \nabla \uenu^{m+1} \|   \\
			& \leq (4M + \delta_0^{*} + 1) \|  \uen^{m} \|  \|  \nabla \unu^{m+1} \| _{\infty} \|  \nabla \uenu^{m+1} \|  \\
			& \leq \frac{\delta_0^{*}}{2} \frac{1}{8} \|  \nabla \uenu^{m+1} \| ^2 + \frac{4\tilde{C}}{\delta_0^{*}} \|  \uen^{m} \| ^2,
		\end{split}
	\end{equation}
	where $\tilde{C} \geq (4M+\delta_0^{*} + 1)^2 (\|  \nabla \umu^{m+1} \| _{\infty}^2 + \|  \nabla \unu^{m+1} \| _{\infty}^2)$. Note that by \eqref{hoa:mod_reg}, $\| \umu \|_{L^{\infty}_{t}W^{1,\infty}_{x}}$  and $\| \unu \|_{L^{\infty}_{t}W^{1,\infty}_{x}}$  are bounded.
	
	Using Hölder and Young's inequalities, we derive
	\begin{equation}
	\label{lem:pre_com_r3}
		\begin{split}
			& \quad \frac{1}{\Delta t}(\langle \uep^{m}, \uemu^{m+1} \rangle + \langle \uen^{m}, \uenu^{m+1} \rangle ) \\
			& \leq \frac{\delta_0^{*}}{2} \frac{1}{8} (\|  \nabla \uemu^{m+1} \| ^2 + \|  \nabla \uenu^{m+1} \| ^2 ) + \frac{4}{\delta_0^{*}} \frac{1}{\Delta t^2} (\|  \uep^{m} \| ^2_{H^{-1}} + \|  \uen^{m} \| ^2_{H^{-1}}) \\
			& \leq \frac{\delta_0^{*}}{2} \frac{1}{8} (\|  \nabla \uemu^{m+1} \| ^2 + \|  \nabla \uenu^{m+1} \| ^2 ) + \frac{4}{\delta_0^{*}} \frac{C}{\Delta t^2} (\|  \uep^{m} \| ^2 + \|  \uen^{m} \| ^2).
		\end{split}
	\end{equation}
	Using  the  bound  of $\| p^{m}_N \|_{\infty}$ from \eqref{lem:pre_bdd1_p}, and the bound of $\| \uuu^{m} \|_{\infty}$ from \eqref{hoa:mod_reg}, we obtain
	\begin{equation}
	\label{lem:pre_com_r4}
		\begin{split}
			&\quad \langle \uep^{m} \uuu^{m}, \nabla \uemu^{m+1} \rangle
				+ \langle p^{m}_N \ueu^{m}, \nabla \uemu^{m+1} \rangle \\
			& \leq \| \nabla \uemu^{m+1} \|  ( \|  \uuu^{m} \| _{\infty} \|  \uep^{m} \|  + \|  p^{m}_N \| _{\infty} \|  \ueu^{m} \| ) \\
			& \leq \frac{\delta_0^{*}}{2} \frac{1}{8} \|  \nabla \uemu^{m+1} \| ^2 + \frac{4C}{\delta_0^{*}} (\|  \uep^{m} \| ^2 + \|  \ueu^{m} \| ^2  ).
		\end{split}
	\end{equation}
	Similarly,  we obtain
	\begin{equation}
	\label{lem:pre_com_r5}
		\begin{split}
			&\quad \langle \uen^{m} \uuu^{m}, \nabla \uenu^{m+1} \rangle
				+ \langle n^{m}_N \ueu^{m}, \nabla \uenu^{m+1} \rangle \\
			& \leq \| \nabla \uenu^{m+1} \|  ( \|  \uuu^{m} \| _{\infty} \|  \uen^{m} \|  + \|  n^{m}_N \| _{\infty} \|  \ueu^{m} \| ) \\
			& \leq \frac{\delta_0^{*}}{2} \frac{1}{8} \|  \nabla \uenu^{m+1} \| ^2 + \frac{4C}{\delta_0^{*}} (\|  \uen^{m} \| ^2 + \|  \ueu^{m} \| ^2  ).
		\end{split}
	\end{equation}
	From Lemma \ref{lem:hoa}, we have 
	\begin{equation}
	\label{lem:pre_com_r6}
		\begin{split}
			\langle \utau^{m+1}_p, \uemu^{m+1} \rangle + \langle \utau^{m+1}_n, \uenu^{m+1}\rangle 
			& \leq C(\Delta t^3 + N^{-k})(\|  \uemu^{m+1} \| _{H^{1}} + \|  \uenu^{m+1} \| _{H^{1}} ) \\
			& \leq \frac{\delta_0^{*}}{2} \frac{1}{8}(\|  \nabla \uemu^{m+1} \| ^2 + \|  \nabla \uenu^{m+1} \| ^2  ) + \frac{4}{\delta_0^{*}} (C(\Delta t^3 + N^{-k}))^2,
		\end{split} 
	\end{equation}
	where the positive constant $C$ in \eqref{lem:pre_com_r6} is independent of $\Delta t$ and $N$.
	
	 Plugging \eqref{lem:pre_com_l}-\eqref{lem:pre_com_r6} into \eqref{lem:pre_com}, we have 
	\begin{equation}
		\begin{split}\label{lem:result}
			& \quad \frac{\delta_0^{*}}{4}(\|  \nabla \uemu^{m+1} \| ^2 + \|  \nabla \uenu^{m+1} \| ^2 ) \\
			& \leq  \frac{4\tilde{C}}{\delta_0^{*}} ( \|  \uep^{m} \| ^2 + \|  \uen^{m} \| ^2) 
				  + \frac{4}{\delta_0^{*}} \frac{C}{\Delta t^2} (\|  \uep^{m} \| ^2 + \|  \uen^{m} \| ^2) \\
			&	  + \frac{4C}{\delta_0^{*}} (\|  \uep^{m} \| ^2 + \|  \uen^{m} \| ^2 + 2 \|  \ueu^{m} \| ^2  ) 
				  + \frac{4}{\delta_0^{*}} (C(\Delta t^3 + N^{-k}))^2.
		\end{split}
	\end{equation}
	Combing \eqref{lem:result} with assumption \eqref{thm:res1}, \eqref{thm:res2}, \eqref{thm:res3},  we derive
	\begin{equation}
		\label{lem:pre_mu_nu_est}
		\|  \nabla \uemu^{m+1} \|  , \|  \nabla \uenu^{m+1} \|  \leq \hat{C} \frac{\Delta t^{{ \alpha} } + (\frac{1}{N})^{ {\beta} }}{\Delta t},
	\end{equation}
	where $\hat{C}$ depends only on $\delta_0^{*}, \up^{m+1}, \un^{m+1}, \upsi^{m+1}, \uuu^{m+1}$,  independent of  $\Delta t, \frac{1}{N}$. 
	
	Now taking the test function $ v_N = \uep^{m+1}-\uep^{m}$ in \eqref{err:eq1}, we have 
	\begin{equation}
		\label{lem:pre_bdd_0_p}
		\begin{split}
			& \quad \frac{1}{\Delta t} \|  \uep^{m+1}-\uep^{m} \| ^2 \\
			\leq 
			& \quad  \|  \uuu^{m} \uep^{m} + p^{m}_N \ueu^{m} \|  \|  \nabla (\uep^{m+1}-\uep^{m}) \|  \\
			& + \|  \uep^{m}(1 + 2\Delta t (\up^{m} + p^{m}_N)) \nabla \umu^{m+1}  + p^{m}_N(1 + 2\Delta t p^{m}_N)\nabla \uemu^{m+1} \|  \|  \nabla (\uep^{m+1}-\uep^{m}) \|  \\
			& + C(\Delta t^3 + N^{-k}) \|  \nabla(\uep^{m+1}-\uep^{m}) \|  \\
			\le 
			& \quad \bigg( \|  \uuu^{m} \| _{\infty} \|  \uep^{m} \|  
						+ \|  p^{m}_N \| _{\infty} \|  \ueu^{m} \|    
						+ \|  (1 + 2\Delta t (\up^{m} + p^{m}_N)) \nabla \umu^{m+1} \| _{\infty} \|  \uep^{m} \|  \\
			& 			+ \|  p^{m}_N (1 + 2\Delta t p^{m}_N) \| _{\infty} \|  \nabla \uemu^{m+1} \|  
						+ C(\Delta t^3 + N^{-k})
						\bigg) \|  \nabla( \uep^{m+1}-\uep^{m} )\|  \\
			\lesssim & \quad N \bigg( \|  \uuu^{m} \| _{\infty} \|  \uep^{m} \|  
						+ \|  p^{m}_N \| _{\infty} \|  \ueu^{m} \|    
						+ \|  (1 + 2\Delta t (\up^{m} + p^{m}_N)) \nabla \umu^{m+1} \| _{\infty} \|  \uep^{m} \|  \\
			& 			+ \|  p^{m}_N (1 + 2\Delta t p^{m}_N) \| _{\infty} \|  \nabla \uemu^{m+1} \|  
						+ C(\Delta t^3 + N^{-k})
						\bigg) \|  \uep^{m+1}-\uep^{m}\| ,
		\end{split}  
	\end{equation}
	where we have used the inverse inequality
	\begin{equation*}
		\|  \nabla(\uep^{m+1}-\uep^{m}) \|  \lesssim N \|  \uep^{m+1}-\uep^{m} \| .
	\end{equation*}
	Combining \eqref{lem:pre_bdd1_p}, \eqref{lem:pre_mu_nu_est} with \eqref{lem:pre_bdd_0_p}, we have 
	\begin{equation}
		\begin{split}\label{bound:er}
			\|  \uep^{m+1}-\uep^{m} \|  
			& \leq C N \Delta t ( \|  \uep^{m} \|  + \|  \ueu^{m} \|  + \|  \nabla \uemu^{m+1} \|  + \Delta t^3 + N^{-k}  )  \\
			& \leq C N \Delta t \frac{\Delta t^{ { \alpha}  } + N^{ {-\beta} } }{\Delta t} \\
			& \leq C (\Delta t^{ { \alpha - 1} }  + N^{ {-\beta + 1} } ),
		\end{split}
	\end{equation}
	where we have used $\Delta t \leq C \frac{1}{N}$ in \eqref{bound:er}. 
	
	Finally, using the triangle inequality and the inverse inequality, we have 
	\begin{align*}
		& \|  \uep^{m+1} \|  \leq \|  \uep^{m} \|  + \|  \uep^{m+1}-\uep^{m} \|  \leq  C(\Delta t^{ \alpha - 1} + N^{-\beta + 1} ),  \\
		& \|  \uep^{m+1} \| _{\infty}  \leq N \|  \uep^{m+1} \|  \leq
		C(\Delta t^{{ \alpha - 2} } + N^{{- \beta + 2}} ). 
	\end{align*} 
	Similarly, we can derive the bound for  $\uen^{m+1}$:
	\begin{align*}
		& \|  \uen^{m+1} \| \leq C(\Delta t^{ \alpha - 1}  + N^{ -\beta + 1} ), \\
		& \|  \uen^{m+1} \|_{\infty} \leq C(\Delta t^{ \alpha - 2} + N^{{ -\beta + 2} }).
	\end{align*}
	This completes the proof of the lemma. 
\end{proof}

 \subsection{A refined error analysis}
{
Firstly, 
for error terms as defined in \eqref{err_term_mod_def}, we provide following equations
\begin{align}
	& \nabla( \ln{\up^{m+1}} - \ln{p^{m+1}_N}) = \frac{1}{p^{m+1}_N} (\nabla \uep^{m+1} - \uep^{m+1} \nabla \ln{\up^{m+1}}  )\label{aux_1},   \\
	& \nabla( \ln{\un^{m+1}} - \ln{n^{m+1}_N})  = \frac{1}{n^{m+1}_N} (\nabla \uen^{m+1} - \uen^{m+1} \nabla \ln{\un^{m+1}}  )  \label{aux_2}.
\end{align}
Equation \eqref{aux_1} could be derived as 
\begin{equation*}
	\begin{split}
		& \quad \nabla( \ln{\up^{m+1}} - \ln{p^{m+1}_N})  \\
		& = (\frac{\nabla \up^{m+1}}{\up^{m+1}}-\frac{\nabla p^{m+1}_N}{p^{m+1}_N}) \\
		&=  (\frac{p^{m+1}_N}{p^{m+1}_N} \frac{\nabla \up^{m+1}}{\up^{m+1}}-\frac{\nabla p^{m+1}_N}{p^{m+1}_N})\\
		& = \frac{1}{p^{m+1}_N} (\frac{\up^{m+1} - \uep^{m+1}}{\up^{m+1}}\nabla \up^{m+1}  - \nabla p^{m+1}_N ) \\
		& = \frac{1}{p^{m+1}_N} (\nabla \uep^{m+1} - \uep^{m+1} \nabla \ln{\up^{m+1}}  ).
	\end{split}
\end{equation*}
And equation \eqref{aux_2} could be established similarly. 
} 

Now we proceed to a refined error analysis. The main result is 
\begin{theorem}\label{thm:main_sub}
	Under the same assumption  and procedure as in Lemma \ref{lem:hoa},  we can build supplementary fields $(\up, \un, \uuu, \uphi)$, provided $\Delta t$ and $\frac{1}{N}$ sufficiently small and under the linear refinement requirement $\Delta t \le \frac{1}{N}$,  for the numerical error between 
	numerical solution from scheme \eqref{scheme:p1}-\eqref{scheme:u2} 
	and  supplementary fields $(\up, \un, \uuu, \uphi)$,  as defined in \eqref{err_term_mod_def},  we have
	\begin{equation*}
		\begin{split}
			& \quad \|  \uep^{m} \|  + \|  \uen^{m} \|  + \|  \ueu^{m} \|  + \Delta t \|  \nabla \uephi^{m} \|   \\
			& \quad + C_{\delta_0^{*}, M}^{1} (\Delta t \sum_{l=1}^{m}(\|  \nabla \uep^{l} \| ^2 + \| \nabla \uen^{l} \| ^2 + \|  \nabla \ueru^{l} \| ^2 ))^{\frac{1}{2}} \\
			& \leq C_{\delta_0^{*}, M}^{2} (\Delta t^{3} + N^{-k}),
		\end{split}
	\end{equation*}
        for all positive integer $m$, such that $m\Delta t \leq T$,
	where $C_{\delta_0^{*}, M}^1, C_{\delta_0^{*}, M}^2$ are positive constants that are independent of the choice of $\Delta t, N$.
\end{theorem}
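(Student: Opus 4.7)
The strategy is induction on $m$. Because the target order $\bigO(\Delta t^3 + N^{-k})$ is strictly stronger than the rough hypothesis $\Delta t^\alpha + N^{-\beta}$ of Lemma \ref{lem:pre} (with $2 < \alpha < 3$, $2 < \beta < k$), once the refined estimate has been established up to step $m$ the hypothesis of Lemma \ref{lem:pre} is automatically satisfied, yielding $L^\infty$ control of $\uep^m, \uen^m$ and, via \eqref{hoa:mod_pos}-\eqref{hoa:mod_reg} and \eqref{lem:pre_bdd1_p}-\eqref{lem:pre_bdd2_n}, uniform upper and strictly positive lower bounds on $p^m_N, n^m_N$ along with $L^\infty$ bounds on $\nabla p^m_N, \nabla n^m_N$. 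These pointwise bounds are what drive all subsequent $L^2$ estimates, just as in Lemma \ref{lem:pre}, but carried out sharply. The base case $m=0$ follows from the fact that $p^0_N, n^0_N, \uu^0_N, \phi^0_N$ are chosen as $L^2$-projections of the initial data and therefore the initial error is purely spectral and bounded by $N^{-k}$.

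For the PNP block I would test \eqref{err:eq1} with $\uemu^{m+1}$ and \eqref{err:eq3} with $\uenu^{m+1}$ and sum, as in Lemma \ref{lem:pre}. Convexity of $x\ln x$ gives entropy increments $\langle \uep^{m+1}, \ln\up^{m+1} - \ln p^{m+1}_N\rangle \ge 0$; equation \eqref{err:eq5} converts the $\uepsi^{m+1}$ contributions into $\tfrac{1}{2\Delta t}\bigl(\|\nabla \uepsi^{m+1}\|^2 - \|\nabla \uepsi^m\|^2 + \|\nabla(\uepsi^{m+1}-\uepsi^m)\|^2\bigr)$; and the mobility diagonal on the left produces genuine dissipation $\langle p^m_N(1+2\Delta t p^m_N)|\nabla \uemu^{m+1}|^2,1\rangle$ together with its $\uenu$ analogue. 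The cross terms $\langle \uep^m\uuu^m + p^m_N \ueu^m, \nabla \uemu^{m+1}\rangle$ and $\langle \uep^m(1+2\Delta t(\up^m+p^m_N))\nabla\umu^{m+1}, \nabla\uemu^{m+1}\rangle$ are absorbed by Cauchy-Schwarz and Young's inequality exactly as in \eqref{lem:pre_com_r1}-\eqref{lem:pre_com_r5}, while the consistency source is controlled by $|\utau^{m+1}_p(\uemu^{m+1})| + |\utau^{m+1}_n(\uenu^{m+1})| \le C(\Delta t^3 + N^{-k})(\|\nabla\uemu^{m+1}\| + \|\nabla\uenu^{m+1}\|)$ from Lemma \ref{lem:hoa}. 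The key difference from Lemma \ref{lem:pre} is the handling of $\tfrac{1}{\Delta t}\langle\uep^m,\uemu^{m+1}\rangle$: rather than bounding it crudely by $\Delta t^{-2}\|\uep^m\|^2_{H^{-1}}$, one must extract the telescoping pairing with the entropy increment from the previous step so that no $\Delta t^{-2}$ factor remains.

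For the Navier-Stokes block I would test \eqref{err:eq6} with $\ueru^{m+1}$ and \eqref{err:eq7} with $\ueu^{m+1}$, using skew-symmetry $\langle(\uu^m_N\cdot\nabla)\ueru^{m+1}, \ueru^{m+1}\rangle = 0$ and pressure-correction telescoping to obtain increments of $\|\ueu\|^2 + \Delta t^2\|\nabla\uephi\|^2$ together with the dissipation $\|\nabla\ueru^{m+1}\|^2$. Summing the PNP and NSE estimates, the mobility-augmented diagonal contributions $2\Delta t \langle(p^m_N)^2|\nabla\uemu^{m+1}|^2,1\rangle$ and its $\unu$ counterpart cancel the velocity-chemical-potential coupling exactly via the decoupling identity \eqref{thm:es_pnp_nse_1}-\eqref{thm:es_pnp_nse_2} from the proof of Theorem \ref{thm:four_property}, yielding
\begin{equation*}
\mathcal{E}^{m+1} - \mathcal{E}^{m} + c_0 \Delta t \bigl(\|\nabla \uemu^{m+1}\|^2 + \|\nabla \uenu^{m+1}\|^2 + \|\nabla \ueru^{m+1}\|^2\bigr) \le C \Delta t\, \mathcal{E}^{m} + C \Delta t (\Delta t^3 + N^{-k})^2,
\end{equation*}
where $\mathcal{E}^m := \|\uep^m\|^2 + \|\uen^m\|^2 + \|\ueu^m\|^2 + \Delta t^2 \|\nabla\uephi^m\|^2$. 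Converting $\|\nabla\uemu^{m+1}\|$ to $\|\nabla\uep^{m+1}\|$ through the identity \eqref{aux_1}, namely $\nabla\uep^{m+1} = p^{m+1}_N\bigl(\nabla\uemu^{m+1} - \nabla\uepsi^{m+1}\bigr) + \uep^{m+1}\nabla\ln\up^{m+1}$, and symmetrically for $\uen$, costs only lower-order contributions absorbable into $\mathcal{E}^{m+1}$ thanks to the positive lower bound on $p^{m+1}_N, n^{m+1}_N$. A discrete Gronwall argument then closes the induction. The main obstacle is this final conversion step: the log-derivative remainder must be tracked with sharp constants so that the Gronwall amplification factor remains $e^{CT}$-bounded uniformly in $\Delta t, N$, and the refined bound genuinely improves rather than merely reproduces the inductive hypothesis — which is where the linear refinement $\Delta t \le 1/N$ and smallness of $\Delta t, N^{-1}$ enter decisively.
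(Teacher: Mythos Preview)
Your inductive framework is right, and so is the observation that Lemma~\ref{lem:pre} supplies the $L^\infty$ bounds and hence the two-sided pointwise bounds on $p^{m}_N, p^{m+1}_N, n^{m}_N, n^{m+1}_N$ that the refined step needs. But your execution of the refined step has a genuine gap.

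You propose to test \eqref{err:eq1} with $\uemu^{m+1}$, exactly as in the rough Lemma~\ref{lem:pre}, and then to ``extract the telescoping pairing with the entropy increment from the previous step so that no $\Delta t^{-2}$ factor remains.'' There is no such telescoping available. The time-difference term produces
\[
\frac{1}{\Delta t}\langle \uep^{m+1},\,\ln\up^{m+1}-\ln p^{m+1}_N\rangle
+\frac{1}{\Delta t}\langle \uep^{m+1},\,\uepsi^{m+1}\rangle
-\frac{1}{\Delta t}\langle \uep^{m},\,\uemu^{m+1}\rangle.
\]
The first summand is merely nonnegative (monotonicity of $\ln$), not a difference of a functional evaluated at consecutive steps; it does not combine with the third summand into a telescoping increment. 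You are left with $\tfrac{1}{\Delta t}\langle \uep^{m},\,\uemu^{m+1}\rangle$ to estimate outright, and any $L^2$-type bound on it costs $\Delta t^{-2}\|\uep^{m}\|^2$, as in \eqref{lem:pre_com_r3}. That destroys the refined rate. Moreover, testing with $\uemu^{m+1}$ never produces the increment $\|\uep^{m+1}\|^2-\|\uep^{m}\|^2$ that appears in your claimed $\mathcal{E}^m$; the only ``energy'' you control this way is $\|\nabla\uepsi\|^2$ and the sign-definite entropy remainder, which is not $\|\uep\|^2$.

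The paper avoids this entirely by changing the test function: it tests \eqref{err:eq1} with $v_N=\uep^{m+1}$ and \eqref{err:eq3} with $v_N=\uen^{m+1}$. This gives $\tfrac{1}{2\Delta t}(\|\uep^{m+1}\|^2-\|\uep^{m}\|^2)$ directly, with no bad factor. The dissipation in $\|\nabla\uep^{m+1}\|^2$ is then extracted from the mobility term $\langle p^m_N(1+2\Delta t p^m_N)\nabla\uemu^{m+1},\nabla\uep^{m+1}\rangle$ by the identity \eqref{aux_1}, which rewrites $\nabla(\ln\up^{m+1}-\ln p^{m+1}_N)=\tfrac{1}{p^{m+1}_N}(\nabla\uep^{m+1}-\uep^{m+1}\nabla\ln\up^{m+1})$; the lower bound $p^{m+1}_N\ge\delta_0^*/2$ then yields a genuine coercive term $\tfrac{\delta_0^*}{2M+\delta_0^*}\|\nabla\uep^{m+1}\|^2$ (see \eqref{thm:err_equ_p1}--\eqref{thm:err_equ_p2}). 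For the NSE block the paper likewise does \emph{not} invoke the decoupling identity \eqref{thm:es_pnp_nse_1}--\eqref{thm:es_pnp_nse_2}; the coupling term $\langle p^m_N\nabla\uemu^{m+1}+n^m_N\nabla\uenu^{m+1},\ueru^{m+1}\rangle$ is bounded directly via \eqref{aux_1}--\eqref{aux_2} at the cost of a small fraction of $\|\nabla\uep^{m+1}\|^2+\|\nabla\uen^{m+1}\|^2$, absorbed into the PNP dissipation (see \eqref{err:eq:1}--\eqref{thm:err_equ_u6}). In short: the refined analysis does not sharpen the chemical-potential testing of Lemma~\ref{lem:pre}; it abandons it in favour of direct $L^2$ testing, with \eqref{aux_1}--\eqref{aux_2} doing the work you tried to assign to a telescoping identity.
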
 
 
\begin{proof}
    The proof of Theorem \ref{thm:main_sub} is divided into two steps:
    \begin{itemize}
    \item  {\bf Step 1:} Assume that the rough estimate \eqref{thm:res1}-\eqref{thm:res3} is true for all the $m \leq m^{*}$, where $m^{*}\Delta t \leq T $, we will obtain an error estimate for the $(m+1)$-th time step as \eqref{thm:err_gron0};
    \item  {\bf Step 2:} Recover the rough estimate \eqref{thm:res1}-\eqref{thm:res3} for the $(m^{*} + 1)$-th time step.
    \end{itemize}
    
    {\bf Step 1: A refined error analysis with a prior assumption. }
    
	First, from the choice of initial data:
	\begin{align*}
		& p^{0}_N = \projN p(\cdot, 0) = \up^{0}, \quad n^{0}_N = \projN n(\cdot, 0) = \un^{0}, \quad \psi^{0}_N = \projN \psi(\cdot, 0) = \upsi^{0}, \\
		& \uu^{0}_N = \projN \uu(\cdot, 0) = \uuu^{0}, \quad \phi^{0}_N = \projN \phi(\cdot, 0) =  \uphi^{0},
	\end{align*}
	we have 
	\begin{equation*}
		\uep^{0} = \uen^{0} = \uepsi^{0} = \ueu^{0} = \uephi^{0} = 0.
	\end{equation*}
	
    Assume \eqref{thm:res1}-\eqref{thm:res3} hold for the $m$-th time step  with $\alpha = \frac{11}{4}, \ \beta = k - \frac{1}{4}$. Then by Lemma \ref{lem:pre}, we have 
	\begin{align*}
		& \|  \uep^{m+1} \| _{\infty} \leq C(\Delta t^{\frac{3}{4}} + (\frac{1}{N})^{k-\frac{9}{4}}) \leq \frac{\delta_0^{*}}{2}, \\
		& \|  \uen^{m+1} \| _{\infty} \leq C(\Delta t^{\frac{3}{4}} + (\frac{1}{N})^{k-\frac{9}{4}}) \leq \frac{\delta_0^{*}}{2},
	\end{align*}
	where $\delta_0^{*} >0 $ is sufficiently small. Since $\up, \un$ are also bounded, we  obtain
	\begin{align}
		& \frac{\delta_0^{*}}{2} 
			\leq \min \up^{m+1}-\|  \uep^{m+1} \| _{\infty} 
			\leq p^{m+1}_N 
			\leq \|  \up^{m+1} \| _{\infty} + \|  \uep^{m+1} \| _{\infty} 
			\leq M + \frac{\delta_0^{*}}{2},
			\label{thm:err_bdd_p}	 \\
		& \frac{\delta_0^{*}}{2} 
			\leq \min \up^{m+1}-\|  \uep^{m+1} \| _{\infty} 
			\leq n^{m+1}_N 
			\leq \|  \un^{m+1} \| _{\infty} + \|  \uen^{m+1} \| _{\infty} 
			\leq M + \frac{\delta_0^{*}}{2}
			\label{thm:err_bdd_n}.
	\end{align}
	Now we proceed to the proof, which is divided into two steps.
	
	{ \bf (i) Estimate of \eqref{err:eq1}-\eqref{err:eq5}:}
	
	Taking the test function $v_N = \uep^{m+1}$ in \eqref{err:eq1}, we obtain
	\begin{equation}
		\label{thm:err_equ_p}
		\begin{split}
			& \quad \frac{1}{2\Delta t} (\|  \uep^{m+1} \| ^2-\|  \uep^{m} \| ^2 + \|  \uep^{m+1}-\uep^{m} \| ^2) \\ 
			& = \langle \uep^{m} \uuu^{m}, \nabla \uep^{m+1} \rangle 
				+ \langle p^{m}_N \ueu^{m}, \nabla \uep^{m+1} \rangle \\
			& \quad-\langle \uep^{m}(1 + 2\Delta t(\up^{m} + p^{m}_N)) \nabla \umu^{m+1}, \nabla \uep^{m+1} \rangle \\
			& \quad -\langle p^{m}_N(1 + 2\Delta t p^{m}_N) \nabla \uemu^{m+1}, \nabla \uep^{m+1} \rangle  \\
			& \quad + \langle \utau^{m+1}_{p},\uep^{m+1} \rangle.
		\end{split}
	\end{equation} 
	Using $\uemu^{m+1} = \ln{\up^{m+1}}-\ln{p^{m+1}_N} + \uepsi^{m+1}$  and \eqref{aux_1} we have 
	{
	\begin{equation}
		\label{thm:err_equ_p1}
		\begin{split}
			& \quad -\langle p^{m}_N(1 + 2\Delta t p^{m}_N) \nabla \uemu^{m+1}, \nabla \uep^{m+1} \rangle \\
			& = -\langle  p^{m}_N(1 + 2\Delta t p^{m}_N) \nabla (\ln{\up^{m+1}}-\ln{p^{m+1}_N}), \nabla \uep^{m+1} \rangle 
				- \langle p^{m}_N(1 + 2\Delta t p^{m}_N) \nabla \uepsi^{m+1}, \nabla \uep^{m+1} \rangle  \\ 
			& =-\langle \frac{p^{m}_N(1 + 2\Delta t p^{m}_N)}{p^{m+1}_N} \nabla \uep^{m+1}, \nabla \uep^{m+1} \rangle  
				- \langle  \frac{p^{m}_N(1 + 2\Delta t p^{m}_N)}{p^{m+1}_N} \uep^{m+1} \nabla \ln{ \up^{m+1}}, \nabla \uep^{m+1} \rangle \\
			& \quad-\langle p^{m}_N(1 + 2\Delta t p^{m}_N) \nabla \uepsi^{m+1}, \nabla \uep^{m+1} \rangle.
		\end{split}
	\end{equation}
	}
	Using the bounds of $p^{m}_N$ and $p^{m+1}_N$ given in \eqref{lem:pre_bdd1_p}, \eqref{thm:err_bdd_p},  we have 
	\begin{equation}
		\label{thm:err_equ_p2}
		- \langle \frac{p^{m}_N(1 + 2\Delta t p^{m}_N)}{p^{m+1}_N} \nabla \uep^{m+1}, \nabla \uep^{m+1} \rangle  
		\leq 
		- \frac{\delta_0^{*}}{2M + \delta_0^{*}} 
		\|  \nabla \uep^{m+1} \| ^2.
	\end{equation}
	For the last two terms in \eqref{thm:err_equ_p}, and the right hand side terms in \eqref{thm:err_equ_p1}, applying Hölder and Young's inequalities and the  properties in \eqref{thm:err_bdd_p}, \eqref{lem:pre_bdd1_p}, we have 
	\begin{equation}
		\label{thm:err_equ_p3}
		\begin{split}
			& \quad \langle \uep^{m} \uuu^{m}, \nabla \uep^{m+1} \rangle 
				+ \langle p^{m}_N \ueu^{m}, \nabla \uep^{m+1} \rangle \\
			& \leq \|  \uuu^{m} \| _{\infty} \|  \uep^{m} \|  \|  \nabla \uep^{m+1} \|  
				+  \|  p^{m}_N \| _{\infty} \|  \ueu^{m} \|  \|  \nabla \uep^{m+1} \|   \\
			& \leq \frac{1}{8} \frac{\delta_0^{*}}{2M + \delta_0^{*}} \|  \nabla \uep^{m+1} \| ^2 
				+ 
				\frac{4 M + 2 \delta_0^{*}}{\delta_0^{*}}
				\big(
				\|  \uuu^{m} \| _{\infty}^2  \|  \uep^{m} \| ^2
				+ \|  p^{m}_N \| _{\infty}^2  \|  \ueu^{m} \| ^2    
				\big) \\
			& \leq \frac{1}{8} \frac{\delta_0^{*}}{2M + \delta_0^{*}}  \|  \nabla \uep^{m+1} \| ^2_2
				+ C_{M, \delta_0^{*}}
				(\|  \uep^{m} \| ^2_2 + \|  \ueu^{m} \| ^2_2),
		\end{split}
	\end{equation}
	
	\begin{equation}
		\label{thm:err_equ_p4}
		\begin{split}
			& \quad \langle \uep^{m}(1 + 2\Delta t(\up^{m} + p^{m}_N)) \nabla \umu^{m+1}, \nabla \uep^{m+1} \rangle  \\
			& \leq \|  1 + 2\Delta t (\up^{m} + p^{m}_N) \| _{\infty} 
					\|  \nabla \umu^{m+1} \| _{\infty} 
			 		\|  \uep^{m} \|   \|  \nabla \uep^{m+1} \|  \\
			& \leq \frac{1}{8} \frac{\delta_0^{*}}{2M + \delta_0^{*}}  \|  \nabla \uep^{m+1} \| ^2 
				+ C_{M, \delta_0^{*}} \|  \uep^{m} \| ^2,
		\end{split}
	\end{equation}
	
	\begin{equation}
		\label{thm:err_equ_p5}
		\begin{split}
			& \quad  \langle  \frac{p^{m}_N(1 + 2\Delta t p^{m}_N)}{p^{m+1}_N} \uep^{m+1} { \nabla \ln{\up^{m+1}}}, \nabla \uep^{m+1} \rangle \\
			& \leq \|  \frac{p^{m}_N(1 + 2\Delta t p^{m}_N)}{p^{m+1}_N} \| _{\infty}  \| \frac{\nabla \up^{m+1}}{\up^{m+1}} \| _{\infty} \|  \uep^{m+1} \|  \|  \nabla \uep^{m+1} \|   \\
			& \leq \frac{1}{8} \frac{\delta_0^{*}}{2M + \delta_0^{*}}  \|  \nabla \uep^{m+1} \| ^2 
				+ C_{M, \delta_0^{*}} \|  \uep^{m+1} \| ^2, 
		\end{split}
	\end{equation}
	and
	\begin{equation}
		\label{thm:err_equ_p6}
		\begin{split}
			& \quad \langle p^{m}_N(1 +2 \Delta t p^{m}_N) \nabla \uepsi^{m+1}, \nabla \uep^{m+1} \rangle \\
			& \leq \|  p^{m}_N (1 +2 \Delta t p^{m}_N) \| _{\infty} 
					\|  \nabla \uepsi^{m+1} \|  
					\|  \nabla \uep^{m+1} \|  \\
			& \leq \frac{1}{16} \frac{\delta_0^{*}}{2M + \delta_0^{*}} \|  \nabla \uep^{m+1} \| ^2 
					+ C_{M, \delta_0^{*}} \|  \nabla \uepsi^{m+1} \| ^2 \\
			& \leq \frac{1}{16} \frac{\delta_0^{*}}{2M + \delta_0^{*}} \|  \nabla \uep^{m+1} \| ^2
					+ C_{M, \delta_0^{*}} (\|  \uep^{m+1} \| ^2 + \|  \uen^{m+1} \| ^2),
		\end{split}
	\end{equation}
	where we have used the elliptic estimate from \eqref{err:eq5} to get
	\begin{equation*}
		\label{thm:err_equ_p6_1}
		{ \|  \nabla \uepsi^{m+1} \| ^2} \leq C (\|  \uep^{m+1} \| ^2 + \|  \uen^{m+1} \| ^2).
	\end{equation*}
	From Lemma \ref{lem:hoa}, we have 
	\begin{equation}
		\label{thm:err_equ_p7}
		\begin{split}
			\utau^{m+1}_{p}(\uep^{m+1}) 
			& \leq C(\Delta t^3 + N^{-k}) \|  \uep^{m+1} \| _{H^{1}}  \\
			& \leq \frac{1}{16} \frac{\delta_0^{*}}{2M + \delta_0^{*}} \|  \nabla \uep^{m+1} \| ^2 
					+ C_{M, \delta_0^{*}} (\Delta t^3 + N^{-k})^2.
		\end{split}
	\end{equation}
	Plugging \eqref{thm:err_equ_p1}, \eqref{thm:err_equ_p2}, \eqref{thm:err_equ_p3}, \eqref{thm:err_equ_p4}, \eqref{thm:err_equ_p5},
		 \eqref{thm:err_equ_p6}, \eqref{thm:err_equ_p7} into \eqref{thm:err_equ_p}, we obtain 
	\begin{equation}
	\label{thm:err_gron_p}
		\begin{split}
			& \quad \frac{1}{2\Delta t} (\|  \uep^{m+1} \| ^2-\|  \uep^{m} \| ^2 + \|  \uep^{m+1}-\uep^{m} \| ^2) 
				+ \frac{1}{2} \frac{\delta_0^{*}}{2M + \delta_0^{*}} \|  \nabla \uep^{m+1} \| ^2  \\
			& \leq C_{M, \delta_0^{*}} \big(\|  \uep^{m} \| ^2 + 
										 \|  \ueu^{m} \| ^2 +
										 \|  \uep^{m+1} \| ^2
										 + \|  \uen^{m+1} \| ^2
										 + (\Delta t ^3 + N^{-k})^2 \big).
		\end{split} 
	\end{equation}
	Similarly, taking $v_N = \uen^{m+1}$ in \eqref{err:eq3}, we obtain
	\begin{equation}
	\label{thm:err_gron_n}
		\begin{split}
			& \quad \frac{1}{2\Delta t} (\|  \uen^{m+1} \| ^2-\|  \uen^{m} \| ^2 + \|  \uen^{m+1}-\uen^{m} \| ^2) 
				+ \frac{1}{2} \frac{\delta_0^{*}}{2M + \delta_0^{*}} \|  \nabla \uen^{m+1} \| ^2  \\
			& \leq C_{M, \delta_0^{*}} \big(\|  \uen^{m} \| ^2 + 
										 \|  \ueu^{m} \| ^2 +
										 \|  \uep^{m+1} \| ^2
										 + \|  \uen^{m+1} \| ^2
										 + (\Delta t ^3 + N^{-k})^2 \big).
		\end{split}
	\end{equation}
	
{\bf (ii) Estimate of \eqref{err:eq6}-\eqref{err:eq8}}. 
	
	Taking $v_N = \ueru^{m+1}$ in \eqref{err:eq6} yields 
	\begin{equation}
		\label{thm:err_equ_u0}
		\begin{split}
			& \quad \frac{1}{2\Delta t}( \|  \ueru^{m+1} \| ^2-\|  \ueu^{m} \| ^2 + \|  \ueru^{m+1}-\ueu^{m} \| ^2) \\
			& \quad + \langle (\ueu^{m} \cdot \nabla) R_N \uuu^{m+1}, \ueru^{m+1} \rangle
				 + \|  \nabla \ueru^{m+1} \| ^2 + \langle \nabla \uephi^{m}, \ueru^{m+1} \rangle \\
			& = -\langle \uep^{m} \nabla \umu^{m+1} + \uen^{m} \nabla \unu^{m+1}, \ueru^{m+1} \rangle
			 -\langle p^{m}_N \nabla \uemu^{m+1} + n^{m}_N \nabla \uenu^{m+1}, \ueru^{m+1} \rangle \\
			& \quad + \langle \utau^{m+1}_u, \ueru^{m+1} \rangle,
		\end{split}
	\end{equation}
	{ where we have used \eqref{scheme:u3} to obtain}
	\begin{equation*}
		\langle (\uu^{m}_N \cdot \nabla) \ueru^{m+1}, \ueru^{m+1} \rangle = 0.
	\end{equation*}
	Taking the test function $v_N = \frac{1}{2}(\ueu^{m+1} + \ueru^{m+1})$ in \eqref{err:eq7}, we obtain
	\begin{equation}
		\label{thm:err_equ_u1}
		\frac{1}{2 \Delta t}(\|  \ueu^{m+1} \| ^2-\|  \ueru^{m+1} \| ^2) + \frac{1}{2} \langle \nabla(\uephi^{m+1}-\uephi^{m}), \ueru^{m+1} \rangle = 0.
	\end{equation}
	Summing \eqref{thm:err_equ_u0}  with  \eqref{thm:err_equ_u1}, we have 
	\begin{equation}
		\label{thm:err_equ_u2}
		\begin{split}
			& \quad \frac{1}{2\Delta t}(\|  \ueu^{m+1} \| ^2-\|  \ueu^{m} \| ^2 + \|  \ueru^{m+1}-\ueu^{m} \| ^2) \\
			& \quad + \langle (\ueu^{m} \cdot \nabla) R_N \uuu^{m+1}, \ueru^{m+1} \rangle  
			  + \|  \nabla \ueru^{m+1} \| ^2 + \frac{1}{2}\langle \nabla(\uephi^{m+1} + \uephi^{m}), \ueru^{m+1} \rangle \\
			& = -\langle \uep^{m} \nabla \umu^{m+1} + \uen^{m} \nabla \unu^{m+1}, \ueru^{m+1} \rangle
			 -\langle p^{m}_N \nabla \uemu^{m+1} + n^{m}_N \nabla \uenu^{m+1}, \ueru^{m+1} \rangle \\
			& \quad + \langle  \utau^{m+1}_u, \ueru^{m+1} \rangle.
		\end{split}
	\end{equation}
	For the second term in \eqref{thm:err_equ_u2}, we have
	\begin{equation}
		\label{thm:err_equ_u3}
		\begin{split}
			& \quad \rvert \langle (\ueu^{m} \cdot \nabla) R_N \uuu^{m+1}, \ueru^{m+1} \rangle\rvert  \\
			& =-\langle (\ueu^{m} \cdot \nabla) \ueru^{m+1}, R_N \uuu^{m+1} \rangle \\
			& \leq \|  \ueu^{m} \|  \|  \nabla \ueru^{m+1} \|  \|  R_N \uuu^{m+1} \| _{\infty} \\
			& \leq \frac{1}{4} \|  \nabla \ueru^{m+1} \| ^2 + \|  R_N \uuu^{m+1} \| _{\infty}^2 \|  \ueu^{m} \| ^2.
		\end{split}
	\end{equation}
	Taking the test function $v_N = \nabla(\uephi^{m+1} + \uephi^{m})$ in \eqref{err:eq7}, we obtain
	\begin{equation}
		\label{thm:err_equ_u4}
		\begin{split}
			\langle \nabla(\uephi^{m+1} + \uephi^{m}), \ueru^{m+1} \rangle = \Delta t (\|  \nabla \uephi^{m+1} \| ^2-\|  \nabla \uephi^{m} \| ^2).
		\end{split}
	\end{equation}
	For the first and second term on the right hand side of \eqref{thm:err_equ_u2}, we have
	\begin{equation}
		\label{thm:err_equ_u5} 
		\begin{split}
			& \quad \rvert \langle \uep^{m} \nabla \umu^{m+1} + \uen^{m} \nabla \unu^{m+1}, \ueru^{m+1} \rangle\rvert  \\
			& \leq (\|  \nabla \umu^{m+1} \| _{\infty} \|  \uep^{m} \|  + \|  \nabla \unu^{m+1} \| _{\infty} \|  \uen^{m} \| ) \|  \ueru^{m+1} \|  \\
			& \leq \|  \ueru^{m+1} \| ^2 + \frac{1}{4} (\|  \nabla \umu^{m+1} \| ^2_{\infty} \|  \uep^{m} \| ^2 + \|  \nabla \unu^{m+1} \| ^2_{\infty} \|  \uen^{m} \| ^2),
		\end{split}
	\end{equation}
	and 
	{
	\begin{equation}
		\begin{split}\label{err:eq:1}
			& \quad \langle p^{m}_N \nabla \uemu^{m+1} + n^{m}_N \nabla \uenu^{m+1}, \ueru^{m+1} \rangle \\
			& = \langle p^{m}_N \nabla (\ln{\up^{m+1}}-\ln{p^{m+1}_N} + \uepsi^{m+1}), \ueru^{m+1} \rangle 
				+ \langle n^{m}_N \nabla (\ln{\un^{m+1}}-\ln{n^{m+1}_N}-\uepsi^{m+1}), \ueru^{m+1} \rangle \\
			& = \langle p^{m}_N \nabla (\ln{\up^{m+1}}-\ln{p^{m+1}_N})
						, \ueru^{m+1} \rangle 
				+ \langle n^{m}_N \nabla (\ln{\un^{m+1}}-\ln{n^{m+1}_N})
						, \ueru^{m+1} \rangle \\
			&\quad 	+ \langle (p^{m}_N-n^{m}_N) \nabla \uepsi^{m+1}, \ueru^{m+1} \rangle. 
		\end{split}
	\end{equation}
	}
	Consider the first two terms on right-hand side of \eqref{err:eq:1} { and apply \eqref{aux_1} \eqref{aux_2},}  we have
	{
	\begin{equation}
		\begin{split}\label{err:eq:1_1}
			& \quad \rvert \langle p^{m}_N \nabla (\ln{\up^{m+1}}-\ln{p^{m+1}_N})
						, \ueru^{m+1} \rangle  \rvert  \\
			& = \langle \frac{p^{m}_N}{p^{m+1}_N} \nabla \uep^{m+1}, \ueru^{m+1} \rangle
				- \langle \frac{p^{m}_N}{p^{m+1}_N} \uep^{m+1} \nabla \ln{\up^{m+1}}, \ueru^{m+1} \rangle  \\
			& \leq \|  \frac{p^{m}_N}{p^{m+1}_N} \| _{\infty} \|  \nabla \uep^{m+1} \|  \|  \ueru^{m+1} \|   
					+ \|  \frac{p^{m}_N}{p^{m+1}_N} \| _{\infty} \| \nabla \ln{\up^{m+1}} \|_{\infty}  \|  \uep^{m+1} \|  \|  \ueru^{m+1} \|  \\ 
			& \leq \frac{1}{8} \frac{\delta_0^{*}}{2M + \delta_0^{*}}  
					(\|  \nabla \uep^{m+1} \| ^2  + \|  \uep^{m+1} \| ^2) + C_{\delta_0^{*}, M} \|  \ueru^{m+1} \| ^2,
		\end{split}
	\end{equation}
and 
	\begin{equation}\label{err:eq:1_2}
		\rvert \langle n^{m}_N \nabla (\ln{\un^{m+1}}-\ln{n^{m+1}_N})
						, \ueru^{m+1} \rangle \rvert 
		\leq \frac{1}{8} \frac{\delta_0^{*}}{2M + \delta_0^{*}}  
					(\|  \nabla \uen^{m+1} \| ^2  + \|  \uen^{m+1} \| ^2) + C_{\delta_0^{*}, M} \|  \ueru^{m+1} \| ^2.
	\end{equation}
	}
	Using the estimate \eqref{thm:err_equ_p6_1}, for the final term in \eqref{err:eq:1}, we obtain 
	\begin{equation}
		\begin{split}\label{err:eq:1_3}
			& \quad \abs{ \langle (p^{m}_N-n^{m}_N) \nabla \uepsi^{m+1}, \ueru^{m+1} \rangle }  \\
			& \leq (\|  p^{m}_N \| _{\infty} + \|  n^{m}_N \| _{\infty} ) \|  \nabla \uepsi^{m+1} \| \|  \ueru^{m+1} \| \\
			& \leq C  (\|  p^{m}_N \| _{\infty} + \|  n^{m}_N \| _{\infty} ) 
					(\|  \uep^{m+1} \| + \|  \uen^{m+1} \|) 
					\|  \ueru^{m+1} \|  \\
			& \leq \|  \uep^{m+1} \|^2 + \|  \uen^{m+1} \|^2 + C_{\delta_0^{*}, M} \|  \ueru^{m+1} \| ^2.
		\end{split}
	\end{equation}
	Combining all these estimates \eqref{err:eq:1}-\eqref{err:eq:1_3}, we have 
	\begin{equation}
		\label{thm:err_equ_u6}
		\begin{split}
			& \quad \rvert \langle p^{m}_N \nabla \uemu^{m+1} + n^{m}_N \nabla \uenu^{m+1}, \ueru^{m+1} \rangle\rvert  \\
			& \leq \frac{1}{4} \frac{\delta_0^{*}}{2M + \delta_0^{*}} 
					(\|  \nabla \uep^{m+1} \| ^2 + \|  \nabla \uen^{m+1} \| ^2 )
					+ C_{\delta_0^{*}, M}(\|  \uep^{m+1} \| ^2_2 
										+ \|  \uen^{m+1} \| ^2_2
										+ \|  \ueru^{m+1} \| ^2_2   ).
		\end{split}
	\end{equation}
	From Lemma \ref{lem:hoa}, we have 
	\begin{equation}
		\label{thm:err_equ_u7}
		\begin{split}
		\langle	\utau_u^{m+1}, \ueu^{m+1} \rangle
			& \leq C(\Delta t^3 + N^{-k}) \|  \ueru^{m+1} \| _{H^{1}} \\
			& \leq \frac{1}{4} \|  \nabla \ueru^{m+1} \| ^2_2 + C(\Delta t^3 + N^{-k})^2.  
		\end{split}
	\end{equation}
	Plugging \eqref{thm:err_equ_u3}, \eqref{thm:err_equ_u4}, \eqref{thm:err_equ_u5}, \eqref{thm:err_equ_u6}, \eqref{thm:err_equ_u7} into \eqref{thm:err_equ_u2}, we obtain
	\begin{equation}
		\label{thm:err_gron_u_pre}
		\begin{split}
			& \quad \frac{1}{2\Delta t}(\|  \ueu^{m+1} \| ^2-\|  \ueu^{m} \| ^2 + \|  \ueru^{m+1}-\ueu^{m} \| ^2 + \Delta t^2 \|  \nabla \uephi^{m+1} \| ^2-\Delta t^2 \|  \nabla \uephi^{m} \| ^2  ) \\
			& \quad + \frac{1}{2} \|  \nabla \ueru^{m+1} \| ^2-\frac{1}{4} \frac{\delta_0^{*}}{2M + \delta_0^{*}} (\|  \nabla \uep^{m+1} \| ^2 + \|  \nabla \uen^{m+1} \| ^2) \\
			& \leq C_{\delta_0^{*}, M} (\|  \uep^{m} \| ^2 + \|  \uen^{m} \| ^2 + \|  \uep^{m+1} \| ^2 + \|  \uen^{m+1} \| ^2 + \|  \ueu^{m} \| ^2) \\
			& \quad  + C_{\delta_0^{*}, M} \|  \ueru^{m+1} \| ^2 + C(\Delta t^3 + N^{-k})^2.
		\end{split}
	\end{equation}
	Now taking the test function $v_N = \nabla(\uephi^{m+1}-\uephi^{m})$ in \eqref{err:eq7}, and combining \eqref{thm:err_equ_u1}, we have
	\begin{equation}\label{e:22}
		\|  \ueru^{m+1} \| ^2 = \|  \ueu^{m+1} \| ^2 + \Delta t^2 \|  \nabla(\uephi^{m+1}-\uephi^{m}) \| ^2.
	\end{equation}
	Plugging \eqref{e:22} into \eqref{thm:err_gron_u_pre}, we obtain 
	\begin{equation}
		\label{thm:err_gron_u}
		\begin{split}
			& \quad \frac{1}{2\Delta t}(\|  \ueu^{m+1} \| ^2-\|  \ueu^{m} \| ^2 + \|  \ueru^{m+1}-\ueu^{m} \| ^2 + \Delta t^2 \|  \nabla \uephi^{m+1} \| ^2-\Delta t^2 \|  \nabla \uephi^{m} \| ^2  ) \\
			& \quad + \frac{1}{2} \|  \nabla \ueru^{m+1} \| ^2-\frac{1}{4} \frac{\delta_0^{*}}{2M + \delta_0^{*}} (\|  \nabla \uep^{m+1} \| ^2 + \|  \nabla \uen^{m+1} \| ^2) \\
			& \leq C_{\delta_0^{*}, M} \big(\|  \uep^{m} \| ^2 + \|  \uen^{m} \| ^2 + \|  \uep^{m+1} \| ^2 + \|  \uen^{m+1} \| ^2 + \|  \ueu^{m} \| ^2 \\
			& \quad  + \|  \ueu^{m+1} \| ^2 + \Delta t^2 \|  \nabla \uephi^{m+1} \| ^2 + \Delta t^2 \|  \nabla \uephi^{m} \| ^2  \big) + C(\Delta t^3 + N^{-k})^2.
		\end{split}
	\end{equation}
	 
	{{\bf Step 2:} Recovery of the induction assumption \eqref{thm:res1}-\eqref{thm:res3} for the $(m^{*}+1)$-step. \quad }
	
	A summation of \eqref{thm:err_gron_p} \eqref{thm:err_gron_n} \eqref{thm:err_gron_u} leads to 
	\begin{equation}
		\label{thm:err_gron0}
		\begin{split}
			& \quad \frac{1}{2\Delta t}\big(\|  \uep^{m+1} \| ^2 + \|  \uen^{m+1} \| ^2 + \|  \ueu^{m+1} \| ^2
				+ \Delta t^2 \|  \nabla \uephi^{m+1} \| ^2 \\
			& \qquad-\|  \uep^{m} \| ^2-\|  \uen^{m} \| ^2-\|  \ueu^{m} \| ^2-\Delta t^2 \|  \uephi^{m} \| ^2 \\
			& \qquad + \|  \uep^{m+1}-\uep^{m} \| ^2  + \|  \uen^{m+1}-\uen^{m} \| ^2 + \|  \ueru^{m+1}-\ueu^{m} \| ^2  \big) \\
			& \quad + \frac{1}{4} \frac{\delta_0^{*}}{2M + \delta_0^{*}} (\|  \nabla \uep^{m+1} \| ^2 + \|  \nabla \uen^{m+1} \| ^2 )
				+ \frac{1}{2} \|  \nabla \ueru^{m+1} \| ^2  \\
			& \leq C_{\delta_0^{*}, M} \big( 
				\|  \uep^{m} \| ^2 + \|  \uep^{m+1} \| ^2 +
				\|  \uen^{m} \| ^2 + \|  \uen^{m+1} \| ^2  
				+ \|  \ueu^{m} \| ^2 \\
			& \quad + \|  \ueu^{m+1} \| ^2 + \Delta t^2 \|  \nabla \uephi^{m+1} \| ^2 + \Delta t^2 \|  \nabla \uephi^{m} \| ^2 \big)
				+ C_{\delta_0^{*}, M}(\Delta t^3 + N^{-k})^2.
		\end{split}
	\end{equation}
        Note that from the induction assumption in {\bf Step 1}, the above inequality holds for all $m \leq m^{*}$, where $m^{*}\Delta t \leq T$.
	An application of discrete Gronwall's inequality implies
	\begin{equation*}
	    \label{thm:main_sub_final}
		\begin{split}
			& \quad \|  \uep^{m^{*} + 1} \|  + \|  \uen^{m^{*} + 1} \|  + \|  \ueu^{m^{*} + 1} \|  + \Delta t \|  \nabla \uephi^{m^{*} + 1} \|   \\
			& \quad + C_{\delta_0^{*}, M}^{1} (\Delta t \sum_{l=1}^{m^{*} + 1}(\|  \nabla \uep^{l} \| ^2 + \| \nabla  \uen^{l} \| ^2 + \|  \nabla \ueru^{l} \| ^2 ))^{\frac{1}{2}} \\
			& \leq C_{\delta_0^{*}, M}^{2} (\Delta t^{3} + N^{-k}),
		\end{split}
	\end{equation*}
	where $C_{\delta_0^{*}, M}^1, C_{\delta_0^{*}, M}^2$ are positive constants,  independent  of $\Delta t, N$. Then we obtain higher order error estimate for $\up, \un, \upsi, \uuu$ and are able to recover our induction assumption \eqref{thm:res1}-\eqref{thm:res3} with { $\alpha = \frac{11}{4}, \ \beta = k - \frac{1}{4}$ and} $\Delta t, \frac{1}{N}$ chosen small enough. 
	This completes the proof of Theorem \ref{thm:main_sub}.
\end{proof}

\subsection{{Proof of Theorem \ref{thm:main}}}
Now we are ready to prove our main result Theorem \ref{thm:main} {,  which is a direct combination of} Theorem \ref{regularity} and Theorem \ref{thm:main}.
\begin{proof}
    Given $p^{in}, n^{in} \geq \delta_0$ for some $\delta_0 >0$, from Corollary \ref{maximal_principle}, we have solution $p,n \geq \delta_0$ in $\Omega \times [0,T]$. 
    
    Also from Theorem \ref{regularity} and \eqref{interpolation-ineq}, we have
    \begin{align*}
        & \| \pt^4 p \|_{L^{\infty} L^{2}(\Omega \times [0,T])}^{2} \lesssim \| \pt^4 p \|_{L^{2}H^{1}(\Omega \times [0,T])} \| \pt^5 p\|_{L^2 H^{-1}(\Omega \times [0,T])} \leq C(T, \| p^{in} \|_{H^{8}(\Omega)}), \\
        & \| \pt^3 p \|_{L^{\infty} H^{k+1}(\Omega \times [0,T]) }^{2} \lesssim \| \pt^3 p \|_{L^{2}H^{k+2}(\Omega \times [0, T])} \| \pt^4 p \|_{L^{2}H^{k}(\Omega \times [0,T])} \leq C(T, \| p^{in} \|_{H^{k+7}(\Omega)}).
    \end{align*}
    Similar results hold for $(n, \uu)$. Then given $(p^{in}, n^{in}, \uu^{in}) \in H^{k+7}(\Omega) \times H^{k+7}(\Omega) \times H^{k+7}(\Omega)$ with $k \geq 2$, we have
    \begin{equation*}
        (\pt^4 p, \pt^4 n, \pt^4 \uu) \in L^{\infty}(0, T, L^{2}(\Omega)), (\pt^3 p, \pt^3 n, \pt^3 \uu) \in L^{\infty}(0, T, H^{k+1}(\Omega)), \ (k \geq 2).
    \end{equation*}
    Hence assumptions in Lemma \ref{appendix:hoa} are satisfied, and Theorem \ref{thm:main_sub} follows. 

    From the error term definition \eqref{err_term_def} \eqref{err_term_mod_def}, we have 
    \begin{equation}\label{err_relations}
        \begin{split}
            &  \ep^{m} = \uep^{m}-\Delta t p_{\Delta t, 1}^{m}-\Delta t^2 p_{\Delta t, 2}^{m}, \\
            &  \en^{m} = \uen^{m}-\Delta t n_{\Delta t, 1}^{m}-\Delta t^2 n_{\Delta t, 2}^{m}, \\
            &  \eu^{m} = \ueu^{m}-\Delta t \uu_{\Delta t, 1}^{m}-\Delta t^2 \uu_{\Delta t, 2}^{m}.
        \end{split}
    \end{equation}
    From the construction process in the appendix, the modification functions $(p_{\Delta t,i}, n_{\Delta t, i}, \uu_{\Delta t ,i})(i = 1,2)$ have sufficient regularity. 
    Combining Theorem \ref{thm:main_sub} with \eqref{err_relations}, Theorem \ref{thm:main} is proved.
\end{proof}

{\color{blue}
\begin{remark}
As shown in Theorem \ref{thm:main},  the numerical scheme \eqref{scheme:p1} - \eqref{scheme:u3} is a first-order temporal accurate scheme.  There are some recent studies \cite{liu2023second} which extends the PNP scheme to a 2nd order one using Crank-Nicolson type of scheme which preserves positivity, energy stability and unique solvability.  However, it is challenging to extend the current method to a second-order temporal accurate scheme that still preserve those nice properties, and at the same time keeping the PNP system and NS system solving process decoupled.  The major challenges are:
\begin{itemize}
	\item The numerical technique relaxing the conviction term in \eqref{scheme:p1}  \eqref{scheme:n1} by adding $\mathcal{O}(\Delta t)$ term, which decouples the PNP and NS system,  is not extendable to 2nd order scheme. 
	\item To design a unconditionally energy stable Crank-Nicolson type numerical scheme for Navier-Stokes scheme is non-travil \cite{shen1996error}, and it would take further difficulties to decouple the two systems and preserve the energy law at the same time.
\end{itemize}

\end{remark}
}

\section{Numerical Examples}
\label{ch-PNPNS-sec-num}

In this section, we present numerical experiments to validate the stability, positivity, and accuracy of our numerical schemes. We consider periodic boundary conditions and implement the Fourier spectral method in $\Omega = [0, 2\pi]^2$.

\subsection{Accuracy Test}

To verify the accuracy and convergence rate of our numerical scheme, we introduce an artificial exact solution by adding external forces to the PNP-NS system, formulated as
\begin{equation*}
	\begin{aligned}
		& p_t + (\mathbf{u} \cdot \nabla) p = \nabla \cdot (\nabla p + p \nabla \psi) + f_p, \\
		& n_t + (\mathbf{u} \cdot \nabla) n = \nabla \cdot (\nabla n - n \nabla \psi) + f_n, \\
		& -\varepsilon \Delta \psi = p - n, \\
		& \mathbf{u}_t + (\mathbf{u} \cdot \nabla) \mathbf{u} - \Delta \mathbf{u} + \nabla P = -\nabla \psi (p - n) + f_{\mathbf{u}}, \\
		& \nabla \cdot \mathbf{u} = 0,
	\end{aligned}
\end{equation*}
where {\color{blue} we set $\varepsilon = 1$} and the source terms $f_p$, $f_n$, and $f_{\mathbf{u}}$ are determined from the exact solutions
\begin{equation*}
	\left\{
	\begin{aligned}
		p(x, y, t) &= 1.1 + \cos(x) \cos(y) \sin(t), \\
		n(x, y, t) &= 1.1 - \cos(x) \cos(y) \cos(t), \\
		\mathbf{u}(x, y, t) &=
		\begin{pmatrix}
			\sin^2(x) \sin(2y) \sin(t) \\
			-\sin(2x) \sin^2(y) \cos(t)
		\end{pmatrix}, \\
		{\color{blue} P(x, y, t)} &= \cos(x) \cos(y) \sin(t),
	\end{aligned}
	\right.
\end{equation*}
defined in the domain $\Omega \times [0, T] = [0, 2\pi]^2 \times [0, T]$. We use $N = 64$ Fourier modes with different time steps $\Delta t$. Using scheme \eqref{scheme:p1}--\eqref{scheme:u3}, we compute the $L^2$ errors between the numerical solutions and the exact solutions. The results are shown in Table~\ref{Table:accuracy_test}, where first-order convergence rates are observed for the different variables.

\begin{table}[ht]
\caption{$L^2$ errors and convergence orders for the numerical solutions of $p$, $\psi$, $\mathbf{u}$, and $\psi$}
\centering
    \begin{tabular}{ccccccccc}
	\toprule
		$\Delta t$ 				&  	$L^2$ error in $p$ 	&	Order 	& 	$L^2$ error in $\psi$ 		& 	Order	& $L^2$ error in $\mathbf{u}$	& 	Order 	&	$L^2$ error in $\psi$		& 	Order   \\
		\midrule
		$1\times 10^{-2}$				& $1.01\times 10^{-2}$	& 	--		& 	$4.24\times 10^{-3}$	& 	--		& 	$6.33\times 10^{-4}$	&	--		&	$1.21\times 10^{-2}$	&	--		\\
		$\frac{1}{2} \times 10^{-2}$		& $5.11\times 10^{-3}$	& 	0.98		& 	$2.15\times 10^{-3}$	& 	0.98	& 	$3.17\times 10^{-4}$	&	1.00	&	$6.13\times 10^{-3}$	&	0.99	\\
		$\frac{1}{4} \times 10^{-2}$		& $2.57\times 10^{-3}$	& 	0.99		& 	$1.08\times 10^{-3}$	& 	0.99	&	$1.59\times 10^{-4}$	&	1.00	&	$3.08\times 10^{-3}$	&	0.99	\\
		$\frac{1}{8} \times 10^{-2}$		& $1.29\times 10^{-3}$	& 	1.00		& 	$5.45\times 10^{-4}$	& 	0.99	&	$7.93\times 10^{-5}$	&	1.00	&	$1.54\times 10^{-3}$	&	1.00	\\
		$\frac{1}{16} \times 10^{-2}$		& $6.46\times 10^{-4}$	& 	1.00		& 	$2.73\times 10^{-4}$	& 	1.00	&	$3.97\times 10^{-5}$	&	1.00	&	$7.73\times 10^{-4}$	&	1.00	\\
		$\frac{1}{32} \times 10^{-2}$		& $3.23\times 10^{-4}$	& 	1.00		& 	$1.37\times 10^{-4}$	& 	1.00	&	$1.98\times 10^{-5}$	&	1.00	&	$3.87\times 10^{-4}$	&	1.00	\\
	\bottomrule
	\end{tabular}
	\label{Table:accuracy_test}
\end{table}	

\subsection{Property Test}
\label{ch-PNPNS-sec-example2}
We also perform numerical simulations to test the mass-conserving and positivity-preserving properties of our scheme. The positivity-preserving scheme is applied to solve the following PNP-NS system:
\begin{align}\label{pnp:new}
    & p_t + (\mathbf{u} \cdot \nabla) p = \nabla \cdot (\nabla p + p \nabla \psi), \nonumber\\
    & n_t - (\mathbf{u} \cdot \nabla) n = \nabla \cdot (\nabla n - n \nabla \psi), \nonumber\\
    & -\varepsilon \Delta \psi = p - n, \\
    & \mathbf{u}_t + (\mathbf{u} \cdot \nabla) \mathbf{u} + \nabla P - \Delta \mathbf{u}  = -\kappa \nabla \psi (p - n), \nonumber\\
    & \nabla \cdot \mathbf{u} = 0. \nonumber
\end{align}
We set the parameters in \eqref{pnp:new} to be $\varepsilon = 1$ and {\color{blue}$\kappa = 10000$}, with the initial data given by
\begin{equation*}
    \left\{
    \begin{aligned}
         p(x, y, 0) &= 1 + 10^{-6} - \tanh\left(2\big( (x - 0.8 \pi)^2 + (y - 0.8 \pi)^2 - (0.2 \pi)^2 \big)\right), \\
         n(x, y, 0) &= 1 + 10^{-6} - \tanh\left(2\big( (x - 1.2 \pi)^2 + (y - 1.2 \pi)^2 - (0.2 \pi)^2 \big)\right), \\
         \mathbf{u}(x, y, 0) &=
             \begin{pmatrix}
                 0 \\
                 0
             \end{pmatrix}.
    \end{aligned}
    \right.
\end{equation*}
The initial condition indicates that the positive and negative ions accumulate in two regions centered at $(0.8 \pi, 0.8 \pi)$ and $(1.2 \pi, 1.2 \pi)$, respectively.

With time step $\Delta t = 10^{-4}$, in Figure~\ref{fig:PNP_snap}, we plot the profiles of $p - n$ and the velocity field $\mathbf{u}$ at times $T = 0.005$, $0.025$, $0.05$, $0.075$, $0.1$, and $1$. We observe that the positive and negative ions move toward each other and drag the fluid along with them. Later, the outflowing fluid between them prevents the ions from approaching each other further and carries the ions toward the corners. At the end of the computation, the fluid becomes almost electro-neutral.

\begin{figure}[h!]
    \centering
    \includegraphics[width=0.3\textwidth]{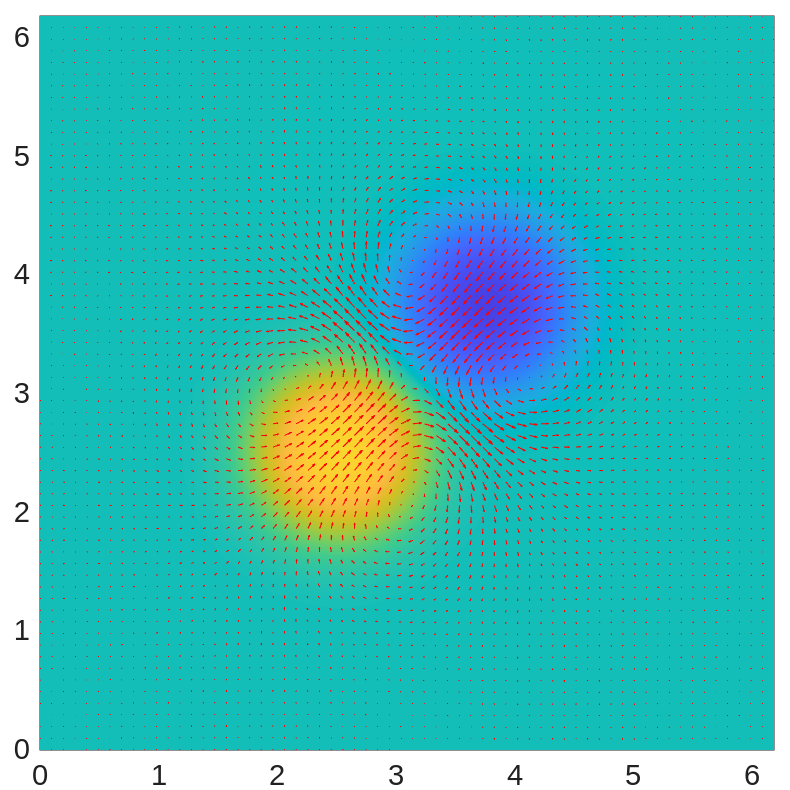}
    \includegraphics[width=0.3\textwidth]{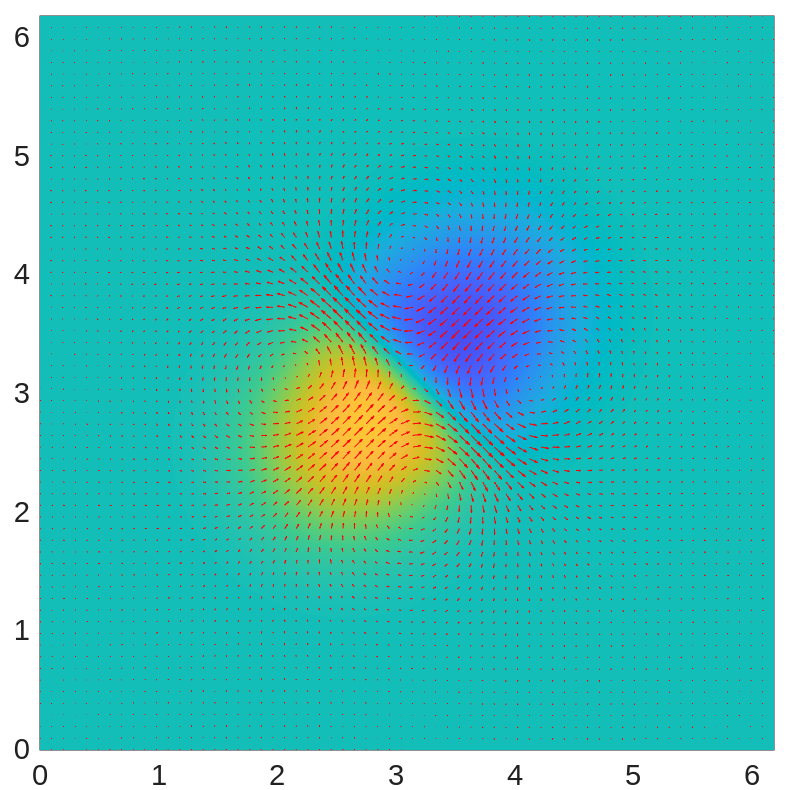}
    \includegraphics[width=0.3\textwidth]{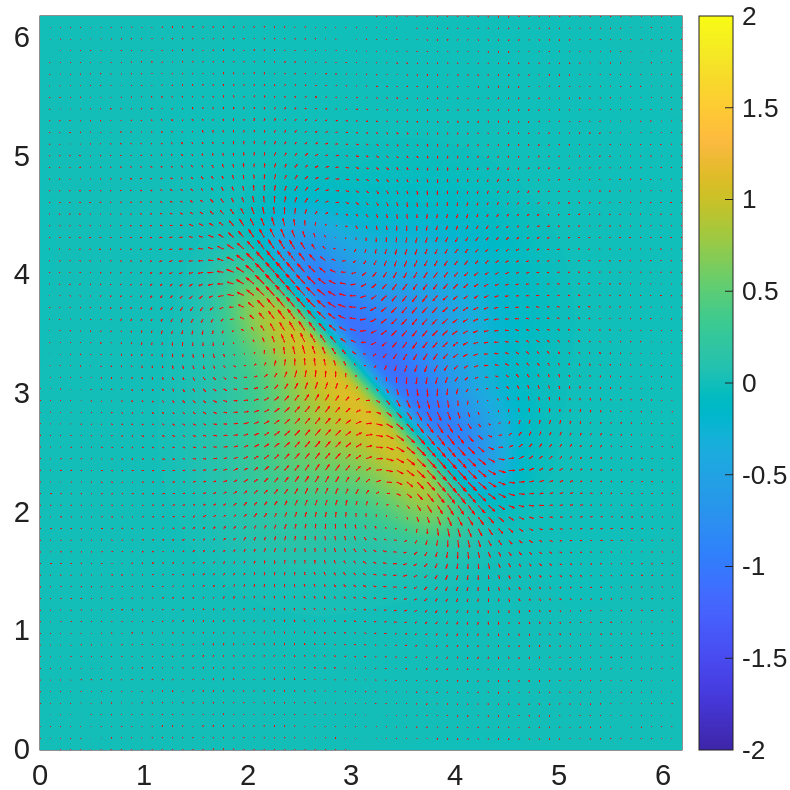}
    \includegraphics[width=0.3\textwidth]{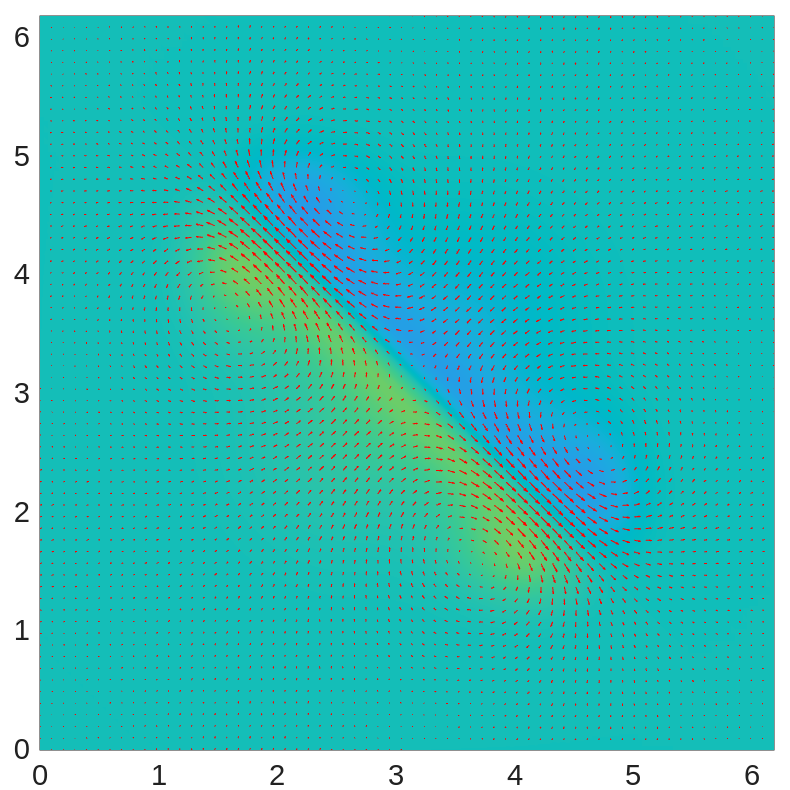}
    \includegraphics[width=0.3\textwidth]{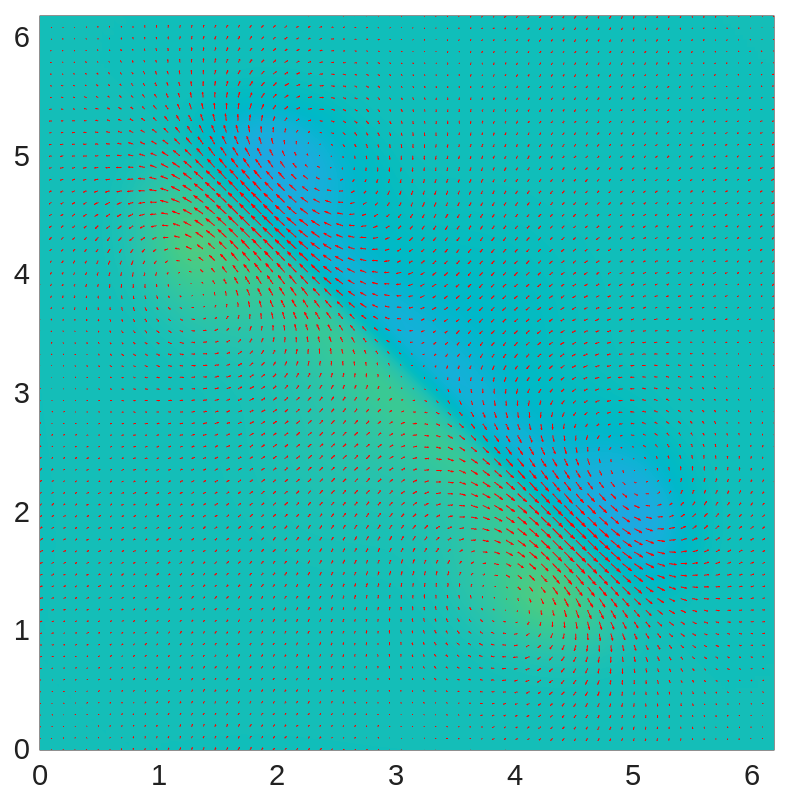}
    \includegraphics[width=0.3\textwidth]{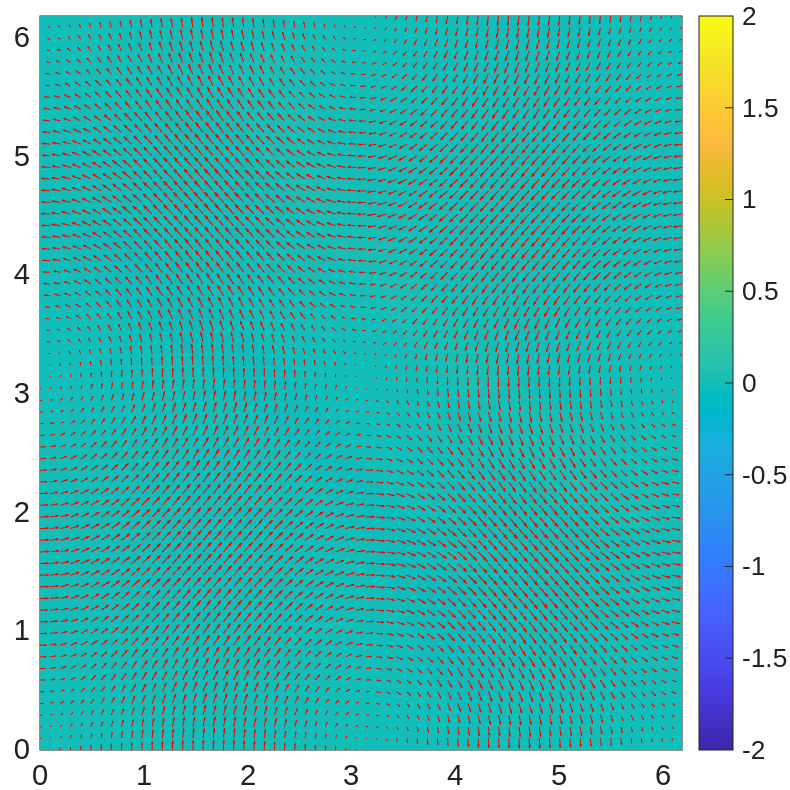}
    \caption{ {\color{blue} Snapshots of $p - n$ and velocity field $\mathbf{u}$ at times $T = 0.005$, $0.025$, $0.05$, $0.075$, $0.1$, and $1$. } }
    \label{fig:PNP_snap}
\end{figure}

We also examine the energy dissipation of the system in Figure~\ref{fig:PNPNS_property}(left), where the system energy is shown to be dissipative as we have proved. We plot the mass change for positive and negative ions in Figure~\ref{fig:PNPNS_property}(middle), showing that the mass of ions is preserved within machine precision. We also plot the minimum and maximum of $(p, n)$ in Figure~\ref{fig:PNPNS_property}(right), demonstrating that the ionic concentrations remain positive throughout the simulation.

\begin{figure}[ht]
    \centering
    \includegraphics[width=0.3\textwidth]{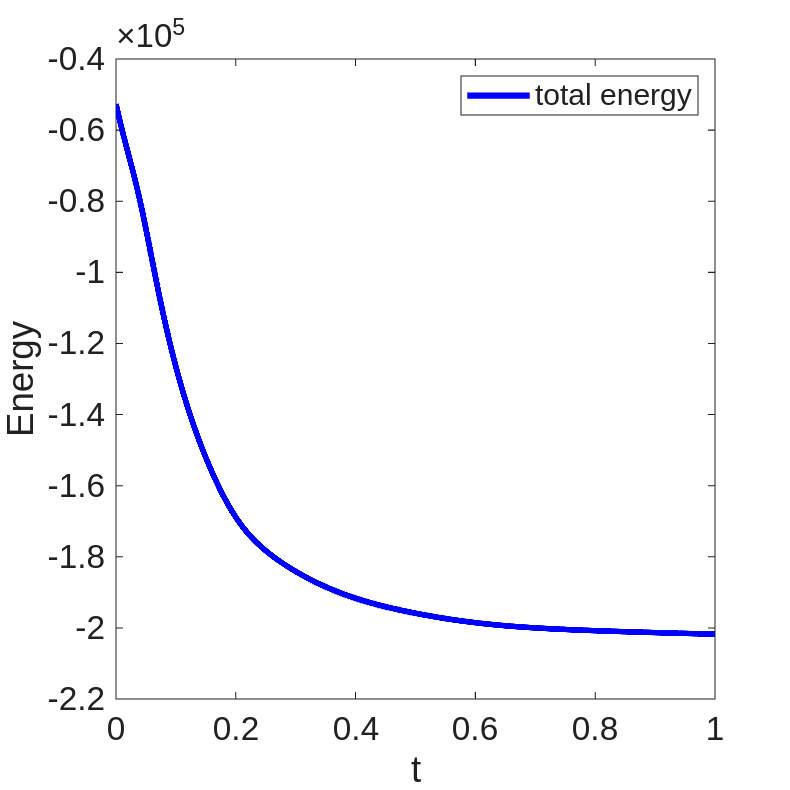}
    \includegraphics[width=0.3\textwidth]{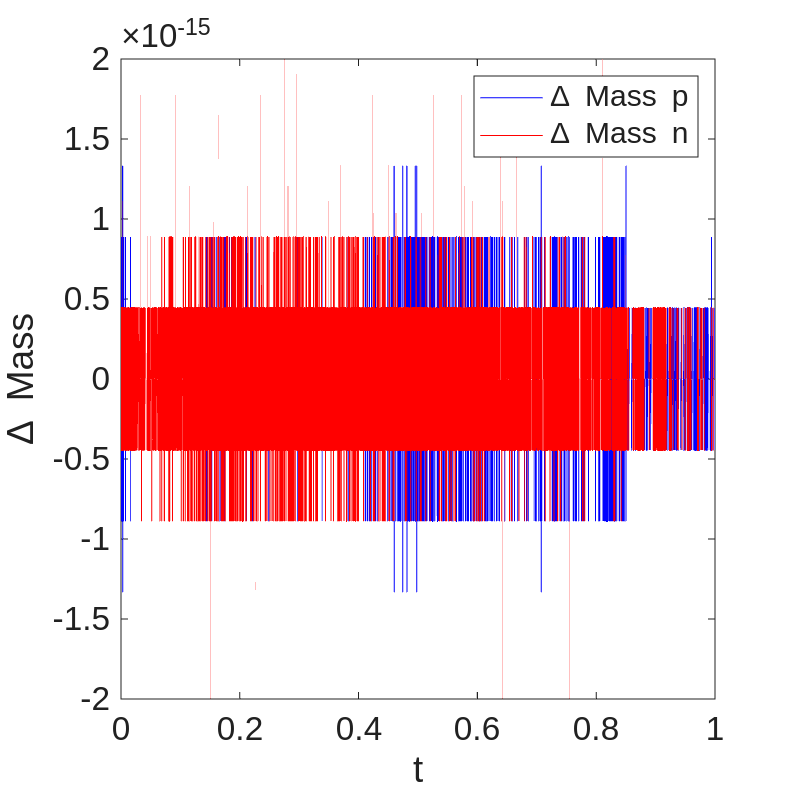}
    \includegraphics[width=0.3\textwidth]{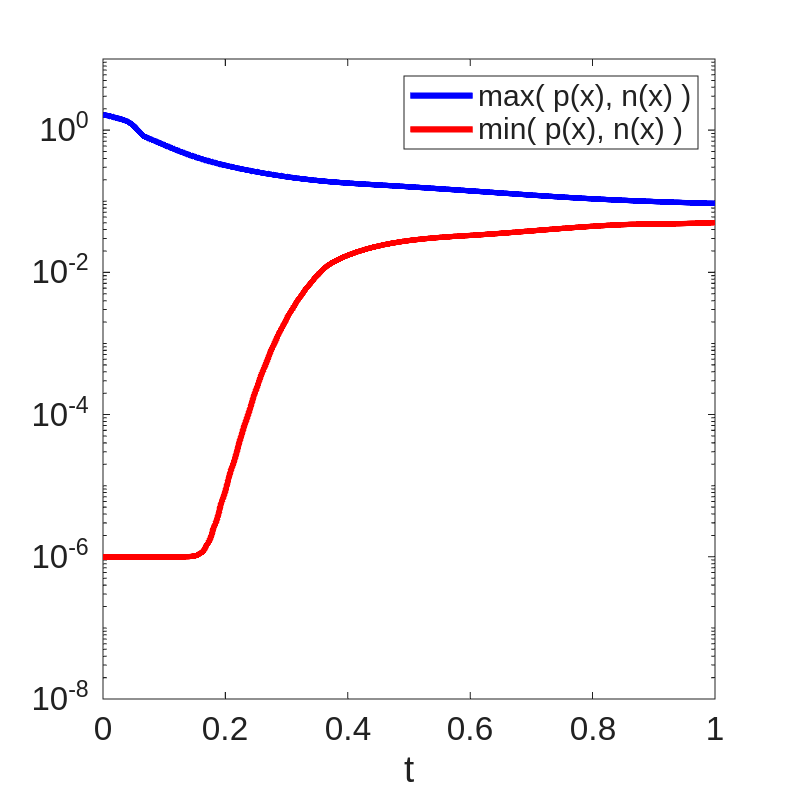}
    \caption{ {\color{blue} Left: Total energy of the PNP-NS system. Middle: Change of mass for $(p, n)$. Right: Lower and upper bounds of $(p, n)$.} }
    \label{fig:PNPNS_property}
\end{figure}

\section{Concluding Remarks}

In this paper, we mainly consider numerical approximations for  the PNP-NS system. Firstly, we give the results of  unique solvability and regularity for the solution of  PNP-NS system with suitable assumptions on initial conditions. To efficiently solve this coupled system, we propose a decoupled, mass-conserving, positivity-preserving and energy stable scheme which can also be unique solvable.  Furthermore, we also carry out a rigorous error analysis for the fully discretized scheme, and derive optimal convergence results. The error analysis mainly depends on the $L^{\infty}$ bounds for  the numerical solutions $n$ and $p$, which are obtained by using a high-order asymptotic expansion for the PNP-NS system combing  with a mathematical induction technique. We also present some numerical examples to validate  the accuracy and stability of our decoupled scheme.

\appendix
\section{Appendix}

\subsection{High order correction}

\begin{lemma}
    \label{appendix:hoa}
	Let $(p, n , \uu)$ be the solution of the PNP-NS system \eqref{gov_equ_1}-\eqref{gov_equ_5}  which satisfies the following properties:
	\begin{enumerate}
		\item The ionic concentrations are strictly positive
		    \begin{equation*}
		            p, n \geq \delta_0 > 0,
                \end{equation*}
		\item The solution satisfies
		      \begin{equation*}
		        \begin{array}{ll}
		            & (\pt^{4} p, \pt^{4} n, \pt^{4}\uu) \in L^{\infty}(0, T; L^{2}(\Omega)), (\pt^{3} p, \pt^{3} n, \pt^{3} \uu) \in L^{\infty}(0, T; H^{k+1}(\Omega)) \ (k \ge 2),
		        \end{array}
		      \end{equation*}
	\end{enumerate}
	then we can construct
	{ correction
	 functions $(p_{\Delta t, i}, n_{\Delta t, i}, \uu_{\Delta t, i}, \phi_{\Delta t, i})(i = 1,2)$
	 depending only on $(p, n, \uu, \psi)$
	 } such that
	  the 
	 { supplementary fields}
	  $(\up, \un, \uuu, \uphi, \umu, \unu, \upsi)$ (defined by \eqref{lem:hoa_vardef}) has higher order consistency truncation error(as defined in \eqref{hoa:p1}-\eqref{hoa:u2}):
	\begin{equation*}
		\rvert \utau^{m+1}_{p}(v_N)\rvert , \rvert \utau^{m+1}_{n}(v_N)\rvert , \rvert \utau^{m+1}_{\uu}(v_N)\rvert  \leq C(\Delta t^3 + N^{-k}) \|  v_N \| _{H^{1}}. 
	\end{equation*}
	Moreover, with $\Delta t, \frac{1}{N}$ chosen small enough, we have 
	\begin{enumerate}
		\item The { supplementary} functions are strictly positive 
		    \begin{equation*}
		    \label{appendix:mod_pos}
		        \up, \un \ge \delta_0^{*} > 0,
		    \end{equation*}
		\item The { supplementary} functions satisfy
		    \begin{equation*}
		        \label{appendix:mod_reg}
		        (\up, \un, \uuu) \in L^{\infty}(0, T, W^{1, \infty}).
		    \end{equation*}
	\end{enumerate}
\end{lemma}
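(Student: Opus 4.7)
The plan is to build the correction functions iteratively in powers of $\Delta t$ so as to successively knock out the leading temporal truncation errors, following the high-order consistent asymptotic expansion strategy of \cite{liu2021positivity}. Substituting the exact solution $(p,n,\uu,\phi)$ of \eqref{gov_equ_mod1}--\eqref{gov_equ_mod5} into the time-discrete scheme \eqref{hoa:p1}--\eqref{hoa:u2} and Taylor expanding around $t = (m+1)\Delta t$ gives, for each equation, a truncation error of the form
\begin{equation*}
    \utau_p^{m+1,(0)} = \Delta t\, R_p^{(1)} + \Delta t^2\, R_p^{(2)} + \mathcal{O}(\Delta t^3),
\end{equation*}
and similarly $\utau_n^{m+1,(0)}$, $\utau_{\uu}^{m+1,(0)}$, where $R_p^{(1)}, R_p^{(2)}$ are explicit polynomial expressions in $p$, $n$, $\uu$, $\phi$ and their spatial/temporal derivatives (these arise from $\frac{u(t+\Delta t)-u(t)}{\Delta t} = u_t + \frac{\Delta t}{2}u_{tt}+\cdots$, from the mobility perturbation $p^m(1+2\Delta t p^m)$ contributing an $\mathcal{O}(\Delta t)$ extra term $2\Delta t(p^m)^2\nabla\mu^{m+1}$, from the explicit convection $\uu^m$ and $\phi^m$ replacing $\uu^{m+1}$, $\phi^{m+1}$, and from the pressure projection step). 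Crucially, under the assumed regularity the coefficients $R_\cdot^{(j)}$ all lie in $L^\infty_t H^{k+1}_x$.

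Next, I would define the first-order correctors $(p_{\Delta t,1}, n_{\Delta t,1}, \uu_{\Delta t,1}, \phi_{\Delta t,1})$ as the solution of the linearization of \eqref{gov_equ_mod1}--\eqref{gov_equ_mod5} about $(p,n,\uu,\phi)$ with forcing $-R_p^{(1)}, -R_n^{(1)}, -R_{\uu}^{(1)}$ and a zero-average/divergence-free constraint compatible with the periodic boundary. This is a linear advection-diffusion system coupled to a linear Stokes problem with smooth coefficients bounded away from zero (since $p,n\ge\delta_0$), so by standard parabolic/Stokes theory it admits a unique solution in $L^\infty_t H^{k+1}_x$ with $\partial_t$-regularity inherited from $\partial_t R_\cdot^{(1)}$. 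Substituting $p+\Delta t\,p_{\Delta t,1}$, etc., into the scheme and expanding again produces a truncation error of the form $\Delta t^2\widetilde R_\cdot^{(2)} + \mathcal{O}(\Delta t^3)$, where $\widetilde R_\cdot^{(2)}$ combines the original $R_\cdot^{(2)}$ with cross-terms from $p_{\Delta t,1}$. I then define $(p_{\Delta t,2}, n_{\Delta t,2}, \uu_{\Delta t,2}, \phi_{\Delta t,2})$ by the same linearized system driven by $-\widetilde R_\cdot^{(2)}$, which again has a unique regular solution.

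Assembling $\up, \un, \uuu, \uphi$ via \eqref{lem:hoa_vardef} and projecting onto $X_N$, the remaining truncation error is $\mathcal{O}(\Delta t^3)$ in time; the spatial part is handled by the standard spectral approximation estimate
\begin{equation*}
    \|(I-\Pi_N)f\|_{H^s} \le C N^{-(k+1-s)}\|f\|_{H^{k+1}},
\end{equation*}
applied to each of $p,n,\uu,\phi$ and the correctors, which are all in $H^{k+1}$ by construction; testing against $v_N\in X_N$ gives the claimed bound $C(\Delta t^3 + N^{-k})\|v_N\|_{H^1}$. Finally, positivity \eqref{appendix:mod_pos} and the regularity \eqref{appendix:mod_reg} follow because $\|\Delta t\, p_{\Delta t,1}+\Delta t^2 p_{\Delta t,2}\|_{L^\infty}\le C\Delta t$ and $\inf p\ge \delta_0$; choosing $\Delta t$ small enough gives $\up\ge\delta_0/2\equiv\delta_0^*$, and the same for $\un$, while $(\up,\un,\uuu)\in L^\infty_t W^{1,\infty}_x$ by Sobolev embedding from the $H^{k+1}$ ($k\ge2$) bounds.

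The main obstacle I expect is a clean accounting of all the $\mathcal{O}(\Delta t)$ contributions, since the scheme is decoupled in a nontrivial way: the explicit treatment of $\uu^m$ in the PNP step, the extra mobility term $2\Delta t (p^m)^2$, and the pressure-correction splitting each produce separate contributions to $R_\cdot^{(1)}$ and $R_\cdot^{(2)}$, and these must be consistently combined before the linearized correctors are well-posed. In particular, one must verify that the compatibility condition $\int_\Omega (p_{\Delta t,i}-n_{\Delta t,i})\,dx = 0$ and $\nabla\cdot\uu_{\Delta t,i}=0$ are preserved at each stage; this follows by integrating the defining equations and using that the forcing terms $R_\cdot^{(j)}$ themselves arise from structure-preserving operations applied to mass-conserving and divergence-free exact solutions, but the verification requires care.
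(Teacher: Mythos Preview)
Your proposal is correct and follows essentially the same approach as the paper: iteratively define correctors $(p_{\Delta t,i},n_{\Delta t,i},\uu_{\Delta t,i},\phi_{\Delta t,i})$ as solutions of linearized PNP-NS type systems with forcing given by the leading-order temporal truncation residuals, substitute back, and repeat once more to reach $\mathcal{O}(\Delta t^3)$. The only minor discrepancy is that in the paper the second corrector system is linearized about the already-updated fields $\up_1=p+\Delta t\,p_{\Delta t,1}$ (rather than about $p$), and correspondingly the positivity constant ends up as $\delta_0^*=\delta_0/4$ rather than $\delta_0/2$; this does not affect the argument.
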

\begin{proof}
Let $(p^{m}, n^{m}, \uu^{m}, \phi^{m})$ be the $L^2$-orthogonal projection of continuous solution $(p, n, \uu, \phi)(m\Delta t)$ onto $X_N \times X_N \times X_N^2 \times X_N$, as defined in \eqref{lem:ea_var_def}. 
From Taylor expansion, the local truncation error may be written into two parts, time discretization error and spatial discretization error, we have 
\begin{align}
	& \langle \frac{p^{m+1}-p^{m}}{\Delta t}, v_N \rangle-\langle p^{m} \uu^{m}, \nabla v_N \rangle + \langle p^{m}(1 + 2\Delta t p^{m}) \nabla \mu^{m+1}, \nabla v_N \rangle
		\nonumber \\
	& \qquad = -\langle \Delta t f^{m+1}_{p,1} + \Delta t^2 f^{m+1}_{p, 2} + \bigO(\Delta t^3) + g^{m+1}_{p}, v_N \rangle,
		\label{lem:hoa_o_p} \\
	& \langle \frac{n^{m+1}-n^{m}}{\Delta t}, v_N \rangle-\langle n^{m} \uu^{m}, \nabla v_N \rangle + \langle n^{m}(1 + 2\Delta t n^{m}) \nabla \nu^{m+1}, \nabla v_N \rangle
		\nonumber \\
	& \qquad = -\langle \Delta t f^{m+1}_{n,1} + \Delta t^2 f^{m+1}_{n, 2} + \bigO(\Delta t^3) + g^{m+1}_{n}, v_N \rangle,
		\label{lem:hoa_o_n} \\
	& \langle \frac{R_N\uu^{m+1}-\uu^{m}}{\Delta t}, v_N \rangle +  \langle (\uu^{m} \cdot \nabla)R_N\uu^{m+1}, v_N \rangle + \langle \nabla R_N\uu^{m+1}, \nabla v_N \rangle + \langle \nabla \phi^{m}, v_N \rangle \nonumber \\
	& \quad + \langle p^{m} \nabla \mu^{m+1} + n^{m} \nabla \nu^{m+1}, v_N \rangle 
	  =	-\langle \Delta t f^{m+1}_{\uu,1} + \Delta t^2 f^{m+1}_{\uu, 2} + \bigO(\Delta t^3) + g^{m+1}_{\uu}, v_N \rangle,
		\label{lem:hoa_o_u}
\end{align}
where $(f_{p, i}^{m+1}, f_{n, i}^{m+1}, f_{\uu, i}^{m+1})_{i = 1,2}$ are the temporal part of truncation error and $(g_p^{m+1}, g_n^{m+1}, g_{\uu}^{m+1})$ are the spatial part of the truncation error. From Taylor expansion, we can compute 
\begin{align*}
     f_{p, 1} &= \frac{1}{2}\ptt p + \divergence(\pt(p \uu))-\divergence((\pt p-2 p^2)  \nabla (\ln{p} + \psi)),  \\
     f_{n, 1} &= \frac{1}{2}\ptt n + \divergence(\pt(n \uu))-\divergence((\pt n-2 n^2)  \nabla (\ln{n}-\psi)), \\
     f_{\uu, 1} &= \frac{1}{2} \ptt \uu + \pt \uu \cdot \nabla \uu + \pt p \nabla (\ln{p} + \psi) + \pt n \nabla (\ln{n}-\psi); 
\end{align*}
\begin{align*}
    f_{p, 2} &= -\frac{1}{6} \pttt p-\frac{1}{2} \divergence (\ptt(p \uu)) + \divergence \big( (\frac{1}{2} \ptt p-4 p \pt p) \nabla(\ln{p} + \psi) \big) \\
    f_{n, 2} &= -\frac{1}{6} \pttt n-\frac{1}{2} \divergence (\ptt(n \uu)) + \divergence \big( (\frac{1}{2} \ptt n-4 n \pt n) \nabla(\ln{n}-\psi) \big) \\
    f_{\uu, 2} &= -\frac{1}{6} \pttt \uu-\frac{1}{2} \ptt \uu \cdot \nabla \uu-(\uu \cdot \nabla) \nabla \pt \phi-\Delta (\nabla \pt \phi) \\
            & \quad-\frac{1}{2} \ptt p \nabla (\ln{p} + \psi)-\frac{1}{2} \ptt n \nabla (\ln{n}-\psi)
\end{align*}
and 
\begin{align*}
    \langle g_{p}^{m+1}, v_N \rangle &\lesssim N^{-k} (\| p_t \|_{H^{k}} + \| p \uu \|_{H^{k}} +  \| p \nabla (\ln{p} + \psi) \|_{H^{k}}) \big((m+1)\Delta t\big) \| v_N \|_{H^{1}}, \\ 
    \langle g_{n}^{m+1}, v_N \rangle &\lesssim N^{-k} (\| n_t \|_{H^{k}}  + \| n \uu \|_{H^{k}} +  \| n \nabla (\ln{n}-\psi) \|_{H^{k}}) 
    \big((m+1)\Delta t\big) \| v_N \|_{H^{1}}, \\
    \langle g_{\uu}^{m+1}, v_N \rangle &\lesssim N^{-k} \big( \| \uu_t \|_{H^{k}} + \| (\uu \cdot \nabla) \uu \|_{H^{k}} + \| \nabla \uu \|_{H^{k}} + \| \nabla \psi \|_{H^{k}} \\
    & \quad \quad \quad + \| p\nabla (\ln{p} + \psi) + n \nabla(\ln{n}-\psi \|_{H^{k}}  \big) \big((m+1)\Delta t\big) \| v_N \|_{H^{1}}.
\end{align*}
Applying the regularity assumption \eqref{hoa:reg}, we have
\begin{equation}
    \label{lem:hoa_reg_f}
    \begin{split}
        & \pt^2 f_{p,1} \in L^{\infty}(0, T;L^{2}(\Omega)), \ \pt f_{p,1} \in L^{\infty}(0, T; H^{k+1}(\Omega)), \\
        &\pt^2 f_{n,1} \in L^{\infty}(0, T;L^{2}(\Omega)), \ \pt f_{n,1} \in L^{\infty}(0, T; H^{k+1}(\Omega)), \\
        &\pt^2 f_{\uu,1} \in L^{\infty}(0, T;L^{2}(\Omega)), \ \pt f_{\uu,1} \in L^{\infty}(0, T; H^{k+1}(\Omega)),
    \end{split}
\end{equation}
\begin{equation*}
    \langle g^{m+1}_{p, 1}, v_N \rangle, \langle g^{m+1}_{n, 1}, v_N \rangle, \langle g^{m+1}_{\uu, 1}, v_N \rangle
    \lesssim 
    N^{-k} \|  v_N \| _{H^{1}}.
\end{equation*}
With those $(f_{p, 1}, f_{n, 1}, f_{\uu, 1})$, we construct and solve the leading order temporal correction function $(p_{\Delta t, 1}, n_{\Delta t, 1}, \uu_{\Delta t, 1}, \phi_{\Delta t, 1})$ from the following equation: 
\begin{align}
	 \pt p_{\Delta t, 1} 
		& = \divergence(p \nabla (\frac{p_{\Delta t, 1}}{p} + \psi_{\Delta t, 1}) + p_{\Delta t, 1} \nabla (\ln{p} + \psi)) \nonumber \\
		&\quad-\divergence(p_{\Delta t, 1} \uu + p \uu_{\Delta t, 1}) -f_{p,1}, \label{lem:hoa_pf_cont_p1} \\ 
	 \pt n_{\Delta t, 1} 
		& = \divergence(n \nabla (\frac{n_{\Delta t, 1}}{n}-\psi_{\Delta t, 1}) + n_{\Delta t, 1} \nabla (\ln{n}-\psi)) \nonumber \\
		&\quad -\divergence(n_{\Delta t, 1} \uu + n \uu_{\Delta t, 1}) -f_{n,1}, \label{lem:hoa_pf_cont_n1} \\ 
	-\Delta \psi_{\Delta t, 1} & = p_{\Delta t, 1}-n_{\Delta t, 1}, \label{lem:hoa_pf_cont_psi1} \\ 
         \pt \uu_{\Delta t, 1}
		& = \Delta \uu_{\Delta t, 1}-\nabla \phi_{\Delta t, 1}-(\uu \cdot \nabla) \uu_{\Delta t, 1}-(\uu_{\Delta t, 1} \cdot \nabla) \uu \nonumber \\
		&\qquad-p\nabla(\frac{p_{\Delta t, 1}}{p} + \psi_{\Delta t, 1})-p_{\Delta t, 1} \nabla(\ln{p} + \psi) \nonumber \\
		&\qquad-n\nabla(\frac{n_{\Delta t, 1}}{n}-\psi_{\Delta t, 1})-n_{\Delta t, 1} \nabla(\ln{n}-\psi)-f_{\uu, 1}, \label{lem:hoa_pf_cont_u1} \\
	 \divergence \uu_{\Delta t, 1} &= 0, \label{lem:hoa_pf_cont_div1}
\end{align}
subject to the periodic boundary condition and zero initial condition. 
The PDE system \eqref{lem:hoa_pf_cont_p1}-\eqref{lem:hoa_pf_cont_div1} is very similar to the PNP-NS system \eqref{gov_equ_1}-\eqref{gov_equ_5}, and the existence of solution could be established similarly. Moreover, given the regularity of $(p, n, \uu, \phi)$ and $(f_{p, 1}, f_{n, 1}, f_{\uu, 1})$ in \eqref{lem:hoa_reg_f}, the solution satisfies
\begin{equation}
	\label{hoa:func_mod1_regularity}
	(\pt^3 p_{\Delta t, 1}, \pt^3 n_{\Delta t, 1}, \pt^3 \uu_{\Delta t, 1}) \in L^{\infty}(0, T, L^{2}(\Omega)), \ 
	(\pt^2 p_{\Delta t, 1}, \pt^2 n_{\Delta t, 1}, \pt^2 \uu_{\Delta t, 1}) \in L^{\infty}(0, T, H^{k+1}(\Omega)).
\end{equation}
The discretization of the above system implies that 
\begin{align}
	\langle -f^{m+1}_{p,1}, v_N \rangle  
		& = \langle \frac{p^{m+1}_{\Delta t, 1}-p^{m}_{\Delta t, 1}}{\Delta t}, v_N \rangle  
			- \langle p^{m}_{\Delta t, 1} \uu^{m} + p^{m} \uu^{m}_{\Delta t, 1}, \nabla v_N \rangle 
			\nonumber \\
		& \quad + \langle p^{m}(1 + 2\Delta t p^{m}) \nabla (\frac{p^{m+1}_{\Delta t, 1}}{p^{m+1}} + \psi^{m+1}_{\Delta t, 1}) \nonumber\\
		& \quad	\quad + p^{m}_{\Delta t, 1}(1 + 2\Delta t p^{m}) \nabla (\ln{p^{m+1} + \psi^{m+1}}), \nabla v_N \rangle \nonumber  \\
		& \quad	- \langle \Delta t f_{p_{\Delta t, 1}, 1}^{m+1} + g_{p_{\Delta t, 1}}^{m+1}+ \bigO(\Delta t^2) , v_N \rangle   \label{lem:hoa_pf_solve_p1},	\\
	\langle -f^{m+1}_{n,1}, v_N \rangle 
		& = \langle \frac{n^{m+1}_{\Delta t, 1}-n^{m}_{\Delta t, 1}}{\Delta t}, v_N \rangle
			- \langle n^{m}_{\Delta t, 1} \uu^{m} + n^{m} \uu^{m}_{\Delta t, 1}, \nabla v_N \rangle \nonumber \\
		& \quad + \langle n^{m}(1 + 2\Delta t n^{m}) \nabla (\frac{n^{m+1}_{\Delta t, 1}}{n^{m+1}}-\psi^{m+1}_{\Delta t, 1}) \nonumber\\
		& \quad \quad	+ n^{m}_{\Delta t, 1}(1 + 2\Delta t n^{m}) \nabla (\ln{n^{m+1}-\psi^{m+1}}), \nabla v_N \rangle \nonumber \\
		& \quad	-\langle  \Delta t f_{n_{\Delta t, 1}, 1}^{m+1} + g_{n_{\Delta t, 1}}^{m+1} + \bigO(\Delta t^2) , v_N \rangle  \label{lem:hoa_pf_solve_n1}, \\
	\langle -f^{m+1}_{\uu, 1}, v_N \rangle 
		&= \langle \frac{R_N\uu^{m+1}_{\Delta t, 1}-\uu^{m}_{\Delta t, 1}}{\Delta t}, v_N \rangle 
			+ \langle \nabla R_N\uu^{m+1}_{\Delta t, 1} , \nabla v_N \rangle 
				+ \langle \nabla \phi^{m}_{\Delta t, 1}, v_N \rangle \nonumber  \\
		&\quad + \langle (\uu^{m} \cdot \nabla) R_N\uu^{m+1}_{\Delta t, 1} + (\uu^{m}_{\Delta t, 1} \cdot \nabla) R_N\uu^{m+1}, v_N \rangle  \nonumber  \\
		&\quad	+ \langle p^{m} \nabla (\frac{p^{m+1}_{\Delta t, 1}}{p^{m+1}} + \psi^{m+1}_{\Delta t, 1})
							+ p^{m}_{\Delta t, 1} \nabla (\ln{p^{m+1}} + \psi^{m+1}), v_N \rangle \nonumber  \\
		&\quad + \langle n^{m} \nabla (\frac{n^{m+1}_{\Delta t, 1}}{n^{m+1}}-\psi^{m+1}_{\Delta t, 1})
							+ n^{m}_{\Delta t, 1} \nabla (\ln{n^{m+1}}-\psi^{m+1}), v_N \rangle \nonumber \\
		& \quad	- \langle  \Delta t f_{\uu_{\Delta t, 1}, 1}^{m+1} +g_{\uu_{\Delta t, 1}}^{m+1} +  \bigO(\Delta t^2), v_N \rangle \label{lem:hoa_pf_solve_u1}, \\
	\langle \nabla \psi^{m}_{\Delta t, 1}, \nabla v_N & \rangle 
		 = \langle p^{m}_{\Delta t, 1}-n^{m}_{\Delta t, 1}, v_N \rangle,  \\
	\langle \uu^{m}_{\Delta t, 1}, \nabla v_N & \rangle = 0.
\end{align}
where $(f_{p_{\Delta t, 1}, 1}, f_{n_{\Delta t, 1}, 1}, f_{\uu_{\Delta t, 1}, 1})$ and $(g_{p_{\Delta t, 1}}, g_{n_{\Delta t, 1}}, g_{\uu_{\Delta t, 1}})$ are the temporal part and spatial part of the truncation error, from Taylor expansion, we have 
\begin{align*}
    f_{p_{\Delta t, 1}, 1} &=  \frac{1}{2} \ptt p_{\Delta t, 1} + \divergence \big( \pt(p_{\Delta t,1} \uu + p \uu_{\Delta t,1}) \big)-\divergence \big( (\pt p-2 p^2) \nabla(\frac{p_{\Delta t,1}}{p} + \psi_{\Delta t,1}) \big) \nonumber \\
        & \quad-\divergence \big( (\pt p-4 p_{\Delta t, 1} p) \nabla(\ln{p} + \psi) \big) \\
    f_{n_{\Delta t, 1}, 1} &= \frac{1}{2} \ptt n_{\Delta t, 1} + \divergence \big( \pt(n_{\Delta t, 1} \uu + n \uu_{\Delta t, 1}) \big)-\divergence \big( (\pt n-2n^2) \nabla(\frac{n_{\Delta t, 1}}{n}-\psi_{\Delta t, 1}) \big) \nonumber \\
        & \quad-\divergence \big( (\pt n-4 n_{\Delta t, 1} n) \nabla (\ln{n}-\psi) \big) \\ 
    f_{\uu_{\Delta t, 1}, 1} &= \frac{1}{2} \ptt \uu_{\Delta t, 1} +
         (\pt \uu \cdot \nabla) \uu_{\Delta t, 1} +
         (\pt \uu_{\Delta t, 1} \cdot \nabla) \uu \\
        &\quad + \pt p \nabla(\frac{p_{\Delta t, 1}}{p} + \psi_{\Delta t, 1}) + \pt p_{\Delta t, 1} \nabla (\ln{p + \psi}) \nonumber \\
        &\quad + \pt n \nabla (\frac{n_{\Delta t, 1}}{n}-\psi_{\Delta t, 1}) + \pt n_{\Delta t, 1} \nabla (\ln{n-\psi}) 
\end{align*}
and 
\begin{equation*}
    \begin{array}{ll}
         \langle g_{p, \Delta t, 1}^{m+1}, v_N \rangle & \lesssim N^{-k} \big( \| \pt p_{\Delta t, 1}  \|_{L^{\infty}_{t}H^{k}} + \| p_{\Delta t, 1}\|_{L^{\infty}_{t} H^{k+1}} + \| p \nabla \psi_{\Delta t, 1} \|_{L^{\infty}_{t} H^{k}} \vspace{0.3em} \\
            & \quad + \| p_{\Delta t, 1} \nabla p \|_{L^{\infty}_{t} H^{k}} + \| f_{p, 1} \|_{L^{\infty}_{t} H^{k}}  \big) \| v_N \|_{H^{1}} \vspace{0.5em}, \\
         \langle g_{n, \Delta t, 1}^{m+1}, v_N \rangle & \lesssim N^{-k} \big( \| \pt n_{\Delta t, 1} \|_{L^{\infty}_{t}H^{k}} + \| n_{\Delta t, 1}\|_{L^{\infty}_{t} H^{k+1}} + \| n \nabla \psi_{\Delta t, 1} \|_{L^{\infty}_{t} H^{k}} \vspace{0.3em} \\
            & \quad + \| p_{\Delta t, 1} \nabla p \|_{L^{\infty}_{t} H^{k}} + \| f_{p, 1} \|_{L^{\infty}_{t} H^{k}}  \big) \| v_N \|_{H^{1}} \vspace{0.5em}, \\
         \langle g_{\uu, \Delta t, 1}^{m+1}, v_N \rangle & \lesssim N^{-k} \big( \| \pt \uu_{\Delta t, 1} \|_{L^{\infty}_{t}H^{k}} + \| \nabla \uu_{\Delta t, 1} \|_{L^{\infty}_{t}H^{k}} + \| \nabla \psi_{\Delta t, 1} \|_{L^{\infty}_{t}H^{k}} \vspace{0.3em} \\
            & \quad + \| (\uu_{\Delta t, 1} \cdot \nabla) \uu \|_{L^{\infty}_{t}H^{k}}
            + \| (\uu \cdot \nabla) \uu_{\Delta t, 1} \|_{L^{\infty}_{t}H^{k}}
            + \| p \nabla \psi_{\Delta t, 1} \|_{H^{k}} + \| p_{\Delta t, 1} \nabla \psi_{\Delta t, 1} \|_{L^{\infty}_{t}H^{k}} \vspace{0.3em} \\
            & \quad 
            + \| n \nabla \psi_{\Delta t, 1} \|_{L^{\infty}_{t}H^{k}} + \| n_{\Delta t, 1} \nabla \psi_{\Delta t, 1} \|_{L^{\infty}_{t}H^{k}} + \| f_{\uu, 1} \|_{L^{\infty}_{t}H^{k}} \big) \| v_N \|_{H^{1}}.
    \end{array}
\end{equation*}
From the regularity result in \eqref{lem:hoa_reg_f} and \eqref{hoa:func_mod1_regularity}, we have 
\begin{equation*}
    \langle g^{m+1}_{p, 2}, v_N \rangle, \langle g^{m+1}_{n, 2}, v_N \rangle, \langle g^{m+1}_{\uu, 2}, v_N \rangle
    \lesssim 
    N^{-k} \|  v_N \| _{H^{1}}.
\end{equation*}
Combining \eqref{lem:hoa_o_p}-\eqref{lem:hoa_o_u} and \eqref{lem:hoa_pf_solve_p1}-\eqref{lem:hoa_pf_solve_u1} leads to the second order temporal local truncation error for $\up_{1} = \projN (p + \Delta t p_{\Delta t, 1}), \ \un_{1} = \projN (n + \Delta t n_{\Delta t, 1}), \ \uuu_{1} = \projN (\uu + \Delta t \uu_{\Delta t, 1}), \ \uphi_{1} = \projN (\phi + \Delta t \phi_{\Delta t, 1})$:
\begin{align}
	& \langle \frac{\up_{1}^{m+1}-\up_{1}^{m}}{\Delta t}, v_N \rangle-\langle \up_{1}^{m} \uuu_{1}^{m}, \nabla v_N \rangle + \langle \up_{1}^{m}(1 + 2\Delta t \up_{1}^{m}) \nabla \umu_{1}^{m+1}, \nabla v_N \rangle 
		\nonumber \\
	& \qquad = -\langle \Delta t^2 f^{m+1}_{\up_{1},2} + \bigO(\Delta t^3) + \bigO(N^{-k}), v_N \rangle 
		\label{lem:hoa_o_p_1}, \\
	& \langle \frac{\un_{1}^{m+1}-\un_{1}^{m}}{\Delta t}, v_N \rangle-\langle \un_{1}^{m} \uuu_{1}^{m}, \nabla v_N \rangle + \langle \un_{1}^{m}(1 + 2\Delta t \un_{1}^{m}) \nabla \unu_{1}^{m+1}, \nabla v_N \rangle 
		\nonumber \\
	& \qquad = -\langle \Delta t^2 f^{m+1}_{\un_{1},2} + \bigO(\Delta t^3) + \bigO(N^{-k}), v_N \rangle 
		\label{lem:hoa_o_n_1}, \\
	& \langle \frac{R_N\uuu_{1}^{m+1}-\uuu_{1}^{m}}{\Delta t}, v_N \rangle +  \langle (\uuu_{1}^{m} \cdot \nabla)R_N\uuu_{1}^{m+1}, v_N \rangle + \langle \nabla R_N\uuu_{1}^{m+1}, \nabla v_N \rangle + \langle \nabla \uphi_{1}^{m}, v_N \rangle \nonumber \\
	& + \langle \up_{1}^{m} \nabla \umu_{1}^{m+1} + \un_{1}^{m} \nabla \unu_{1}^{m+1}, v_N \rangle 
	  =	-\langle \Delta t^2 f^{m+1}_{\uuu_{1},2} + \bigO(\Delta t^3) + \bigO(N^{-k}), v_N \rangle
		\label{lem:hoa_o_u_1},
\end{align}
where 
\begin{align*}
	& \upsi_{1} = \projN [(-\Delta)^{-1}(\up_{1}-\un_{1})], \\
	& \umu_{1} = \projN (\ln{\up_{1}} + \upsi_{1}), \ \unu_{1} = \projN (\ln{\un_{1}}-\upsi_{1}),
\end{align*}
and 
\begin{align*}
    f_{\up_{1}, 2} &= f_{p, 2} + f_{p_{\Delta t, 1}, 1} + \divergence(p_{\Delta t, 1} \uu_{\Delta t, 1})-\divergence(2 p p_{\Delta t, 1} \nabla (\ln{p} + \psi)) \nonumber \\
    & \quad-\divergence(p \nabla (\frac{p_{\Delta t, 1}}{p})^2 ) 
    + \divergence (p_{\Delta t, 1} \nabla (\frac{p_{\Delta t, 1}}{p} + \psi)), \\
    f_{\un_{1}, 2} &= f_{n, 2} + f_{n_{\Delta t, 1}, 1} + \divergence(n_{\Delta t, 1} \uu_{\Delta t, 1})-\divergence(2 n n_{\Delta t, 1} \nabla (\ln{n}-\psi)) \nonumber \\
    & \quad-\divergence(n \nabla (\frac{n_{\Delta t, 1}}{n})^2 ) 
   -\divergence (n_{\Delta t, 1} \nabla (\frac{n_{\Delta t, 1}}{n}-\psi)), \\
    f_{\uuu_{1}, 2} &= f_{\uu, 2} + f_{\uu_{\Delta t, 1}, 1} + (\uu_{\Delta t, 1} \cdot \nabla) \uu_{\Delta t, 1} \nonumber \\
        &\quad + p_{\Delta t, 1} \nabla(\frac{p_{\Delta t, 1}}{p} + \psi_{\Delta t, 1}) 
       -p\nabla ((\frac{p_{\Delta t, 1}}{p})^2) \nonumber \\
        &\quad + n_{\Delta t, 1} \nabla (\frac{n_{\Delta t, 1}}{n}-\psi_{\Delta t, 1})-n \nabla ((\frac{n_{\Delta t, 1}}{n})^2).
\end{align*}
Since $(p_{\Delta t, 1}, n_{\Delta t, 1})$ are bounded, we may choose $\Delta t, \frac{1}{N}$ so small that $\up_1, \un_1 > \frac{\delta_0}{2} > 0$. 
And $(f_{\up_{1},2}^{m+1}, f_{\un_{1}, 2}^{m+1}, f_{\uuu_{1},2}^{m+1})$ are the temporal projection of functions $(f_{\up_{1}, 2}, f_{\un_{1}, 2}, f_{\uuu_{1},2})$ onto $X_N \times X_N \times X_N^2$.
From \eqref{hoa:reg} \eqref{hoa:func_mod1_regularity} we have
\begin{equation*}
	(\pt f_{\up_{1},2}, \pt f_{\un_{1},2}, \pt f_{\uuu_{1},2}) \in L^{\infty}(0, T; L^{2}(\Omega)),\  (f_{\up_{1}, 2}, f_{\un_{1},2}, f_{\uuu_{1}, 2}) \in L^{\infty}(0, T; H^{k+1}(\Omega)).
\end{equation*}
Similarly, the next order temporal correction function $(p_{\Delta t, 2}, n_{\Delta t, 2}, \uu_{\Delta t, 2}, \phi_{\Delta t, 2})$ is given by the following system: 
\begin{align}
	& \pt p_{\Delta t, 2} 
		= \divergence(\up_{1} \nabla (\frac{p_{\Delta t, 2}}{\up_{1}} + \psi_{\Delta t, 2}) + p_{\Delta t, 2} \nabla (\ln{\up_{1}} + \upsi_{1})) \nonumber \\
		&\qquad -\divergence(p_{\Delta t, 2} \uuu_{1} + \up_{1} \uu_{\Delta t, 2})-f_{\up_{1},2} \label{lem:hoa_pf_cont_p2}, \\ 
	& \pt n_{\Delta t, 2} 
		= \divergence(\un_{1} \nabla (\frac{n_{\Delta t, 2}}{\un_{1}}-\psi_{\Delta t, 2}) + n_{\Delta t, 2} \nabla (\ln{\un_{1}}-\upsi_{1})) \nonumber \\
		&\qquad -\divergence(n_{\Delta t, 2} \uuu_{1} + \un_{1} \uu_{\Delta t, 2})-f_{\un_{1},2} \label{lem:hoa_pf_cont_n2}, \\ 
	& -\Delta \psi_{\Delta t, 2} = p_{\Delta t, 2}-n_{\Delta t, 2} \label{lem:hoa_pf_cont_psi2}, \\ 
	& \pt \uu_{\Delta t, 2}
		= \Delta \uu_{\Delta t, 2}-\nabla \phi_{\Delta t, 2}-(\uuu_{1} \cdot \nabla) \uu_{\Delta t, 2}-(\uu_{\Delta t, 2} \cdot \nabla) \uuu_{1} \nonumber \\
		&\qquad-\up_{1} \nabla(\frac{p_{\Delta t, 2}}{\up_{1}} + \psi_{\Delta t, 2})-p_{\Delta t, 2} \nabla(\ln{\up_{1}} + \upsi_{1}) \nonumber \\
		&\qquad-\un_{1} \nabla(\frac{n_{\Delta t, 2}}{\un_{1}}-\psi_{\Delta t, 2})-n_{\Delta t, 2} \nabla(\ln{\un_{1}}-\upsi_{1})-f_{\uuu_{1}, 2} \label{lem:hoa_pf_cont_u2}, \\
	& \divergence \uuu_{\Delta t, 2} = 0 \label{lem:hoa_pf_cont_div2}.
\end{align}
subject to the periodic boundary condition and zero initial condition.
Then we have 
\begin{equation*}
	(\pt^2 p_{\Delta t, 2}, \pt^2 n_{\Delta t, 2}, \pt^2 \uu_{\Delta t, 2}) \in L^{\infty}(0, T, L^{2}(\Omega)), \ (\pt p_{\Delta t, 2}, \pt n_{\Delta t, 2}, \pt \uu_{\Delta t, 2}) \in L^{\infty}(0, T, H^{k+1}(\Omega)).
\end{equation*}
The discretization of the above system implies that 
\begin{align}
	\langle -f^{m+1}_{\up_{1},2}, v_N \rangle  
		& = \langle \frac{p^{m+1}_{\Delta t, 2}-p^{m}_{\Delta t, 2}}{\Delta t}, v_N \rangle  
			- \langle p^{m}_{\Delta t, 2} \uuu_{1}^{m} + \up_{1}^{m} \uu^{m}_{\Delta t, 2}, \nabla v_N \rangle 
			\nonumber \\
		& \quad + \langle \up^{m}_{1}(1 + 2\Delta t \up^{m}_{1}) \nabla (\frac{p^{m+1}_{\Delta t, 2}}{\up^{m+1}_{1}} + \psi^{m+1}_{\Delta t, 2}) \nonumber \\
		& \quad	+ p^{m}_{\Delta t, 2}(1 + 4\Delta t \up^{m}_{1}) \nabla (\ln{\up^{m+1}_{1} + \upsi^{m+1}_{1}}), \nabla v_N \rangle \nonumber  \\
		& \quad + \bigO(\Delta t) + \bigO(N^{-k})  \label{lem:hoa_pf_solve_p2}, \\
	\langle -f^{m+1}_{\un_{1},2}, v_N \rangle 
		& = \langle \frac{n^{m+1}_{\Delta t, 2}-n^{m}_{\Delta t, 2}}{\Delta t}, v_N \rangle  
			- \langle n^{m}_{\Delta t, 2} \uuu_{1}^{m} + \un^{m}_{1} \uu^{m}_{\Delta t, 2}, \nabla v_N \rangle  
			\nonumber \\
		& \quad + \langle \un^{m}_{1}(1 + 2\Delta t \un^{m}_{1}) \nabla (\frac{n^{m+1}_{\Delta t, 2}}{\un^{m+1}_{1}}-\psi^{m+1}_{\Delta t, 2}) \nonumber \\
		& \quad	+ n^{m}_{\Delta t, 2}(1 + 4\Delta t \un_{1}^{m}) \nabla (\ln{\un_{1}^{m+1}-\upsi_{1}^{m+1}}), \nabla v_N \rangle \nonumber \\
		& \quad	+ \bigO(\Delta t) + \bigO(N^{-k}) \label{lem:hoa_pf_solve_n2}, \\
	\langle -f^{m+1}_{\uuu_{1}, 2}, v_N \rangle 
		&= \langle \frac{R_N\uu^{m+1}_{\Delta t, 2}-\uu^{m}_{\Delta t, 2}}{\Delta t}, v_N \rangle 
			+ \langle \nabla R_N\uu^{m+1}_{\Delta t, 2} , \nabla v_N \rangle 
				+ \langle \nabla \phi^{m}_{\Delta t, 2}, v_N \rangle \nonumber  \\
		&\quad + \langle (\uuu^{m}_{1} \cdot \nabla) R_N\uu^{m+1}_{\Delta t, 2} + (\uu^{m}_{\Delta t, 2} \cdot \nabla) R_N\uuu_{1}^{m+1}, v_N \rangle  \nonumber  \\
		&\quad + \langle \up_{1}^{m} \nabla (\frac{p^{m+1}_{\Delta t, 2}}{\up_{1}^{m+1}} + \psi^{m+1}_{\Delta t, 2})
							+ p^{m}_{\Delta t, 2} \nabla (\ln{\up_{1}^{m+1}} + \upsi_{1}^{m+1}), v_N \rangle \nonumber  \\
		&\quad + \langle \un_{1}^{m} \nabla (\frac{n^{m+1}_{\Delta t, 2}}{\un_{1}^{m+1}}-\psi^{m+1}_{\Delta t, 2})
							+ n^{m}_{\Delta t, 2} \nabla (\ln{\un_{1}^{m+1}}-\upsi_{1}^{m+1}), v_N \rangle \nonumber \\
		& \quad				+ \bigO(\Delta t) + \bigO(N^{-k}) \label{lem:hoa_pf_solve_u2}, \\
	\langle \nabla \psi^{m}_{\Delta t, 2}, \nabla v_N & \rangle 
		 = \langle p^{m}_{\Delta t, 2}-n^{m}_{\Delta t, 2}, v_N \rangle, \\
	\langle \uu^{m}_{\Delta t, 2}, \nabla v_N & \rangle = 0. 
\end{align}
Finally, a combination of \eqref{lem:hoa_o_p_1}-\eqref{lem:hoa_o_u_1} and \eqref{lem:hoa_pf_solve_p2}-\eqref{lem:hoa_pf_solve_u2} yields the third order temporal truncation error for $(\up, \un, \uuu, \uphi)$: 
\begin{align*}
	& \langle \frac{\up^{m+1}-\up^{m}}{\Delta t}, v_N \rangle-\langle \up^{m} \uuu^{m}, \nabla v_N \rangle + \langle \up^{m}(1 + 2\Delta t \up^{m}) \nabla \umu^{m+1}, \nabla v_N \rangle 
		= \tau^{m+1}_{p}(v_N)
		, \\
	& \langle \frac{\un^{m+1}-\un^{m}}{\Delta t}, v_N \rangle-\langle \un^{m} \uuu^{m}, \nabla v_N \rangle + \langle \un^{m}(1 + 2\Delta t n^{m}) \nabla \unu^{m+1}, \nabla v_N \rangle 
		= \tau^{m+1}_{n}(v_N)
		, \\
	& \langle \frac{R_N\uuu^{m+1}-\uuu^{m}}{\Delta t}, v_N \rangle +  \langle (\uuu^{m} \cdot \nabla)R_N\uuu^{m+1}, v_N \rangle + \langle \nabla R_N\uuu^{m+1}, \nabla v_N \rangle + \langle \nabla \uphi^{m}, v_N \rangle \nonumber \\
	& \quad + \langle \up^{m} \nabla \umu^{m+1} + \un^{m} \nabla \unu^{m+1}, v_N \rangle 
	  =	\tau^{m+1}_{\uu}(v_N),
\end{align*}
where 
\begin{equation*}
	\utau^{m+1}_{p}(v_N), \utau^{m+1}_{n}(v_N), \utau^{m+1}_{\uu}(v_N) \leq C(\Delta t^3 + N^{-k}) \|  v_N \| _{H^{1}}.
\end{equation*}
Since $(p_{\Delta t, 2}, n_{\Delta t, 2})$ are bounded, we may find $\Delta t, \frac{1}{N}$ so small that $\up, \un > \delta_0^{*} \triangleq \frac{\delta_0}{4} > 0$. Moreover, given the regularity of $(p_{
\Delta t, i}, n_{\Delta t, i}, \uu_{\Delta t, i})(i = 1, 2)$, we have 
\begin{equation*}
	(\up, \un, \uuu) \in L^{\infty}(0, T, W^{1, \infty}(\Omega)).
\end{equation*}
\end{proof}
\begin{remark}
	Since we set the initial data of our modified solution to be the same as the initial data of the exact solution, i.e.
	$(\up, \un, \uuu, \uphi)(\cdot, t = 0) = (\projN p, \projN n, \projN \uu, \projN \phi)(\cdot, t = 0)$,
	we will assume trivial initial data
	\begin{equation}
		(p_{\Delta t, i}, n_{\Delta t, i}, \uu_{\Delta t, i}, \phi_{\Delta t, i})(\cdot, t = 0) = {\bf 0},
	\end{equation} 
	for $i = 1,2$ in \eqref{lem:hoa_pf_cont_p1}-\eqref{lem:hoa_pf_cont_div1} and \eqref{lem:hoa_pf_cont_p2}-\eqref{lem:hoa_pf_cont_div2}.
\end{remark}

\bibliographystyle{plain}
\bibliography{ref.bib}

\begin{thebibliography}{10}

\bibitem{bazant2010induced}
Martin~Z Bazant and Todd~M Squires.
\newblock Induced-charge electrokinetic phenomena.
\newblock {\em Current Opinion in Colloid \& Interface Science},
  15(3):203--213, 2010.

\bibitem{chen2019positivity}
Wenbin Chen, Cheng Wang, Xiaoming Wang, and Steven~M Wise.
\newblock Positivity-preserving, energy stable numerical schemes for the
  cahn-hilliard equation with logarithmic potential.
\newblock {\em Journal of Computational Physics: X}, 3:100031, 2019.

\bibitem{cheng2022new2}
Qing Cheng and Jie Shen.
\newblock A new lagrange multiplier approach for constructing structure
  preserving schemes, {I}. positivity preserving.
\newblock {\em Computer Methods in Applied Mechanics and Engineering},
  391:114585, 2022.

\bibitem{cheng2022new}
Qing Cheng and Jie Shen.
\newblock A new lagrange multiplier approach for constructing structure
  preserving schemes, {II}. bound preserving.
\newblock {\em SIAM Journal on Numerical Analysis}, 60(3):970--998, 2022.

\bibitem{constantin2019nernst}
Peter Constantin and Mihaela Ignatova.
\newblock On the {N}ernst--{P}lanck--{N}avier--{S}tokes system.
\newblock {\em Archive for Rational Mechanics and Analysis}, 232(3):1379--1428,
  2019.

\bibitem{ding2019positivity}
Jie Ding, Zhongming Wang, and Shenggao Zhou.
\newblock Positivity preserving finite difference methods for
  poisson--nernst--planck equations with steric interactions: Application to
  slit-shaped nanopore conductance.
\newblock {\em Journal of Computational Physics}, 397:108864, 2019.

\bibitem{MR4253790}
Qiang Du, Lili Ju, Xiao Li, and Zhonghua Qiao.
\newblock Maximum bound principles for a class of semilinear parabolic
  equations and exponential time-differencing schemes.
\newblock {\em SIAM Review}, 63(2):317--359, 2021.

\bibitem{flavell2014conservative}
Allen Flavell, Michael Machen, Bob Eisenberg, Julienne Kabre, Chun Liu, and
  Xiaofan Li.
\newblock A conservative finite difference scheme for
  {P}oisson--{N}ernst--{P}lanck equations.
\newblock {\em Journal of Computational Electronics}, 13(1):235--249, 2014.

\bibitem{gong2021partial}
Huajun Gong, Changyou Wang, and Xiaotao Zhang.
\newblock Partial regularity of suitable weak solutions of the
  {N}avier--{S}tokes--{P}lanck--{N}ernst--{P}oisson equation.
\newblock {\em SIAM Journal on Mathematical Analysis}, 53(3):3306--3337, 2021.

\bibitem{guermond2006overview}
Jean-Luc Guermond, Peter Minev, and Jie Shen.
\newblock An overview of projection methods for incompressible flows.
\newblock {\em Computer Methods in Applied Mechanics and Engineering},
  195(44-47):6011--6045, 2006.

\bibitem{guermond1998stability}
Jean-Luc Guermond and Luigi Quartapelle.
\newblock On stability and convergence of projection methods based on pressure
  poisson equation.
\newblock {\em International Journal for Numerical Methods in Fluids},
  26(9):1039--1053, 1998.

\bibitem{guermond1998approximation}
Jean-Luc Guermond and Luigi Quartapelle.
\newblock On the approximation of the unsteady {N}avier--{S}tokes equations by
  finite element projection methods.
\newblock {\em Numerische mathematik}, 80(2):207--238, 1998.

\bibitem{hu2020fully}
Jingwei Hu and Xiaodong Huang.
\newblock A fully discrete positivity-preserving and energy-dissipative finite
  difference scheme for {P}oisson--{N}ernst--{P}lanck equations.
\newblock {\em Numerische Mathematik}, 145(1):77--115, 2020.

\bibitem{huang2021bound}
Fukeng Huang and Jie Shen.
\newblock Bound/positivity preserving and energy stable {SAV} schemes for
  dissipative systems: Applications to {K}eller-{S}egel and
  {P}oisson-{N}ernst-{P}lanck equations.
\newblock {\em SIAM Journal on Scientific Computing}, 43(3), 2021.

\bibitem{MR4186541}
Buyang Li, Jiang Yang, and Zhi Zhou.
\newblock Arbitrarily high-order exponential cut-off methods for preserving
  maximum principle of parabolic equations.
\newblock {\em SIAM Journal on Scientific Computing}, 42(6):A3957--A3978, 2020.

\bibitem{MR3880256}
Hao Li, Shusen Xie, and Xiangxiong Zhang.
\newblock A high order accurate bound-preserving compact finite difference
  scheme for scalar convection diffusion equations.
\newblock {\em SIAM Journal on Numerical Analysis}, 56(6):3308--3345, 2018.

\bibitem{MR4107225}
Hao Li and Xiangxiong Zhang.
\newblock On the monotonicity and discrete maximum principle of the finite
  difference implementation of {$C^0$}-{$Q^2$} finite element method.
\newblock {\em Numerische Mathematik}, 145(2):437--472, 2020.

\bibitem{liu2021positivity}
Chun Liu, Cheng Wang, Steven Wise, Xingye Yue, and Shenggao Zhou.
\newblock A positivity-preserving, energy stable and convergent numerical
  scheme for the {P}oisson-{N}ernst-{P}lanck system.
\newblock {\em Mathematics of Computation}, 90(331):2071--2106, 2021.

\bibitem{liu2023second}
Chun Liu, Cheng Wang, Steven~M Wise, Xingye Yue, and Shenggao Zhou.
\newblock A second order accurate, positivity preserving numerical method for
  the poisson--nernst--planck system and its convergence analysis.
\newblock {\em Journal of scientific computing}, 97(1):23, 2023.

\bibitem{liu2021efficient}
Hailiang Liu and Wumaier Maimaitiyiming.
\newblock Efficient, positive, and energy stable schemes for multi-d
  {P}oisson--{N}ernst--{P}lanck systems.
\newblock {\em Journal of Scientific Computing}, 87(3):1--36, 2021.

\bibitem{liu2018positivity}
Jian-Guo Liu, Li~Wang, and Zhennan Zhou.
\newblock Positivity-preserving and asymptotic preserving method for 2d
  keller-segal equations.
\newblock {\em Mathematics of Computation}, 87(311):1165--1189, 2018.

\bibitem{liu2017efficient}
Xiaoling Liu and Chuanju Xu.
\newblock Efficient time-stepping/spectral methods for the
  {N}avier-{S}tokes-{N}ernst-{P}lanck-{P}oisson equations.
\newblock {\em Communications in Computational Physics}, 21(5):1408--1428,
  2017.

\bibitem{prohl2009convergent}
Andreas Prohl and Markus Schmuck.
\newblock Convergent discretizations for the {N}ernst--{P}lanck--{P}oisson
  system.
\newblock {\em Numerische Mathematik}, 111(4):591--630, 2009.

\bibitem{prohl2010convergent}
Andreas Prohl and Markus Schmuck.
\newblock Convergent finite element discretizations of the
  {N}avier-{S}tokes-{N}ernst-{P}lanck-{P}oisson system.
\newblock {\em ESAIM: Mathematical Modelling and Numerical Analysis},
  44(3):531--571, 2010.

\bibitem{qiao2023structure}
Zhonghua Qiao, Zhenli Xu, Qian Yin, and Shenggao Zhou.
\newblock Structure-preserving numerical method for maxwell-amp{\`e}re
  nernst-planck model.
\newblock {\em Journal of Computational Physics}, 475:111845, 2023.

\bibitem{rubinstein1990electro}
Isaak Rubinstein.
\newblock {\em Electro-diffusion of ions}.
\newblock SIAM, 1990.

\bibitem{schmuck2009analysis}
Markus Schmuck.
\newblock Analysis of the {N}avier--{S}tokes--{N}ernst--{P}lanck--{P}oisson
  system.
\newblock {\em Mathematical Models and Methods in Applied Sciences},
  19(06):993--1014, 2009.

\bibitem{schmuck2011modeling}
Markus Schmuck.
\newblock Modeling and deriving porous media
  {S}tokes-{P}oisson-{N}ernst-{P}lanck equations by a multi-scale approach.
\newblock {\em Communications in Mathematical Sciences}, 9(3):685--710, 2011.

\bibitem{shen1996error}
Jie Shen.
\newblock On error estimates of the projection methods for the navier-stokes
  equations: second-order schemes.
\newblock {\em Mathematics of computation}, 65(215):1039--1065, 1996.

\bibitem{shen2016maximum}
Jie Shen, Tao Tang, and Jiang Yang.
\newblock On the maximum principle preserving schemes for the generalized
  allen--cahn equation.
\newblock {\em Communications in Mathematical Sciences}, 14(6):1517--1534,
  2016.

\bibitem{shen2021unconditionally}
Jie Shen and Jie Xu.
\newblock Unconditionally positivity preserving and energy dissipative schemes
  for {P}oisson--{N}ernst--{P}lanck equations.
\newblock {\em Numerische Mathematik}, 148(3):671--697, 2021.

\bibitem{shen2015decoupled}
Jie Shen and Xiaofeng Yang.
\newblock Decoupled, energy stable schemes for phase-field models of two-phase
  incompressible flows.
\newblock {\em SIAM Journal on Numerical Analysis}, 53(1):279--296, 2015.

\bibitem{van2019positivity}
Jaap~JW van~der Vegt, Yinhua Xia, and Yan Xu.
\newblock Positivity preserving limiters for time-implicit higher order
  accurate discontinuous galerkin discretizations.
\newblock {\em SIAM Journal on scientific computing}, 41(3):A2037--A2063, 2019.

\bibitem{zhornitskaya1999positivity}
Liya Zhornitskaya and Andrea~L Bertozzi.
\newblock Positivity-preserving numerical schemes for lubrication-type
  equations.
\newblock {\em SIAM Journal on Numerical Analysis}, 37(2):523--555, 1999.

\end{thebibliography}

\end{document}